\documentclass[11pt]{amsart}

\usepackage[T1]{fontenc}
\usepackage[utf8]{inputenc}
\usepackage{amsmath,amssymb,amsthm,mathtools}
\usepackage{enumitem}
\usepackage{microtype}
\usepackage{graphicx}
\usepackage[colorlinks=true,linkcolor=blue,citecolor=blue,urlcolor=blue]{hyperref}

\newtheorem{theorem}{Theorem}[section]
\newtheorem{corollary}[theorem]{Corollary}
\newtheorem{lemma}[theorem]{Lemma}
\newtheorem{proposition}[theorem]{Proposition}

\newtheorem{remark}[theorem]{Remark}

\newcommand{\E}{\mathbb E}
\newcommand{\R}{\mathbb R}
\newcommand{\Var}{\operatorname{Var}}
\newcommand{\Inf}{\operatorname{Inf}}
\newcommand{\1}{\mathbf 1}

\newcommand{\ip}[2]{\langle #1,#2\rangle}
\newcommand{\Tcol}{\mathsf T_{\rm col}}
\newcommand{\Ccol}{\mathsf T_{\rm col}}
\newcommand{\Tfrac}{\mathsf T_{\rm frac}}
\newcommand{\Ttheta}{\mathsf T_{\theta}}
\newcommand{\B}{\mathsf B}
\newcommand{\divg}{\operatorname{div}}
\DeclareMathOperator{\dist}{dist}
\newcommand{\dd}{\textnormal{d}}
\newcommand{\A}{A}
\newcommand{\V}{V}
\newcommand{\Stab}{\operatorname{Stab}}

\title[A Beckmann boundary form of Talagrand's conjecture]{A Beckmann boundary form of Talagrand's conjecture on the discrete cube}

\author[P. Ivanisvili]{Paata Ivanisvili}
\address{Department of Mathematics, University of California, Irvine, CA 92697, USA}
\email{pivanisv@uci.edu}

\author[X. Xie]{Xinyuan Xie}
\address{Department of Mathematics, University of California, Irvine, CA 92697, USA}
\email{xinyuax7@uci.edu}

\author[H. Zhang]{Haonan Zhang}
\address{Department of Mathematics, University of South Carolina, Columbia, SC 29208, USA}
\email{haonanzhangmath@gmail.com}

\date{\today}

\begin{document}

\begin{abstract}
We introduce the Beckmann boundary of a Boolean function
\[
   \mathsf{B}(f)=\inf_{\operatorname{div} V=Lf}\mathbb E\|V(x)\|_2.
\]
Here
\[
   L=\sum_iD_i,\qquad  D_i f(x)=\frac{f(x)-f(x^{\oplus i})}{2},
\]
and $\operatorname{div} V(x)=\sum_i  (V_{i}(x)-V_{i}(x^{\oplus i}))$.
This nonlocal quantity is no larger than the usual two-sided, one-sided,
colored, optimized colored, or optimized fractional colored boundaries.
Nevertheless, every nonconstant Boolean $f$ satisfies
\[
   \mathsf{B}(f)\gtrsim \operatorname{Var}(f)
   \sqrt{\log\!\left(1+\frac{1}{\sum_i\operatorname{Inf}_i(f)^2}\right)}.
\]
We also prove strong one-sided fractional spectral estimates.  If
$A\subset\{-1,1\}^n$ and
\[
   h_{A}(x)=\#\{i:x\in A,\ x^{\oplus i}\notin A\},
\]
then, for $0<\alpha<1$,
\[
   \sum_{S\ne\varnothing}|S|^\alpha\widehat{\mathbf 1_{A}}(S)^2
   \lesssim_\alpha \mathbb E\omega_\alpha(h_{A}),
\]
where $\omega_\alpha(m)=\sqrt m$ for $\alpha<1/2$,
$\omega_{1/2}(m)=\sqrt m\log(e+m)$, and
$\omega_\alpha(m)=m^\alpha$ for $\alpha>1/2$.
These profiles are sharp, up to $\alpha$-dependent constants, for majority.
We also show that the comparison is genuinely nonreversible: an explicit
quotient-cube family makes the optimized fractional, and hence optimized
colored, boundary exceed $\mathsf{B}$ by a factor $\gtrsim\sqrt{\log n}$.  We
further obtain a driftless Bernstein-multiplier inequality.
\end{abstract}

\maketitle

\section{Introduction}

Let $\Omega_n=\{-1,1\}^n$ with the uniform probability measure.  For a Boolean function
$f:\Omega_n\to\{-1,1\}$, put $\A:=\{x:f(x)=1\}$.  Its one-sided boundary degree is
\[
   h_{\A}(x):=\#\{i:x\in\A,\ x^{\oplus i}\notin\A\},
\]
where $x^{\oplus i}$ denotes the point obtained by flipping the $i$th coordinate of $x$.  The full, two-sided sensitivity is
\[
   s_f(x)=\#\{i\in[n]:f(x)\ne f(x^{\oplus i})\},
   \qquad [n]:=\{1,\dots,n\}.
\]
Talagrand's classical theorem, in its original one-sided form, gives \cite{Talagrand1993}
\[
   \E\sqrt{h_{\A}(x)}
   \gtrsim \Var(f)\sqrt{\log(e/\Var(f))}.
\]
Talagrand later asked for a strengthening that also captures the KKL theorem, again formulated using the one-sided boundary \cite{KKL,Talagrand1997}.  In the uniform setting and normalization considered here, the conjectured estimate is
\begin{equation}\label{eq:Talagrand-intro}
   \E\sqrt{h_{\A}(x)}
   \ge c\,\Var(f)
   \sqrt{\log\!\left(1+\frac{1}{\sum_{i=1}^n\Inf_i(f)^2}\right)} .
\end{equation}
Here $\Inf_i(f)=\Pr[f(x)\ne f(x^{\oplus i})]$ is the influence of the $i$th variable for $f$. 

Eldan and Gross proved the related two-sided inequality \cite[Theorem~1.4]{EldanGross}
\begin{equation}\label{eq:EG-intro}
   \E\sqrt{s_f(x)}
   \ge c\,\Var(f)
   \sqrt{\log\!\left(1+\frac{1}{\sum_{i=1}^n\Inf_i(f)^2}\right)} .
\end{equation}
Indeed,
\[
   s_f=h_{\A}+h_{\A^c},
   \qquad
   \sqrt{s_f}=\sqrt{h_{\A}}+\sqrt{h_{\A^c}},
\]
because $h_{\A}$ and $h_{\A^c}$ have disjoint supports.  Consequently, Talagrand's one-sided conjecture implies \eqref{eq:EG-intro}, but the converse does not follow.  The two quantities can differ by an unbounded factor: if
$f=2\1_{\{(1,\ldots,1)\}}-1$, then
\[
   \E\sqrt{h_{\A}}=\frac{\sqrt n}{2^n},
   \qquad
   \E\sqrt{s_f}=\frac{n+\sqrt n}{2^n}\geq \sqrt{n}\; \E\sqrt{h_{\A}}.
\]
This distinction is also emphasized in \cite{IvanisviliZhang}.  The stochastic argument in \cite{EldanGross} controls the full squared gradient, or equivalently the two-sided sensitivity, through quadratic variation.  In particular, the one-sided estimate \eqref{eq:Talagrand-intro} does not follow from their published argument as written.  Several alternative proofs of the two-sided inequality \eqref{eq:EG-intro} are now known; see van Handel's argument as recorded in \cite{RosenthalVanHandel}, \cite[Remark~2]{BIM}, and \cite{IvanisviliZhang}.  The last reference also treats biased discrete cubes and Markov spaces with positive Bakry--\'Emery curvature.

Talagrand's one-sided conjecture follows from the robust colored theorem of Eldan--Kindler--Lifshitz--Minzer \cite[Theorems~1.2 and~1.3, and Section~3.4]{EKLM}.  Color every sensitive edge red or blue and write $\chi$ for the coloring.  Define $s_{f,{\rm red}}(x)$ as the number of red sensitive edges adjacent to $x$ when $f(x)=1$, and set $s_{f,{\rm red}}(x)=0$ when $f(x)=-1$.  Similarly, define $s_{f,{\rm blue}}(x)$ as the number of blue sensitive edges adjacent to $x$ when $f(x)=-1$, and set $s_{f,{\rm blue}}(x)=0$ when $f(x)=1$.  The associated colored boundary is
\[
   \Tcol^\chi(f)=
   \E\sqrt{s_{f,{\rm red}}(x)}+
   \E\sqrt{s_{f,{\rm blue}}(x)}.
\]
The all-red coloring gives $\Tcol^\chi(f)=\E\sqrt{h_{\A}}$, while the all-blue coloring gives $\Tcol^\chi(f)=\E\sqrt{h_{\A^c}}$.  We write
\[
   \Ccol(f)=\inf_\chi \Tcol^\chi(f)
\]
for the optimized colored boundary.  Eldan--Kindler--Lifshitz--Minzer proved the Talagrand lower bound for $\Tcol^\chi(f)$ for every coloring $\chi$, and hence also for $\Ccol(f)$.  Taking the all-red coloring yields \eqref{eq:Talagrand-intro}; thus their colored theorem directly settles Talagrand's original one-sided formulation and, in particular, implies the two-sided inequality \eqref{eq:EG-intro}.

The main point of this paper is that the semigroup method proves a still stronger statement.  The colored quantity is edge-local: every sensitive edge is assigned to one of its two endpoints.  We instead allow the boundary charge to flow through the whole cube.  Define a vector field
$\V=(\V_1,\ldots,\V_n)$ on the cube and set
\[
   \divg \V(x)=\sum_{i=1}^n(\V_i(x)-\V_i(x^{\oplus i})).
\]
With
\[
   D_i g(x)=\frac{g(x)-g(x^{\oplus i})}{2},
   \qquad L=\sum_iD_i,
\]
the Beckmann boundary is
\begin{equation}\label{eq:B-intro}
   \B(f)=\inf\left\{
   \E\|\V(x)\|_2:\ \divg \V=Lf
   \right\}.
\end{equation}
The terminology is motivated by the Beckmann formulation of optimal transport \cite{Beckmann}: one minimizes an $L^1$ norm of a vector field subject to a prescribed divergence.  Here the local norm is the Euclidean norm in the coordinate directions of the cube.

This functional is no larger than any colored boundary.  Indeed, a red edge is represented by putting unit field at the $f=1$ endpoint, while a blue edge is represented by putting unit field with the opposite sign at the $f=-1$ endpoint.  It is also no larger than the optimized {\em fractional} colored boundary $\Tfrac(f)$ (see the next section for the definition), where each sensitive edge may be split fractionally between its two endpoints.  More precisely,
\[
   \B(f)\le \Tfrac(f)\le \Ccol(f)\le \Tcol^\chi(f)
\]
for every coloring $\chi$.  Thus a lower bound for $\B(f)$ implies all of the colored, fractional colored, one-sided and two-sided inequalities.

Our main theorem is the following.

\begin{theorem}[Beckmann--Talagrand inequality]\label{thm:main}
There is a universal constant $c>0$ such that for every $n$ and every nonconstant Boolean function $f:\{-1,1\}^n\to\{-1,1\}$,
\begin{equation}\label{eq:main}
   \B(f)
   \ge
   c\,\Var(f)
   \sqrt{\log\!\left(1+\frac{1}{M(f)}\right)},
   \qquad
   M(f)=\sum_{i=1}^n\Inf_i(f)^2 .
\end{equation}
\end{theorem}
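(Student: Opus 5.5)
The plan is to prove \eqref{eq:main} through convex duality for the Beckmann problem, and then feed the dual problem a semigroup test function. For any $g:\Omega_n\to\R$ and any admissible $\V$ with $\divg\V=Lf$, a summation by parts (the map $x\mapsto x^{\oplus i}$ preserves the uniform measure) gives
\[
   \E[g\,Lf]=\E[g\,\divg\V]=\sum_i\E\bigl[\V_i(x)\,(g(x)-g(x^{\oplus i}))\bigr]=2\,\E\ip{\V}{Dg},
\]
where $Dg=(D_1g,\dots,D_ng)$. Hence
\[
   \B(f)\ \ge\ \tfrac12\sup\Bigl\{\E[g\,Lf]:\ \|Dg(x)\|_2\le 1\ \text{for all }x\Bigr\}.
\]
This is really an equality by finite-dimensional LP/SOCP duality, but we only need the inequality. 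In Fourier terms, $\E[g\,Lf]=\sum_S|S|\,\widehat g(S)\widehat f(S)$. Therefore it suffices to exhibit a $1$-Lipschitz test function $g$, in the pointwise $\ell^2$ sense, that correlates with $Lf$ at level $\Var(f)\sqrt{\log(1+1/M(f))}$.

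The natural candidate is $g=c\,\Psi(P_tf)$, with $P_t=e^{-tL}$ the heat semigroup. The simplest version is $g=c\sqrt t\,P_tf$. This rests on the reverse-Poincaré (Bakry--\'Emery, curvature $1$) pointwise bound: for $|f|\le1$,
\[
   \|DP_tf(x)\|_2\lesssim \frac{1}{\sqrt{t}}\qquad (0<t\le1).
\]
With this choice,
\[
   \E[g\,Lf]\asymp \sqrt t\sum_S|S|e^{-t|S|}\widehat f(S)^2 .
\]
If a constant fraction of $\Var(f)$ sits on levels $|S|\in[k,2k]$ and we take $t=1/k$, we get $\gtrsim\sqrt k\,\Var(f)$. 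So the goal reduces to a spectral statement: Fourier mass of order $\Var(f)$ at levels $k\gtrsim\log(1+1/M)$. The low-level part is controlled by the KKL/Talagrand level inequality obtained from hypercontractivity. Applying $\|D_if\|_{4/3}^2=\Inf_i^{3/2}$ (up to constants), one gets
\[
   \sum_{1\le|S|\le k}\widehat f(S)^2\le 3^k\sum_i\Inf_i(f)^{3/2}.
\]
In the cases where the total influence is small compared with $\Var(f)\log(1/M)$, this forces most of the variance above level $c\log(1/M)$. When the total influence is large, we will instead use a two-case argument via Talagrand's $\sum_i\Inf_i/\log(1/\Inf_i)$ inequality. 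A more robust alternative is to replace the lower-bound half of this step by the Eldan--Gross/van Handel route, run inside the dual: choose $g$ adapted to the sharper gradient estimate
\[
   \|DP_tf\|_2\lesssim \frac{1}{\sqrt t}\,\mathcal I(P_tf),\qquad \mathcal I(u)\asymp(1-u^2)\sqrt{\log\tfrac{e}{1-u^2}},
\]
and take $\Psi'\approx 1/\mathcal I$. This makes $g=\sqrt t\,\Psi(P_tf)$ Lipschitz while gaining the $\sqrt{\log}$ factor through the chain rule.

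The main obstacle is the mismatch between where the spectral mass lives and the single damping $e^{-t|S|}$. If the mass above level $\log(1/M)$ is spread over many dyadic blocks, no single $t$ captures it, and a naive dyadic pigeonhole loses a logarithm. My first attempt will be to average the test functions: take $g=\int w(t)\sqrt t\,P_tf\,dt$ with $\int w=1$. The gradient bound is preserved by convexity, since each $\sqrt t\,P_tf$ is $O(1)$-Lipschitz. The correlation then becomes
\[
   \sum_S\Bigl(\int w(t)\sqrt t\,|S|e^{-t|S|}\,dt\Bigr)\widehat f(S)^2,
\]
and a weight $w$ spread uniformly over $t\in[e^{-j},e^{-j+1}]$ for suitable $j$ will give a multiplier $\gtrsim\sqrt{|S|}$ uniformly on a range of levels. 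If that still loses a factor, I would fall back on the nonlinear choice $\Psi(P_tf)$ above and prove $\E[\Psi(P_tf)\,Lf]\gtrsim\Var(f)\sqrt{t\log(1/M)}$. The proof of that estimate would differentiate along the semigroup, use $\E[\Psi(P_tf)Lf]=\E[\Psi'(P_tf)\,\cdots]$-type identities together with the level-inequality input, and take $t\asymp1$.
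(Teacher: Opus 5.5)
\textbf{High-degree step.} Your duality step is correct, and with the semigroup test function it already gives the paper's high-degree estimate $\B(f)\gtrsim\sqrt d\,W_{\ge d}[f]$ (Corollary~\ref{cor:high-degree}). The ``main obstacle'' you describe does not exist. Let $m_j$ be the Fourier mass on levels $[2^jd,2^{j+1}d)$. Since $\sum_{j\ge0}2^{-j/2}<4$, there is some $j$ with $2^{j/2}m_j\ge \frac14 W_{\ge d}[f]$. The single choice $t=2^{-j}/d$ in $g=\sqrt{2t}\,P_tf$ then gives $\B(f)\gtrsim\sqrt{2^jd}\,m_j\gtrsim\sqrt d\,W_{\ge d}[f]$, with no logarithmic loss.

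Your proposed uniform weight on one $e$-adic $t$-interval adds nothing over a single $t$. The average that captures all levels $\ge d$ at once is $w(s)\propto s^{-1/2}$ on $(0,1/d]$, that is, $g=\frac1{2\sqrt t}\int_0^tP_sf\,\dd s$ with multiplier $(1-e^{-t|S|})/(2\sqrt t)$. This is exactly the paper's Beckmann--Poincar\'e inequality read in dual form.

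\textbf{The gap: the spectral input.} You need one of two things. Either $\log(1/M)\lesssim\log(e/\Var f)$, in which case Lemma~\ref{lem:low-degree-var} suffices, or else $W_{\le c\log(1/M)}[f]\le\frac12\Var(f)$. Your level inequality only gives
\[
W_{\le k}[f]\le 3^k\sum_i\Inf_i(f)^{3/2}\le 3^k\sqrt{I(f)M(f)},\qquad I(f)=\sum_i\Inf_i(f).
\]
This is useless once the total influence exceeds a small power of $1/M$.

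That is not an exceptional case. Talagrand's inequality $\Var(f)\lesssim\sum_i\Inf_i/\log(e/\Inf_i)$, which you propose to use there, combined with $\Inf_i\le\sqrt M$, gives $I(f)\gtrsim\Var(f)\log(1/M)$. That is a lower bound on $I(f)$, and it says nothing about where the Fourier mass sits. Moreover, $I(f)$ can be an arbitrary power of $1/M$ with $M$ small, for example for products of many independent Tribes functions.

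The missing ingredient is precisely the Keller--Kindler theorem, Theorem~\ref{thm:KK}: $W_{\le a_1\log(1/M)}[f]\le M^{a_2}$. This is a dimension-free bound in terms of the $\ell^2$ influence sum, and it does not follow from applying $(4/3\to2)$-hypercontractivity to each $D_if$.

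\textbf{The nonlinear fallback does not fill the gap.} The isoperimetric-profile gradient bound behind $g=\sqrt t\,\Psi(P_tf)$ is what produces Talagrand's $\sqrt{\log(e/\Var f)}$ gain, and no influence-dependent quantity enters it. You say yourself that you would still need ``the level-inequality input'' to reach $\log(1/M)$, so the problem is only relocated. In addition, on the cube the chain rule for $\Psi$ holds only up to error terms that you would have to control.

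\textbf{How it closes.} Once Keller--Kindler is available, your argument closes exactly as in the paper. Split into the cases $M^{a_2}>\frac12\Var(f)$, where Lemma~\ref{lem:low-degree-var} applies, and $M^{a_2}\le\frac12\Var(f)$, where the high-degree estimate at $d\asymp\log(1/M)$ applies.
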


The proof is short.  For any admissible $\V$ satisfying $\divg \V=Lf$, the heat semigroup identity gives
\[
   \ip{f}{(I-P_t)f}
   =\int_0^t\ip{Lf}{P_s f}\,\dd s
   =\int_0^t\ip{\divg \V}{P_s f}\,\dd s.
\]
Integration by parts on the cube turns the last inner product into
\[
   2\sum_i\ip{\V_i}{D_iP_s f}.
\]
Cauchy--Schwarz and the standard pointwise reverse Poincar\'e estimate, which follows from the heat-kernel identity \cite[Lemma~2.1]{IVHV},
\[
   \|DP_s f(x)\|_2\le \frac{\|f\|_\infty}{\sqrt{e^{2s}-1}}
\]
then imply
\begin{equation}\label{eq:intro-Beckmann-Poincar\'e}
   \ip{f}{(I-P_t)f}
   \le
   2\arctan\sqrt{e^{2t}-1}\,\E\|\V\|_2 .
\end{equation}
Taking the infimum over $\V$ and setting $t=1/d$ yields
\[
   \B(f)\gtrsim \sqrt d\,W_{\ge d}[f]
\]
with details presented in Section~\ref{sec:Poincar\'e}. The theorem follows by combining this high-degree estimate with the usual hypercontractive low-degree estimate \cite{Bonami,Beckner} and the Keller--Kindler Fourier-tail theorem \cite{KellerKindler}; see also \cite[Theorem~3.4]{EKLM} for the precise form used here.  The proof of the main theorem is completed in Section~\ref{sec:main-proof}. In Section~\ref{sec:Poincar\'e}, we also prove that  $\B(f) \leq \Tfrac(f)\lesssim \B(f)\log(en)$.

The beckman boundary is a genuine strengthening of the colored theorem of Eldan--Kindler--Lifshitz--Minzer \cite[Section~3.4]{EKLM}.  The quantity $\B(f)$ may use nonsensitive edges and global cancellations, whereas colorings and fractional colorings are supported on the sensitivity graph.  In Section \ref{sec:examples} we give explicit examples showing that the reverse comparisons fail and summarize the comparisons.  A singleton shows that a fixed coloring, a fixed fractional coloring, or a one-sided boundary can exceed $\B(f)$ by a factor of $\sqrt n$.  More decisively, an explicit quotient-cube family satisfies
\[
   \frac{\Tfrac(f)}{\B(f)}\gtrsim \sqrt{\log n}
   \qquad\text{and hence}\qquad
   \frac{\Ccol(f)}{\B(f)}\gtrsim \sqrt{\log n},
\]
so the universal comparison from $\B$ to optimized fractional or integral colorings has no dimension-free converse.  This quantitatively confirms that Theorem \ref{thm:main} is strictly stronger than the optimized colored-boundary theorem.

The second part of the paper uses more of the heat-kernel identity of Ivanisvili--van Handel--Volberg \cite[Lemma~2.1]{IVHV}.  The Beckmann--Poincar\'e argument above only uses the rough estimate $\|DP_tf\|_2\lesssim t^{-1/2}$.  If one keeps the exact biased Rademacher variables that appear when differentiating the heat kernel, one obtains strong spectral estimates.  The most transparent endpoint is the following inequality: for every $\A\subset\Omega_n$,
\begin{equation}\label{eq:intro-strong-endpoint}
   \sum_{S\ne\varnothing}\sqrt{|S|}\,\widehat{\1_{\A}}(S)^2
   \lesssim
   \E\big[\sqrt{h_{\A}(x)}\log(e+h_{\A}(x))\big].
\end{equation}
The left-hand side is the strong half-moment of the Fourier spectral degree.  It is stronger than the weak quantity $\sup_d\sqrt d\,W_{\ge d}[\1_{\A}]$: the weak estimate sees the largest Fourier scale, while \eqref{eq:intro-strong-endpoint} sums over all scales.  Equivalently, \eqref{eq:intro-strong-endpoint} is an integrated noise-sensitivity estimate for $\1_{\A}$.

The logarithm in \eqref{eq:intro-strong-endpoint} is a local endpoint correction.  It should be distinguished from the global $L_2/L_1$ logarithms in Talagrand's influence inequality \cite{Talagrand1994}, as revisited in \cite{CorderoEskenazis}.  It is not an artifact of the proof.  For majority, the one-sided boundary is supported on the critical layer; there $h_{\A}\asymp n$ and the layer has measure $\asymp n^{-1/2}$, so the right-hand side of \eqref{eq:intro-strong-endpoint} is $\asymp\log n$.  The standard Fourier asymptotics of majority \cite[Section~5.3]{ODonnell} give $W_{=k}(\mathrm{Maj}_n)\asymp k^{-3/2}$ over the relevant odd levels, and therefore the left-hand side is also $\asymp\log n$.  Thus the endpoint is sharp up to constants.

More generally, we prove a complete first-order spectral moment scale.  For $0<\alpha<1$,
\begin{equation}\label{eq:intro-alpha-scale}
   \sum_{S\ne\varnothing}|S|^\alpha\widehat{\1_{\A}}(S)^2
   \lesssim_\alpha
   \E\omega_\alpha(h_{\A}(x)),
\end{equation}
where
\[
   \omega_\alpha(m)=
   \begin{cases}
      \sqrt m, & 0<\alpha<1/2,\\
      \sqrt m\log(e+m), & \alpha=1/2,\\
      m^\alpha, & 1/2<\alpha<1.
   \end{cases}
\]

\subsection*{Acknowledgments}
P.I. acknowledges partial support from NSF CAREER grant DMS-2152401, NSF grant DMS-2554183, a Simons Fellowship, and a Humboldt Research Fellowship for Experienced Researchers.  H.Z. is supported by NSF DMS-2453408. The authors acknowledge the use of AI tools. All mathematical arguments and proofs in the final manuscript were checked and written by the authors.

\section{Notation and edge-local boundary functionals}\label{sec:notation}

Throughout the paper, $\Omega_n=\{-1,1\}^n$ is equipped with the uniform probability measure, and
\[
   \ip{g}{h}:=\E[g(x)h(x)].
\]
For $x\in\Omega_n$, $x^{\oplus i}$ denotes the point obtained by flipping the $i$th coordinate.  Let
\[
   D_i g(x)=\frac{g(x)-g(x^{\oplus i})}{2},
   \qquad
   Lg=\sum_{i=1}^nD_i g.
\]
Then $D_i$ is a self-adjoint projection on $L^2(\Omega_n)$ and the Walsh character
$\chi_S(x)=\prod_{i\in S}x_i$ satisfies
\[
   D_i\chi_S=\1_{i\in S}\chi_S,
   \qquad
   L\chi_S=|S|\chi_S.
\]
The heat semigroup is $P_t=e^{-tL}$, so
\[
   P_t\chi_S=e^{-t|S|}\chi_S.
\]
For any function $\phi$ on $\{0,1,\ldots,n\}$, the spectral multiplier $\phi(L)$ is defined by
\[
   \phi(L)\chi_S=\phi(|S|)\chi_S.
\]
For $f=\sum_S\widehat f(S)\chi_S$, put
\[
\begin{aligned}
   W_{=d}[f]&=\sum_{|S|=d}\widehat f(S)^2,
   &\qquad
   W_{\ge d}[f]&=\sum_{|S|\ge d}\widehat f(S)^2,\\
   W_{>d}[f]&=\sum_{|S|>d}\widehat f(S)^2,
   &\qquad
   W_{\le d}[f]&=\sum_{1\le |S|\le d}\widehat f(S)^2.
\end{aligned}
\]
Thus $W_{\le d}$ always denotes the nonconstant Fourier mass up to degree $d$, and
\[
   \Var(f)=\sum_{S\ne\varnothing}\widehat f(S)^2.
\]
We write $a\lesssim b$ if $a\le Cb$ for a universal constant $C$, and $a\gtrsim b$ if $b\lesssim a$.

For Boolean $f:\Omega_n\to\{-1,1\}$, define
\[
   \Inf_i(f)=\E|D_i f|=\Pr[f(x)\ne f(x^{\oplus i})].
\]
The equality follows from our normalization: on a sensitive edge, $|D_i f|=1$.

\subsection{Colored and fractional boundaries}
Let $E_f$ be the set of sensitive edges of $f$.  A red--blue coloring $\chi$ of $E_f$ defines
\[
   s_{f,{\rm red}}(x)
   =\#\{i: f(x)=1,\ f(x^{\oplus i})=-1,
       \{x,x^{\oplus i}\}\text{ is red}\},
\]
\[
   s_{f,{\rm blue}}(x)
   =\#\{i: f(x)=-1,\ f(x^{\oplus i})=1,
       \{x,x^{\oplus i}\}\text{ is blue}\}.
\]
Then
\[
   \Tcol^\chi(f)=
   \E\sqrt{s_{f,{\rm red}}(x)}+\E\sqrt{s_{f,{\rm blue}}(x)}.
\]
The all-red coloring is the one-sided boundary of $\A=\{f=1\}$, and the all-blue coloring is the one-sided boundary of $\A^c$.  The optimized colored boundary is
\[
   \Ccol(f)=\inf_\chi \Tcol^\chi(f),
\]
where the infimum ranges over all red--blue colorings of $E_f$.

One may wonder what happens if the coloring assignment were allowed to be fractional. This motivates the following definition. A fractional coloring \(\theta:E_f\to[0,1]\) assigns a number $\theta_e\in[0,1]$ to each sensitive edge $e$.  If $e=\{x,x^{\oplus i}\}$ and $f(x)=1$, $f(x^{\oplus i})=-1$, then $\theta_e$ is the fraction assigned to the positive endpoint and $1-\theta_e$ is the fraction assigned to the negative endpoint.  Define
\[
   \Ttheta(f)=\E_x\left(
      \sum_{\substack{i:\, f(x)=1\\ f(x^{\oplus i})=-1}}
          \theta_{\{x,x^{\oplus i}\}}^2
      +
      \sum_{\substack{i:\, f(x)=-1\\ f(x^{\oplus i})=1}}
          (1-\theta_{\{x,x^{\oplus i}\}})^2
   \right)^{1/2}.
\]
The optimized fractional colored boundary is
\[
   \Tfrac(f)=\inf_{\theta:E_f\to[0,1]}\Ttheta(f).
\]

\begin{proposition}[Fractional and integral colorings are comparable]
\label{prop:fractional-integral-comparable}
For every Boolean function \(f:\Omega_n\to\{-1,1\}\),
\[
   \Tfrac(f) \leq \Ccol(f)\le \sqrt{2}\,\Tfrac(f).
\]
\end{proposition}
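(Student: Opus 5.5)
The first inequality is immediate, since every integral coloring is a fractional coloring. A red edge corresponds to $\theta_e=1$ and a blue edge to $\theta_e=0$. With these values, the sum under the square root in $\Ttheta(f)$ at $x$ is exactly $s_{f,{\rm red}}(x)$ when $f(x)=1$ and $s_{f,{\rm blue}}(x)$ when $f(x)=-1$. Only one of these is nonzero at each point, so $\Ttheta(f)=\Tcol^\chi(f)$. Taking the infimum over $\chi$ gives $\Tfrac(f)\le\Ccol(f)$.

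For the reverse inequality I would use threshold rounding at $1/2$. Fix a fractional coloring $\theta$ and define $\chi$ by coloring $e$ red if $\theta_e\ge 1/2$ and blue otherwise. Take a point $x$ with $f(x)=1$. Then $s_{f,{\rm red}}(x)$ counts the sensitive edges at $x$ with $\theta_e\ge1/2$, and each such edge satisfies $1\le 4\theta_e^2$. Hence
\[
s_{f,{\rm red}}(x)\le 4\sum_{i}\theta_{\{x,x^{\oplus i}\}}^2 .
\]
The symmetric bound holds at points with $f(x)=-1$, using $1-\theta_e>1/2$ for blue edges. Taking square roots and expectations gives $\Tcol^\chi(f)\le 2\Ttheta(f)$, and so $\Ccol(f)\le 2\Tfrac(f)$.

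This naive rounding loses a factor $2$ rather than $\sqrt2$, and closing that gap is the main obstacle. To get $\sqrt2$ I would use randomized rounding instead: color $e$ red independently with probability $p_e=\phi(\theta_e)$ for a suitable increasing $\phi$. Then
\[
\E_\chi\Tcol^\chi(f)\le \E_x\Bigl(\sum p_e\Bigr)^{1/2}+\dots
\]
by Jensen applied to the concave square root. It would therefore suffice to find $\phi$ with $\phi(\theta)\le 2\theta^2$ and $1-\phi(\theta)\le 2(1-\theta)^2$ for all $\theta\in[0,1]$. Such a $\phi$ exists if and only if
\[
1-2(1-\theta)^2\le 2\theta^2 ,
\]
which is equivalent to $(2\theta-1)^2\ge0$, so it always holds. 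One can take $\phi(\theta)=\min(1,2\theta^2)$, which lies in $[0,1]$ and also satisfies $\phi\ge 1-2(1-\theta)^2$. With this choice, the expected colored boundary is at most $\sqrt2\,\Ttheta(f)$. Some coloring $\chi$ achieves at most the expectation, so $\Ccol(f)\le\sqrt2\,\Tfrac(f)$ after taking the infimum over $\theta$.

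I expect the Jensen step and this pointwise inequality to be the crux. The remaining routine check is that at each $x$ only one of the two sums is present, so the red and blue contributions are estimated separately, point by point, with the same constant $\sqrt2$.
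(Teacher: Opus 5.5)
Your proof is correct and follows the paper's argument: independent randomized rounding of each sensitive edge, Jensen's inequality for the concave square root at each vertex, and the pointwise bounds $p_e\le 2\theta_e^2$ and $1-p_e\le 2(1-\theta_e)^2$. The only difference is the rounding probability. The paper takes $p_e=\theta_e^2/(\theta_e^2+(1-\theta_e)^2)$ and you take $\min(1,2\theta_e^2)$; both satisfy the two required bounds because $2\theta^2+2(1-\theta)^2-1=(2\theta-1)^2\ge 0$.
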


\begin{proof}
The first inequality follows directly from the definition, so it remains to prove the second inequality. Fix a fractional coloring \(\theta:E_f\to[0,1]\).  If
\(e=\{x,y\}\in E_f\) with \(f(x)=1\) and \(f(y)=-1\), then \(\theta_e\)
is the fraction assigned to the positive endpoint \(x\), while \(1-\theta_e\)
is the fraction assigned to the negative endpoint \(y\).

We randomly round each sensitive edge independently.  For
\(e=\{x,y\}\in E_f\), with \(f(x)=1\) and \(f(y)=-1\), color \(e\) red with
probability
\[
   p_e
   :=
   \frac{\theta_e^2}{\theta_e^2+(1-\theta_e)^2},
\]
and blue with the complementary probability \(1-p_e\).  Let \(\chi\) denote
the resulting random coloring.

If \(f(x)=1\), then by concavity of the square root,
\[
   \E_\chi\sqrt{s_{f,{\rm red}}(x)}
   \le
   \left(\E_\chi s_{f,{\rm red}}(x)\right)^{1/2}
   =
   \left(
      \sum_{\substack{i:\,f(x)=1\\ f(x^{\oplus i})=-1}}
      p_{\{x,x^{\oplus i}\}}
   \right)^{1/2}.
\]
Since
\[
   \theta_e^2+(1-\theta_e)^2\ge \frac12,
\]
we have
\[
   p_e
   =
   \frac{\theta_e^2}{\theta_e^2+(1-\theta_e)^2}
   \le
   2\theta_e^2.
\]
Therefore
\[
   \E_\chi\sqrt{s_{f,{\rm red}}(x)}
   \le
   \sqrt{2}
   \left(
      \sum_{\substack{i:\,f(x)=1\\ f(x^{\oplus i})=-1}}
      \theta_{\{x,x^{\oplus i}\}}^2
   \right)^{1/2}.
\]

Similarly, if \(f(x)=-1\), then
\[
   1-p_e
   =
   \frac{(1-\theta_e)^2}{\theta_e^2+(1-\theta_e)^2}
   \le
   2(1-\theta_e)^2,
\]
and hence
\[
   \E_\chi\sqrt{s_{f,{\rm blue}}(x)}
   \le
   \sqrt{2}
   \left(
      \sum_{\substack{i:\,f(x)=-1\\ f(x^{\oplus i})=1}}
      \bigl(1-\theta_{\{x,x^{\oplus i}\}}\bigr)^2
   \right)^{1/2}.
\]

Averaging over \(x\), we obtain
\[
   \E_\chi \Tcol^\chi(f)
   \le
   \sqrt{2}\,\Ttheta(f).
\]
Thus 
\[
   \Ccol(f)\le \sqrt{2}\,\Ttheta(f).
\]
Finally, taking the infimum over all fractional colorings \(\theta\) yields
\[
   \Ccol(f)\le \sqrt{2}\,\Tfrac(f).
\]
\end{proof}
\begin{remark}
In addition to being a natural extension of \(\Ccol(f)\), \(\Tfrac(f)\) coincides, up to a normalization factor of 4, with the row--column norm considered by Ben Efraim and Lust-Piquard in \cite{BenEfraimLustPiquard} for Boolean-valued functions.
\end{remark}

\subsection{The Beckmann boundary}
A vector field on the cube is a function
\[
   \V:\Omega_n\to\R^n,
   \qquad
   \V(x)=(\V_1(x),\ldots,\V_n(x)).
\]
We define its divergence by
\begin{equation}\label{eq:div-def}
   \divg \V(x)=\sum_{i=1}^n\big(\V_i(x)-\V_i(x^{\oplus i})\big).
\end{equation}
The Beckmann boundary of $f$ is
\begin{equation}\label{eq:B-def}
   \B(f)=\inf\left\{
        \E\|\V(x)\|_2:
        \divg \V=Lf
   \right\}.
\end{equation}
The feasible set is nonempty.  For example, $\V_i=f/2$ satisfies $\divg \V=Lf$.

{\begin{remark}\label{rem:endpoint-fields}
If $\V_0$ is a vector field on $\{-1,1\}^n$ with $\divg\V_0=Lf$, then
\[
   \B(f)
   =
   \inf_{\divg H=0}\E\|\V_0(x)+H(x)\|_2,
\]
i.e., the Beckmann boundary is a quotient norm: adding a divergence-free field
may change the representative. This freedom allows nonsensitive edges and
cancellations, as exploited in the quotient-cube construction of
Theorem~\ref{thm:unbounded-separation}.
\end{remark}}

\begin{proposition}[Hierarchy]\label{prop:hierarchy}
For every Boolean function $f$,
\begin{equation}\label{eq:hierarchy}
   \B(f)\le \Tfrac(f)\le \Ccol(f)\le \Tcol^\chi(f)
\end{equation}
for every red--blue coloring $\chi$. In particular,
\[
   \B(f)\le \Tcol^\chi(f)
\]
for every red--blue coloring $\chi$, and
\[
   \B(f)\le \E\sqrt{h_{\{f=1\}}},
   \qquad
   \B(f)\le \E\sqrt{h_{\{f=-1\}}}.
\]
\end{proposition}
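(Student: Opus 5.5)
Only the first inequality $\B(f)\le\Tfrac(f)$ needs an argument. The middle inequality $\Tfrac(f)\le\Ccol(f)$ is Proposition~\ref{prop:fractional-integral-comparable}, and $\Ccol(f)\le\Tcol^\chi(f)$ holds by definition of the infimum. The final two displayed inequalities are the special cases of the all-red and all-blue colorings, which give $\E\sqrt{h_{\{f=1\}}}$ and $\E\sqrt{h_{\{f=-1\}}}$ as recorded in Section~\ref{sec:notation}. So the plan is: fix an arbitrary fractional coloring $\theta:E_f\to[0,1]$, build an admissible field $\V$ with $\divg\V=Lf$ and $\E\|\V\|_2=\Ttheta(f)$, and then take the infimum over $\theta$.

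\textbf{Construction.} For each sensitive edge $e=\{x,x^{\oplus i}\}$ with $f(x)=1$ and $f(x^{\oplus i})=-1$, set
\[
\V_i(x)=\theta_e,\qquad \V_i(x^{\oplus i})=-(1-\theta_e).
\]
Set $\V_i(x)=0$ whenever $\{x,x^{\oplus i}\}$ is not sensitive. Each pair $(x,i)$ belongs to exactly one edge, so this is well defined.

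\textbf{Verifying the divergence.} Since $\divg$ and $L$ are both sums over $i$ of terms that depend only on the edge $\{x,x^{\oplus i}\}$, it suffices to check the identity edge by edge: $\V_i(x)-\V_i(x^{\oplus i})=D_if(x)$ for every $x$ and $i$.
\begin{itemize}
\item On a nonsensitive edge both sides vanish.
\item On a sensitive edge with $f(x)=1$, the left side is $\theta_e+(1-\theta_e)=1$, and $D_if(x)=(1-(-1))/2=1$.
\item At the other endpoint $y=x^{\oplus i}$, the left side is $\V_i(y)-\V_i(x)=-(1-\theta_e)-\theta_e=-1=D_if(y)$.
\end{itemize}
Hence $\divg\V=Lf$. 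This edgewise bookkeeping, in particular getting the sign at the negative endpoint right, is the only real point, and the normalization $D_i f=\pm1$ on sensitive edges makes it work exactly.

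\textbf{Computing the cost.} If $f(x)=1$, the nonzero coordinates of $\V(x)$ are the $\theta_e$ over sensitive edges at $x$. If $f(x)=-1$, they are $-(1-\theta_e)$. Therefore $\|\V(x)\|_2$ equals exactly the square root appearing in the definition of $\Ttheta(f)$, and $\E\|\V\|_2=\Ttheta(f)$. It follows that $\B(f)\le\Ttheta(f)$ for every $\theta$, and taking the infimum over $\theta$ gives $\B(f)\le\Tfrac(f)$, completing \eqref{eq:hierarchy}.
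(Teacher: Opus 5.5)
Your proposal is correct and follows the paper's proof essentially verbatim. It uses the same field $\V_i(x)=\theta_e$, $\V_i(x^{\oplus i})=\theta_e-1$ on sensitive edges, the same edge-by-edge divergence check, and the same exact cost identity $\E\|\V\|_2=\Ttheta(f)$, with $\Tfrac(f)\le\Ccol(f)$ coming from viewing integral colorings as $\{0,1\}$-valued fractional colorings (the first half of Proposition~\ref{prop:fractional-integral-comparable}, as you cite).
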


\begin{proof}
Fix a fractional coloring $\theta$.  We build a field $\V^\theta$.  If $e=\{x,x^{\oplus i}\}$ is sensitive, $f(x)=1$ and $f(x^{\oplus i})=-1$, set
\[
   \V_i^\theta(x)=\theta_e,
   \qquad
   \V_i^\theta(x^{\oplus i})=\theta_e-1.
\]
On nonsensitive edges set the corresponding component equal to $0$.  Then on each sensitive edge
\[
   \V_i^\theta(x)-\V_i^\theta(x^{\oplus i})=1=D_i f(x)
\]
at the positive endpoint, and the same identity with both sides negated holds at the negative endpoint.  On nonsensitive edges both sides vanish.  Hence
\[
   \divg \V^\theta=Lf.
\]
Moreover,
\[
   \E\|\V^\theta(x)\|_2=\Ttheta(f).
\]
Taking the infimum over all feasible vector fields gives $\B(f)\le\Ttheta(f)$, and taking the infimum over $\theta$ gives $\B(f)\le\Tfrac(f)$.  Since integral colorings form a subclass of fractional colorings, $\Tfrac(f)\le\Ccol(f)$, while the definition of $\Ccol$ gives $\Ccol(f)\le\Tcol^\chi(f)$ for every $\chi$.  The one-sided assertions follow from the all-red and all-blue colorings.
\end{proof}

\section{The Beckmann Poincar\'e inequality}\label{sec:Poincar\'e}

We first recall the standard pointwise reverse Poincar\'e estimate, which follows from the heat-kernel representation in \cite[Lemma~2.1]{IVHV}; see also \cite[Section 6]{ILvHV} for a semigroup argument. Write $Dg:=(D_i g)_{i\in[n]}$.

\begin{lemma}[Reverse Poincar\'e]\label{lem:reverse-Poincar\'e}
For every $g:\Omega_n\to\R$, every $t>0$ and every $x\in\Omega_n$,
\begin{equation}\label{eq:reverse-Poincar\'e}
   \|DP_tg(x)\|_2^2
   \le
   \frac{P_tg^2(x)-(P_tg(x))^2}{e^{2t}-1}.
\end{equation}
In particular, if $\|g\|_\infty\le1$, then
\begin{equation}\label{eq:reverse-Poincar\'e-infty}
   \|DP_tg(x)\|_2\le\frac{1}{\sqrt{e^{2t}-1}}.
\end{equation}
\end{lemma}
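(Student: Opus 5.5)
We prove the pointwise reverse Poincaré inequality \eqref{eq:reverse-Poincar\'e} with the standard semigroup interpolation argument, in the form recorded in \cite[Section 6]{ILvHV}. Fix $x$ and $t>0$. For $0\le s\le t$ define
\[
   \Phi(s)=P_s\big((P_{t-s}g)^2\big)(x).
\]
Then $\Phi(t)-\Phi(0)=P_tg^2(x)-(P_tg(x))^2$, and the task is to show that this quantity is bounded below by $(e^{2t}-1)\|DP_tg(x)\|_2^2$.

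\emph{Step 1 (derivative).} Put $h=P_{t-s}g$. Differentiating gives $\Phi'(s)=P_s\big(-L(h^2)+2hLh\big)(x)$. Write $Lh=\sum_i(h-h\circ\oplus i)/2$ and use the identity $-(h^2-h^2\circ\oplus i)/2+h(h-h\circ\oplus i)=(h-h\circ\oplus i)^2/2$. This yields the carré du champ
\[
   \Gamma(h):=\frac12\sum_i(h-h\circ\oplus i)^2=2\|Dh\|_2^2,
\]
so that
\[
   \Phi'(s)=2P_s\big(\|DP_{t-s}g\|_2^2\big)(x).
\]

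\emph{Step 2 (commutation, the key step).} We need the bound
\[
   P_s\big(\|DP_{t-s}g\|_2^2\big)\ge e^{2s}\|DP_tg\|_2^2 .
\]
On the cube, $D_iP_s=P_sD_i$ because $D_i$ is a Fourier multiplier. However, $P_s$ does not commute with $D_i$ in a way that preserves the naive constant, so a direct Jensen argument only gives $P_s(D_iu)^2\ge (P_sD_iu)^2=(D_iP_su)^2$, with no factor $e^{2s}$. The gain must come from a finer decomposition. Write $D_iu=x_i\,\partial_iu$, where $\partial_iu$ does not depend on $x_i$. Then $D_iP_su=e^{-s}x_iP_s\partial_iu$, since the $x_i$ factor decays like $e^{-s}$ while $P_s$ acts on $\partial_iu$ without touching coordinate $i$. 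Applying Jensen to $P_s$ acting on $(\partial_iu)^2$, and noting that $|D_iu|=|\partial_iu|$, we get
\[
   P_s(D_iu)^2\ge (P_s\partial_iu)^2=e^{2s}(D_iP_su)^2 .
\]
This is the Bakry--Émery $\Gamma_2\ge\Gamma$ curvature bound in pointwise form. Summing over $i$ with $u=P_{t-s}g$ gives the claimed inequality.

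\emph{Step 3 (integration).} Combining Steps 1 and 2 gives $\Phi'(s)\ge 2e^{2s}\|DP_tg(x)\|_2^2$. Integrating over $s\in[0,t]$ produces exactly the factor $e^{2t}-1$, which proves \eqref{eq:reverse-Poincar\'e}.

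For \eqref{eq:reverse-Poincar\'e-infty}: if $\|g\|_\infty\le 1$, then $P_tg^2(x)\le 1$, so the numerator $P_tg^2(x)-(P_tg(x))^2$ is at most $1$. Taking square roots gives the bound.

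The main obstacle is Step 2, namely extracting the $e^{2s}$ factor via the $x_i\,\partial_i$ factorization; the rest is routine differentiation. As an alternative route, one can verify the inequality directly from the heat-kernel formula of \cite[Lemma~2.1]{IVHV}, which writes $D_iP_tg(x)$ as $\E\big[\delta_i\, g(\xi)\big]/\sqrt{e^{2t}-1}$ with biased Rademacher variables $\delta_i$ that are orthonormal under the kernel. Bessel's inequality then gives $\|DP_tg(x)\|_2^2\le \Var_{\xi}(g)/(e^{2t}-1)$, which is the same bound.
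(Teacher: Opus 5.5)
Your proof is correct, but your main argument is not the paper's. It is the semigroup interpolation proof that the paper only cites via \cite[Section~6]{ILvHV}.

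\textbf{The paper's route.} The paper fixes $x$, writes $P_tg(x)=\E_\varepsilon G(\varepsilon)$ with $G(\varepsilon)=g(x\varepsilon)$, and uses the IVHV computation
\[
   D_iP_tg(x)=(e^{2t}-1)^{-1/2}\,\E\big[(G-\E G)\delta_i\big].
\]
The $\delta_1,\dots,\delta_n$ are orthonormal and orthogonal to constants in the noise space. Bessel's inequality then gives $\sum_i\E[(G-\E G)\delta_i]^2\le\E(G-\E G)^2=P_tg^2(x)-(P_tg(x))^2$ in one line. This is exactly the ``alternative route'' in your last paragraph.

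\textbf{Your route.} Your interpolation $\Phi(s)=P_s((P_{t-s}g)^2)(x)$ reduces everything to two facts:
\begin{itemize}
\item the carr\'e du champ identity $\Phi'(s)=2P_s\|DP_{t-s}g\|_2^2$;
\item the pointwise commutation bound $\|DP_su\|_2^2\le e^{-2s}P_s\|Du\|_2^2$.
\end{itemize}
You verify the second correctly through $D_iu=x_i\partial_iu$, $P_s(x_iw)=e^{-s}x_iP_sw$ for $w$ independent of $x_i$, and Jensen for the Markov operator $P_s$. One sentence in Step 2 is garbled: $P_s$ and $D_i$ do commute exactly. What you mean is that applying Jensen to $D_iu$ directly loses the factor $e^{2s}$.

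\textbf{What each approach buys.}
\begin{itemize}
\item The paper's route uses the explicit kernel, which it needs anyway in Section~\ref{sec:ivhv-spectral}. It shows the reverse Poincar\'e estimate as a Bessel/Cauchy--Schwarz shadow of an exact identity. The discarded part is the component of $G-\E G$ orthogonal to the first chaos. Keeping $\E|\sum_ia_i\delta_i|$ instead of passing to $\|a\|_2$ is precisely what the strong spectral estimates exploit.
\item Your route never touches the kernel. Steps 1 and 3 hold verbatim for any reversible Markov semigroup. So any semigroup satisfying $\Gamma(P_su)\le e^{-2Ks}P_s\Gamma(u)$ gets the analogous estimate with $(e^{2Kt}-1)/K$ in place of $e^{2t}-1$.
\item On the cube itself, your Step 2 rests on the same product-structure fact, $P_sx_i=e^{-s}x_i$, that underlies the IVHV identity. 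So neither argument is genuinely more elementary there.
\end{itemize}
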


One proof is recalled in Appendix \ref{app:heat-kernel}.

\begin{theorem}[Beckmann--Poincar\'e inequality]\label{thm:beckmann-Poincar\'e}
Let $f:\Omega_n\to\R$ with $\|f\|_\infty\le1$.  If $\V$ is a vector field satisfying
$\divg \V=Lf$, then for every $t>0$,
\begin{equation}\label{eq:beckmann-Poincar\'e}
   \ip{f}{(I-P_t)f}
   \le
   2\arctan\sqrt{e^{2t}-1}\,\E\|\V(x)\|_2.
\end{equation}
Consequently, for $t>0$,
\begin{equation}\label{eq:beckmann-Poincar\'e-small-t}
   \ip{f}{(I-P_t)f}
   \lesssim
   \sqrt t\,\B(f).
\end{equation}
\end{theorem}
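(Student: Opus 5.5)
The plan is to express $\ip{f}{(I-P_t)f}$ through the semigroup and then move the operator $L$ onto the vector field $\V$. Since $\frac{d}{ds}P_sf=-LP_sf$ and $L$ is self-adjoint and commutes with $P_s$, we have
\[
   \ip{f}{(I-P_t)f}=\int_0^t\ip{f}{LP_sf}\,\dd s=\int_0^t\ip{Lf}{P_sf}\,\dd s=\int_0^t\ip{\divg\V}{P_sf}\,\dd s .
\]
This is an identity on a finite-dimensional space, so no convergence issues arise.

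The next step is a discrete integration by parts. For any $g$ and each $i$, the map $x\mapsto x^{\oplus i}$ preserves the uniform measure, so
\[
   \E[\V_i(x^{\oplus i})g(x)]=\E[\V_i(x)g(x^{\oplus i})].
\]
Hence
\[
   \ip{\divg\V}{g}=\sum_i\E[\V_i(x)(g(x)-g(x^{\oplus i}))]=2\sum_i\ip{\V_i}{D_ig}.
\]
Applying this with $g=P_sf$ and using Cauchy--Schwarz in $\R^n$ pointwise gives
\[
   \ip{\divg\V}{P_sf}\le 2\,\E\bigl[\|\V(x)\|_2\,\|DP_sf(x)\|_2\bigr].
\]
Since $\|f\|_\infty\le1$, Lemma~\ref{lem:reverse-Poincar\'e} (estimate \eqref{eq:reverse-Poincar\'e-infty}) bounds the right-hand side by $2(e^{2s}-1)^{-1/2}\E\|\V\|_2$.

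It remains to evaluate $\int_0^t(e^{2s}-1)^{-1/2}\,\dd s$. Substituting $u=\sqrt{e^{2s}-1}$ gives $e^{2s}=1+u^2$, so $\dd s=u\,\dd u/(1+u^2)$. The integral therefore becomes $\int_0^{\sqrt{e^{2t}-1}}\dd u/(1+u^2)=\arctan\sqrt{e^{2t}-1}$. The integrand has only an integrable singularity $\sim(2s)^{-1/2}$ at $s=0$, so this step is harmless. Together these give \eqref{eq:beckmann-Poincar\'e}.

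For \eqref{eq:beckmann-Poincar\'e-small-t}, take the infimum over admissible $\V$; the feasible set is nonempty, as noted after \eqref{eq:B-def}. Then use $\arctan y\le\min(y,\pi/2)$. For $t\le1$ we have $\sqrt{e^{2t}-1}\le\sqrt{2e^2t}\lesssim\sqrt t$. For $t\ge1$ the bound $\pi/2\le(\pi/2)\sqrt t$ suffices. No step is a real obstacle; the only point needing care is the integration-by-parts sign and the factor $2$ coming from the $1/2$ normalization of $D_i$.
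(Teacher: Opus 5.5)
Your proposal is correct and follows the paper's proof essentially step for step. The chain is the same: the semigroup identity, integration by parts giving the factor $2$, pointwise Cauchy--Schwarz combined with the reverse Poincar\'e bound, and the explicit evaluation $\int_0^t (e^{2s}-1)^{-1/2}\,\dd s=\arctan\sqrt{e^{2t}-1}$. Your substitution and the $\min(y,\pi/2)$ bound on $\arctan y$ simply fill in details the paper leaves implicit.
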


\begin{proof}
Since $P_t=e^{-tL}$,
\[
   \ip{f}{(I-P_t)f}
   =\int_0^t\ip{Lf}{P_s f}\,\dd s.
\]
Using $Lf=\divg \V$ and the definition \eqref{eq:div-def},
\[
   \ip{\divg \V}{P_s f}
   =\sum_{i=1}^n\E\big[(\V_i(x)-\V_i(x^{\oplus i}))P_s f(x)\big].
\]
Changing variables $x\mapsto x^{\oplus i}$ in the second term gives
\[
   \ip{\divg \V}{P_s f}
   =\sum_{i=1}^n\E \V_i(x)\big(P_s f(x)-P_s f(x^{\oplus i})\big)
   =2\sum_{i=1}^n\ip{\V_i}{D_iP_s f}.
\]
Therefore
\[
   \ip{f}{(I-P_t)f}
   =2\int_0^t\E\sum_i \V_i(x)D_iP_s f(x)\,\dd s.
\]
By Cauchy--Schwarz in the coordinate index and Lemma \ref{lem:reverse-Poincar\'e},
\[
  \left| \E\sum_i \V_iD_iP_s f\right|
   \le
   \E\big[\|\V(x)\|_2\|DP_s f(x)\|_2\big]
   \le
   \frac{\E\|\V(x)\|_2}{\sqrt{e^{2s}-1}}.
\]
Thus
\[
   \ip{f}{(I-P_t)f}
   \le
   2\E\|\V\|_2\int_0^t\frac{\dd s}{\sqrt{e^{2s}-1}}.
\]
The integral is explicit:
\[
   \int_0^t\frac{\dd s}{\sqrt{e^{2s}-1}}
   =\arctan\sqrt{e^{2t}-1}.
\]
This gives \eqref{eq:beckmann-Poincar\'e}.  Taking the infimum over all admissible $\V$ gives the same estimate with $\B(f)$ in place of $\E\|\V\|_2$.  Finally,
\[
   \arctan\sqrt{e^{2t}-1}\lesssim \sqrt t
\]
for $t>0$.
\end{proof}

\begin{corollary}[High-degree estimate]\label{cor:high-degree}
For every Boolean $f:\Omega_n\to\{-1,1\}$ and every $d\ge1$,
\begin{equation}\label{eq:high-degree}
   \B(f)\gtrsim \sqrt d\,W_{\ge d}[f].
\end{equation}
\end{corollary}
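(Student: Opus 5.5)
The plan is to apply the Beckmann--Poincar\'e inequality (Theorem~\ref{thm:beckmann-Poincar\'e}) with the time parameter matched to the degree threshold, namely $t=1/d$. A Boolean $f$ satisfies $\|f\|_\infty\le 1$, so \eqref{eq:beckmann-Poincar\'e-small-t} applies and gives
\[
   \ip{f}{(I-P_{1/d})f}\lesssim \frac{1}{\sqrt d}\,\B(f).
\]
What remains is to bound the left-hand side from below by a constant multiple of $W_{\ge d}[f]$.

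For this lower bound, expand in the Walsh basis. Since $P_t\chi_S=e^{-t|S|}\chi_S$,
\[
   \ip{f}{(I-P_t)f}=\sum_{S}\bigl(1-e^{-t|S|}\bigr)\widehat f(S)^2 .
\]
Every term is nonnegative. We therefore discard all $S$ with $|S|<d$. For $|S|\ge d$ and $t=1/d$ we have $t|S|\ge 1$, so $1-e^{-t|S|}\ge 1-e^{-1}$. This yields
\[
   \ip{f}{(I-P_{1/d})f}\ge (1-e^{-1})\,W_{\ge d}[f].
\]

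Combining the two displays gives $(1-e^{-1})W_{\ge d}[f]\lesssim d^{-1/2}\B(f)$, which rearranges to \eqref{eq:high-degree}. There is no real obstacle here. The only points to check are these:
\begin{itemize}
\item the implicit constant in $\arctan\sqrt{e^{2t}-1}\lesssim\sqrt t$ is uniform in $t>0$, so it does not depend on $d$;
\item the Boolean hypothesis is used only through $\|f\|_\infty\le1$.
\end{itemize}
On the first point: for $t\le1$, the bound $e^{2t}-1\le e^2\cdot 2t$ gives $\arctan\sqrt{e^{2t}-1}\le\sqrt{e^{2t}-1}\lesssim\sqrt t$. For $t\ge1$, $\arctan$ is bounded by $\pi/2\le\tfrac{\pi}{2}\sqrt t$.
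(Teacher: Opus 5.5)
Your proposal is correct and matches the paper's proof: apply the Beckmann--Poincar\'e inequality with $t=1/d$, and lower-bound $\ip{f}{(I-P_{1/d})f}$ by $(1-e^{-1})W_{\ge d}[f]$ via the Walsh expansion. Your check that $\arctan\sqrt{e^{2t}-1}\lesssim\sqrt t$ holds uniformly in $t>0$ is a correct filling-in of the step the paper states without detail.
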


\begin{proof}
Taking Fourier expansions,
\[
   \ip{f}{(I-P_t)f}
   =\sum_{S\ne\varnothing}(1-e^{-t|S|})\widehat f(S)^2.
\]
With $t=1/d$, the terms with $|S|\ge d$ contribute at least $(1-e^{-1})W_{\ge d}[f]$.  Theorem \ref{thm:beckmann-Poincar\'e} gives
\[
   W_{\ge d}[f]
   \lesssim
   d^{-1/2}\B(f),
\]
which is \eqref{eq:high-degree}.
\end{proof}

\begin{proposition}[Variational formula and strong spectral upper bound]\label{prop:strong-spectral-upper}
For every function $\rho:\Omega_n\to(0,\infty)$ with $\E\rho=1$, define
\[
   L_\rho g(x)
   :=\sum_{i=1}^n
      \frac{D_i g(x)}{\rho(x)+\rho(x^{\oplus i})}.
\]
Then $L_\rho$ is positive and self-adjoint, and every Boolean $f$ satisfies
\begin{equation}\label{eq:Tfrac-variational}
   \Tfrac(f)^2
   =\frac12\inf_{\substack{\rho>0\\ \E\rho=1}}
      \ip{f}{L_\rho f}
   =\frac12\inf_{\substack{\rho>0\\ \E\rho=1}}
      \E\sum_{i=1}^n
      \frac{(D_i f(x))^2}{\rho(x)+\rho(x^{\oplus i})}.
\end{equation}
Consequently,
\begin{equation}\label{eq:strong-spectral-upper}
   \B(f)\le \Tfrac(f)
   \le \frac12\ip{f}{L^{1/2}f}
   =\frac12\sum_{k=1}^n\sqrt{k}\,W_{=k}[f].
\end{equation}
In particular, Corollary \ref{cor:high-degree} and summation by parts give
\begin{equation}\label{eq:weak-strong-sandwich}
   \sup_{1\le d\le n}\sqrt d\,W_{\ge d}[f]
   \lesssim \B(f)
   \le \frac12\sum_{d=1}^n
      (\sqrt d-\sqrt{d-1})W_{\ge d}[f].
\end{equation}
The factor $1/2$ in \eqref{eq:strong-spectral-upper} is sharp.
\end{proposition}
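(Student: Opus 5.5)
The plan is to prove the variational formula \eqref{eq:Tfrac-variational} by a Cauchy--Schwarz duality, choose a specific density $\rho$ built from $f$ to get \eqref{eq:strong-spectral-upper}, and then read off \eqref{eq:weak-strong-sandwich} and sharpness.

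\textbf{Self-adjointness and the variational formula.} For $\theta$ fixed, write $Q_\theta(x)$ for the quantity under the square root in $\Ttheta(f)$. By Cauchy--Schwarz, for every $\rho>0$ with $\E\rho=1$,
\[
   \E\sqrt{Q_\theta}=\E\big[\sqrt{Q_\theta/\rho}\,\sqrt\rho\big]\le \big(\E[Q_\theta/\rho]\big)^{1/2}.
\]
Equality is approached by $\rho\propto\sqrt{Q_\theta}+\varepsilon$ with $\varepsilon\downarrow0$. Hence
\[
   \Ttheta(f)^2=\inf_\rho \E[Q_\theta/\rho]
   \quad\text{and}\quad
   \Tfrac(f)^2=\inf_\rho\inf_\theta\E[Q_\theta/\rho].
\]
For fixed $\rho$ the inner problem decouples over sensitive edges $e=\{x,y\}$, with $f(x)=1$. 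Each such edge contributes $2^{-n}\big(\theta_e^2/\rho(x)+(1-\theta_e)^2/\rho(y)\big)$, whose minimum over $\theta_e\in[0,1]$ is $2^{-n}/(\rho(x)+\rho(y))$. The sum $\E\sum_i (D_if)^2/(\rho(x)+\rho(x^{\oplus i}))$ counts each sensitive edge once from each endpoint, since $(D_if)^2=1$ there; this yields the factor $\tfrac12$.

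The identity $\ip{f}{L_\rho f}=\E\sum_i (D_if)^2/(\rho(x)+\rho(x^{\oplus i}))$, and more generally the self-adjointness and positivity of $L_\rho$, follow from the symmetrization $x\mapsto x^{\oplus i}$. The weight $1/(\rho(x)+\rho(x^{\oplus i}))$ is invariant under this map while $D_i$ changes sign. Therefore
\[
   \ip{g}{L_\rho h}=\tfrac12\E\sum_i \frac{2D_ig\,D_ih}{\rho(x)+\rho(x^{\oplus i})}.
\]

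\textbf{The spectral upper bound.} I will take $\rho=g/\E g$ with $g:=f\cdot L^{1/2}f$. By subordination, with $L^{1/2}=c\int_0^\infty(I-P_t)t^{-3/2}\,\dd t$, we get
\[
   g(x)=c\int_0^\infty t^{-3/2}\big(1-f(x)P_tf(x)\big)\,\dd t\ge0,
   \qquad
   \E g=\ip{f}{L^{1/2}f}.
\]
On a sensitive edge,
\[
   g(x)+g(x^{\oplus i})=2f(x)D_iL^{1/2}f(x)=2D_if(x)\,D_iL^{1/2}f(x).
\]
This quantity is positive after an $\varepsilon$-regularization. The desired inequality $\tfrac12\ip{f}{L_\rho f}\le\tfrac14(\E g)^2$ then reduces to
\[
   \E\sum_i\frac{(D_if)^2}{2D_ifD_iL^{1/2}f}\le\frac12\E g=\frac12\E\sum_iD_if\,D_iL^{1/2}f .
\]
This step is the \textbf{main obstacle}. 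It does not follow termwise. What I would try first is to replace this $\rho$ by a $\rho$ built from $P_t$-averaged quantities, for instance the subordinated average of the $\rho$'s that are optimal for the noise-stability forms $\ip{f}{(I-P_t)f}$, and use convexity of $\rho\mapsto\ip{f}{L_\rho f}$ (via $1/(a+b)$ jointly convex) together with Jensen. If this fails, I would fall back to constructing an explicit fractional coloring $\theta_e$ from $D_iL^{1/2}f$ on each edge.

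\textbf{Sandwich and sharpness.} Once \eqref{eq:strong-spectral-upper} holds, \eqref{eq:weak-strong-sandwich} follows. The left inequality is Corollary~\ref{cor:high-degree}. The right one uses $\sqrt k=\sum_{d\le k}(\sqrt d-\sqrt{d-1})$ and exchanges the sums. For sharpness I take the dictator $f=x_1$, where the right side of \eqref{eq:strong-spectral-upper} equals $\tfrac12$. If $\divg \V=Lf=x_1$, then pairing with $x_1$ gives
\[
   1=\ip{\divg\V}{x_1}=2\E[\V_1x_1]\le 2\E\|\V\|_2,
\]
so $\B(f)\ge\tfrac12$. Hence all quantities in \eqref{eq:strong-spectral-upper} equal $\tfrac12$.
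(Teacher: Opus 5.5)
\textbf{The gap: the spectral upper bound is not proved.} Your derivation of the variational formula is correct and essentially the paper's: Cauchy--Schwarz against $\sqrt\rho$ with $\E\rho=1$ is equivalent to writing $\|u\|_2=\inf_r\frac12(r+\|u\|_2^2/r)$ and then rescaling $r=c\rho$. The self-adjointness argument, the sandwich \eqref{eq:weak-strong-sandwich}, and the dictator sharpness argument are also fine; the last even shows that $\tfrac12$ is sharp already for $\B$. What is missing is the heart of the proposition, $\Tfrac(f)\le\frac12\ip{f}{L^{1/2}f}$, which you do not prove.

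With $\rho=g/\E g$ and $g=fL^{1/2}f$, you need
\[
   \E\sum_i\frac{(D_if)^2}{g(x)+g(x^{\oplus i})}\le\tfrac12\E g,
\]
and you leave this open. In addition, the identity you display, $\E g=\E\sum_iD_if\,D_iL^{1/2}f$, is false. The right-hand side equals $\ip{f}{L^{3/2}f}$, which is at least $\ip{f}{L^{1/2}f}=\E g$. So the reduction as written aims at a weaker inequality than the one required.

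Your density also does not admit an edge-by-edge proof. For a sensitive edge $\{x,y\}$ with $f(x)=1$, put $Q_t=e^{-tL^{1/2}}$ and $h(t)=Q_tf(x)-Q_tf(y)$. Then $g(x)+g(y)=-h'(0)$, and the edgewise inequality that would let the sum close (see below) becomes $\int_0^\infty h^2\,\dd t\ge 4/(-h'(0))$. This would follow from Jensen if $h$ were completely monotone. However, $h(t)=2\sum_{S\ni i}\widehat f(S)\chi_S(x)e^{-t\sqrt{|S|}}$ has signed coefficients, and the inequality fails. Take $f=x_1$ on $\{x_3=1\}$ and $f=x_1x_2$ on $\{x_3=-1\}$, at the direction-$1$ edge through $(1,1,1)$: there $\int_0^\infty h^2\,\dd t\approx1.856$, while $4/(-h'(0))\approx1.908$. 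Any proof for your $\rho$ would therefore have to be genuinely global. The convexity/Jensen fallback you sketch does not supply such an argument.

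\textbf{The missing idea.} Use a time-integrated density instead of the $t=0$ quantity $fL^{1/2}f=-\frac12\partial_t(Q_tf)^2|_{t=0}$, which only records the initial slope. The paper sets $H=\ip{f}{L^{1/2}f}$ and
\[
   \rho(x)=\frac2H\int_0^\infty|\partial_tQ_tf(x)|^2\,\dd t .
\]
This has $\E\rho=1$ because $\int_0^\infty\ip{Q_tf}{LQ_tf}\,\dd t=H/2$. On a sensitive edge, $h(0)^2=4$ and $h(t)\to0$. Cauchy--Schwarz in the time variable then gives
\[
   4=-\int_0^\infty(h^2)'\,\dd t\le2\Bigl(\int_0^\infty h^2\,\dd t\Bigr)^{1/2}\Bigl(\int_0^\infty (h')^2\,\dd t\Bigr)^{1/2},
\]
while $\int_0^\infty (h')^2\,\dd t\le H(\rho(x)+\rho(y))$. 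Hence
\[
   \frac{1}{\rho(x)+\rho(y)}\le\frac H4\int_0^\infty h^2\,\dd t
\]
on every sensitive edge, with no sign condition on the spectrum. Sum over sensitive edges, enlarge the sum to all edges, and use the energy identity again. This gives
\[
   \E\sum_i\frac{(D_if)^2}{\rho(x)+\rho(x^{\oplus i})}\le\frac{H^2}{2},
\]
which by \eqref{eq:Tfrac-variational} is exactly $\Tfrac(f)\le H/2$.
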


\begin{proof}
Put $N=2^n$.  For every incidence of a sensitive edge $e$ at a vertex
$x$, let $a_{x,e}$ be the fraction of $e$ assigned to $x$.  Thus
$a_{x,e}\in[0,1]$ and $a_{x,e}+a_{y,e}=1$ whenever
$e=\{x,y\}\in E_f$, and
\[
   N\Tfrac(f)
   =\inf_a\sum_{x\in\Omega_n}
      \left(\sum_{\substack{e\in E_f\\e\ni x}}a_{x,e}^2\right)^{1/2}.
\]
Using
\[
   \|u\|_2
   =\inf_{r>0}\frac12\left(r+\frac{\|u\|_2^2}{r}\right)
\]
at each vertex gives
\[
   N\Tfrac(f)
   =\frac12\inf_{\substack{r_x>0\\a_{x,e}+a_{y,e}=1}}
      \left[
         \sum_x r_x+
         \sum_x\sum_{e\ni x}\frac{a_{x,e}^2}{r_x}
      \right].
\]
For fixed $r$, the minimization separates over the sensitive edges.  On
$e=\{x,y\}$,
\[
   \inf_{a+b=1}\left(\frac{a^2}{r_x}+\frac{b^2}{r_y}\right)
   =\frac1{r_x+r_y},
\]
with minimizer
\[
   a=\frac{r_x}{r_x+r_y},
   \qquad
   b=\frac{r_y}{r_x+r_y}.
\]
Hence
\begin{equation}\label{eq:Tfrac-r-variational}
   N\Tfrac(f)
   =\frac12\inf_{r_x>0}
      \left[
         \sum_x r_x+
         \sum_{\{x,y\}\in E_f}\frac1{r_x+r_y}
      \right].
\end{equation}
Since each undirected sensitive edge is counted at both endpoints,
\eqref{eq:Tfrac-r-variational} is equivalently
\begin{equation}\label{eq:Tfrac-r-normalized}
   \Tfrac(f)
   =\inf_{r>0}\left\{
      \frac12\E r+
      \frac14\E\sum_{i=1}^n
      \frac{(D_i f(x))^2}{r(x)+r(x^{\oplus i})}
   \right\}.
\end{equation}
Write $r=c\rho$, where $c>0$ and $\E\rho=1$.  For fixed $\rho$, the
right-hand side of \eqref{eq:Tfrac-r-normalized} becomes
\[
   \frac c2+\frac1{4c}
      \E\sum_{i=1}^n
      \frac{(D_i f(x))^2}{\rho(x)+\rho(x^{\oplus i})}.
\]
Minimizing over $c>0$ proves \eqref{eq:Tfrac-variational}.  Indeed, if
$w_i(x)=(\rho(x)+\rho(x^{\oplus i}))^{-1}$, then
$w_i(x)=w_i(x^{\oplus i})$, so multiplication by $w_i$ commutes with
$D_i$.  It follows that $L_\rho=\sum_iM_{w_i}D_i$ is positive and
self-adjoint and that
\[
   \ip{f}{L_\rho f}
   =\E\sum_iw_i(x)(D_i f(x))^2.
\]

It remains to prove the upper bound in \eqref{eq:strong-spectral-upper}.
The assertion is trivial when $f$ is constant, so assume that $f$ is
nonconstant and put
\[
   H:=\ip{f}{L^{1/2}f}>0,
   \qquad Q_t:=e^{-tL^{1/2}}.
\]
Choose the density
\begin{equation}\label{eq:Poisson-rho}
   \rho(x):=\frac2H\int_0^\infty
      \left|\partial_tQ_tf(x)\right|^2\,\dd t.
\end{equation}
Since $Q_t f\to\E f$ as $t\to\infty$ and $f(x)\ne\E f$ at every
$x$, this density is strictly positive.  Moreover,
\[
\begin{aligned}
   \E\rho
   =\frac2H\int_0^\infty
      \|\partial_tQ_tf\|_2^2\,\dd t
   =\frac2H\int_0^\infty
      \ip{Q_tf}{LQ_tf}\,\dd t
   =1,
\end{aligned}
\]
because the spectral decomposition of $L$ gives
\begin{equation}\label{eq:Poisson-energy}
   \int_0^\infty\ip{Q_tf}{LQ_tf}\,\dd t
   =\frac12\ip{f}{L^{1/2}f}
   =\frac H2.
\end{equation}

Let $y=x^{\oplus i}$ be a sensitive neighbor of $x$, and set
\[
   h(t):=Q_tf(x)-Q_tf(y).
\]
Then $h(0)^2=4$ and $h(t)\to0$.  Integration by parts followed by
Cauchy--Schwarz yields
\[
   4=-\int_0^\infty (h(t)^2)'\,\dd t
   \le2\left(\int_0^\infty h(t)^2\,\dd t\right)^{1/2}
          \left(\int_0^\infty h'(t)^2\,\dd t\right)^{1/2}.
\]
On the other hand,
\[
\begin{aligned}
   \int_0^\infty h'(t)^2\,\dd t
   \le2\int_0^\infty\left(
      |\partial_tQ_tf(x)|^2+|\partial_tQ_tf(y)|^2
   \right)\,\dd t
   =H\bigl(\rho(x)+\rho(y)\bigr).
\end{aligned}
\]
Writing $A=\int_0^\infty h(t)^2\,\dd t$ and
$B=\int_0^\infty h'(t)^2\,\dd t$, the preceding two estimates give
$AB\ge4$ and $B\le H(\rho(x)+\rho(y))$.  Therefore
\begin{equation}\label{eq:rho-edge-bound}
   \frac1{\rho(x)+\rho(y)}
   \le\frac H4\int_0^\infty
      \bigl(Q_tf(x)-Q_tf(y)\bigr)^2\,\dd t
\end{equation}
for every sensitive edge $\{x,y\}$.

Insert \eqref{eq:Poisson-rho} into \eqref{eq:Tfrac-variational}.  Since
$(D_i f)^2$ is the indicator that the $i$th edge at $x$ is sensitive,
\eqref{eq:rho-edge-bound} and $(D_i f)^2\le1$ give
\[
\begin{aligned}
   2\Tfrac(f)^2
   &\le\E\sum_{i=1}^n
      \frac{(D_i f(x))^2}{\rho(x)+\rho(x^{\oplus i})}\\
   &\le\frac H4\int_0^\infty
      \E\sum_{i=1}^n
      \bigl(Q_tf(x)-Q_tf(x^{\oplus i})\bigr)^2\,\dd t\\
   &=H\int_0^\infty\ip{Q_tf}{LQ_tf}\,\dd t
   =\frac{H^2}{2},
\end{aligned}
\]
where the last equality is \eqref{eq:Poisson-energy}.  Thus
$\Tfrac(f)\le H/2$.  Proposition \ref{prop:hierarchy} gives
$\B(f)\le\Tfrac(f)$, and the Fourier identity in
\eqref{eq:strong-spectral-upper} follows from the spectral decomposition
of $L$.  Summation by parts gives \eqref{eq:weak-strong-sandwich}.

Finally, let $f=\chi_S$ with $|S|=d\ge1$.  The choice $\rho\equiv1$ in
\eqref{eq:Tfrac-variational} gives $\Tfrac(f)\le\sqrt d/2$.  Conversely,
for every admissible $\rho$, Jensen's inequality gives
\[
   \E\frac1{\rho(x)+\rho(x^{\oplus i})}
   \ge\frac1{\E\rho(x)+\E\rho(x^{\oplus i})}
   =\frac12
\]
for each $i\in S$.  Hence \eqref{eq:Tfrac-variational} gives
$\Tfrac(f)\ge\sqrt d/2$, proving sharpness.
\end{proof}

\begin{corollary}[Logarithmic reverse Beckmann--fractional comparison]
\label{log-reverse-beckmann-fractional}
For every Boolean function \(f:\Omega_n\to\{-1,1\}\),
\[
   \Tfrac(f)\lesssim \B(f)\log(en).
\]
\end{corollary}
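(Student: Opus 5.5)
Combine the two sides of \eqref{eq:weak-strong-sandwich}. Proposition~\ref{prop:strong-spectral-upper} gives the upper bound
\[
   \Tfrac(f)\le \frac12\sum_{d=1}^n(\sqrt d-\sqrt{d-1})\,W_{\ge d}[f],
\]
and Corollary~\ref{cor:high-degree} gives the weak lower bound $\sqrt d\,W_{\ge d}[f]\lesssim \B(f)$ for every $1\le d\le n$. The plan is to insert the lower bound term by term into the upper bound.

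The key steps, in order, are as follows.

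\textbf{Step 1.} Rewrite the upper bound. Since $\sqrt d-\sqrt{d-1}=1/(\sqrt d+\sqrt{d-1})\le 1/\sqrt d$, each summand satisfies
\[
   (\sqrt d-\sqrt{d-1})\,W_{\ge d}[f]\le \frac1d\cdot \sqrt d\,W_{\ge d}[f].
\]

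\textbf{Step 2.} Bound each factor $\sqrt d\,W_{\ge d}[f]$ by $C\,\B(f)$ using Corollary~\ref{cor:high-degree}. This is legitimate because the corollary holds uniformly in $d\ge1$ for Boolean $f$.

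\textbf{Step 3.} Sum the resulting harmonic series:
\[
   \Tfrac(f)\le \frac C2\,\B(f)\sum_{d=1}^n\frac1d\le \frac C2\,\B(f)\,(1+\log n)\lesssim \B(f)\log(en).
\]
The terms with $d>n$ vanish, since $W_{\ge d}[f]=0$ there. This is what truncates the harmonic sum and produces the $\log(en)$ factor.

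There is essentially no obstacle. The only points to check are that the summation by parts in \eqref{eq:weak-strong-sandwich} is correctly normalized, which follows from $W_{=k}=W_{\ge k}-W_{\ge k+1}$ together with $\sum_{k}\sqrt k\,W_{=k}=\sum_d(\sqrt d-\sqrt{d-1})W_{\ge d}$, and that the constant in Corollary~\ref{cor:high-degree} is independent of $d$. If $f$ is constant, all quantities vanish and the statement is trivial.
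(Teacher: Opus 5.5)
Your proposal is correct and follows the paper's proof essentially verbatim. The paper also inserts the bound $W_{\ge d}[f]\lesssim \B(f)/\sqrt d$ from Corollary~\ref{cor:high-degree} into the summation-by-parts form of \eqref{eq:strong-spectral-upper}. It then uses $(\sqrt d-\sqrt{d-1})/\sqrt d\le 1/d$ and sums the harmonic series to get $\log(en)$.
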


\begin{proof}
By \eqref{eq:strong-spectral-upper} and the summation-by-parts identity
used in \eqref{eq:weak-strong-sandwich},
\[
   \Tfrac(f)
   \le
   \frac12\sum_{d=1}^n
   \bigl(\sqrt d-\sqrt{d-1}\bigr)W_{\ge d}[f].
\]
On the other hand, the left inequality in \eqref{eq:weak-strong-sandwich}
implies, for every \(1\le d\le n\),
\[
   W_{\ge d}[f]\lesssim \frac{\B(f)}{\sqrt d}.
\]
Therefore
\[
   \Tfrac(f)
   \lesssim
   \B(f)
   \sum_{d=1}^n
   \frac{\sqrt d-\sqrt{d-1}}{\sqrt d}.
\]
Finally,
\[
   \frac{\sqrt d-\sqrt{d-1}}{\sqrt d}
   =
   \frac{1}{\sqrt d(\sqrt d+\sqrt{d-1})}
   \le \frac1d,
\]
and hence
\[
   \sum_{d=1}^n
   \frac{\sqrt d-\sqrt{d-1}}{\sqrt d}
   \le
   \sum_{d=1}^n\frac1d
   \le
   \log(en).
\]
The claim follows.
\end{proof}

\section{Proof of the Beckmann--Talagrand inequality}\label{sec:main-proof}

We need two standard Fourier ingredients.  The first is a standard consequence of Bonami--Beckner hypercontractivity \cite{Bonami,Beckner}, equivalently of the usual small-set/level-$d$
Fourier estimates; see   \cite[Chapter 9.5]{ODonnell}.

\begin{lemma}[Low-degree mass from hypercontractivity]\label{lem:low-degree-var}
There are universal constants $c_0,c_1,c_2>0$ such that for every nonconstant Boolean $f:\Omega_n\to\{-1,1\}$ there is an integer
\[
   c_0\log(e/\Var(f)) \le d\le c_1 \log(e/\Var(f))
\]
for which
\[
   W_{\ge d}[f]\ge c_2\Var(f).
\]
Consequently,
\begin{equation}\label{eq:B-classical}
   \B(f)\gtrsim \Var(f)\sqrt{\log(e/\Var(f))}.
\end{equation}
\end{lemma}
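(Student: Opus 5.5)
The plan is to deduce the low-degree concentration from the level-$d$ inequality, which is a consequence of hypercontractivity, and then combine it with Corollary~\ref{cor:high-degree}.

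Write $\varepsilon:=\Var(f)$. Since $f$ is $\pm1$-valued, $\varepsilon=1-\widehat f(\varnothing)^2$. Put $\mu=\Pr[f=1]$, so $\varepsilon=4\mu(1-\mu)$. After replacing $f$ by $-f$ if necessary, which changes neither side, we may assume $\mu\le1/2$, so that $\varepsilon\asymp\mu$.

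Apply the level-$k$ inequality to $g=\1_{\{f=1\}}$, which has mean $\mu$ (see \cite[Chapter 9.5]{ODonnell}):
\[
   W_{\le k}[g]\le \left(\frac{2e}{k}\log(1/\mu)\right)^{k}\mu^2
   \qquad\text{for } k\le 2\log(1/\mu).
\]
We also use its crude form $W_{=1}[g]\lesssim\mu^2\log(e/\mu)$, which covers the regime $\mu$ close to $1/2$. The relation $f=2g-1$ gives $\widehat f(S)=2\widehat g(S)$ for $S\ne\varnothing$, so $W_{\le k}[f]=4W_{\le k}[g]$.

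Now choose $d-1=k=\lfloor c\log(e/\varepsilon)\rfloor$ with $c$ small. The factor $(C\log(1/\mu)/k)^k$ is then at most $(C/c)^{c\log(e/\varepsilon)}=(e/\varepsilon)^{c\log(C/c)}$. Taking $c$ small makes $c\log(C/c)\le1/2$, so
\[
   W_{\le k}[f]\lesssim \mu^2(e/\varepsilon)^{1/2}\lesssim \varepsilon^{3/2}.
\]
This is at most $\varepsilon/2$ once $\varepsilon$ is below a fixed absolute threshold $\varepsilon_0$. Then $W_{\ge d}[f]=\varepsilon-W_{\le k}[f]\ge\varepsilon/2$.

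When $\varepsilon\ge\varepsilon_0$, take $d=1$ instead. Then $W_{\ge1}[f]=\varepsilon$ trivially, and $\log(e/\varepsilon)\le\log(e/\varepsilon_0)$, so the two-sided constraint on $d$ holds after adjusting $c_0,c_1$. In the small-$\varepsilon$ case we need $d\ge c_0\log(e/\varepsilon)$, so $k$ must be at least $1$. This holds once $\varepsilon_0$ is small enough that $c\log(e/\varepsilon_0)\ge1$. In both cases $d\le c_1\log(e/\varepsilon)$ is immediate.

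For the consequence \eqref{eq:B-classical}, plug this $d$ into Corollary~\ref{cor:high-degree}:
\[
   \B(f)\gtrsim \sqrt d\,W_{\ge d}[f]\gtrsim \sqrt{\log(e/\varepsilon)}\,\varepsilon.
\]

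The main obstacle is bookkeeping in the hypercontractive step: choosing $c$ so that $(C/c)^{c}$ gives a power saving in $\varepsilon$ while keeping $k\le2\log(1/\mu)$. The route is otherwise a direct application of standard facts.
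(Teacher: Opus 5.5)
Your proposal is correct and takes the same route as the paper: the paper only cites the hypercontractive level-$k$ inequality \cite[Chapter 9.5]{ODonnell} to get such a $d$ and then applies Corollary~\ref{cor:high-degree}, and you have filled in the omitted bookkeeping (the case split at $\varepsilon_0$, and the bound on $(C\log(1/\mu)/k)^k$, which implicitly uses that $x\mapsto (B/x)^x$ is increasing for $x\le B/e$). The crude level-one bound you mention is never actually used, since the regime $\mu$ near $1/2$ is already handled by taking $d=1$.
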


The second ingredient is the Keller--Kindler quantitative Fourier-tail theorem \cite{KellerKindler}, in the form recorded in \cite[Theorem~3.4]{EKLM}.

\begin{theorem}[Keller--Kindler Fourier tail]\label{thm:KK}
There exist constants $a_1,a_2>0$ such that for every Boolean function $f$ with
$M(f)=\sum_i\Inf_i(f)^2$ sufficiently small,
\begin{equation}\label{eq:KK-tail}
   W_{\le a_1\log(1/M(f))}[f]
   \le
   M(f)^{a_2}.
\end{equation}
\end{theorem}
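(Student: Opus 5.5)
Theorem~\ref{thm:KK} is quoted from \cite{KellerKindler} in the form of \cite[Theorem~3.4]{EKLM}, and in the paper we would simply invoke it. If a self-contained argument were wanted, I would run hypercontractivity on the derivatives $D_if$. These take values in $\{0,\pm1\}$ and have $\|D_if\|_q^q=\Inf_i(f)$ for every $q$.

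\emph{Step 1: reduce to the derivatives.} Set $d=a_1\log(1/M(f))$ and let $P_{\le d}$ be the projection onto nonconstant levels at most $d$. Since every $S\ne\varnothing$ contains some $i$,
\[
   W_{\le d}[f]\le \sum_{1\le|S|\le d}|S|\,\widehat f(S)^2
   =\sum_i\ip{D_if}{P_{\le d}D_if}=:T .
\]

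\emph{Step 2: H\"older plus hypercontractivity.} Fix $q\in(1,2)$ with dual exponent $p$. H\"older gives
\[
   \ip{D_if}{P_{\le d}D_if}\le \|D_if\|_q\,\|P_{\le d}D_if\|_p .
\]
The Bonami--Beckner bound for degree-$d$ polynomials gives $\|P_{\le d}D_if\|_p\le (p-1)^{d/2}\|P_{\le d}D_if\|_2$. Applying Cauchy--Schwarz over $i$ and absorbing $T^{1/2}$ yields
\[
   T\le (p-1)^d\sum_i\Inf_i(f)^{2/q}.
\]
Equivalently, one can use the level-$d$ inequality $W_{\le d}[g]\le \mu^2\,(C\log(1/\mu)/d)^d$ for $g=D_if$ and $\mu=\Inf_i(f)\le\sqrt{M(f)}$. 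This is valid because $d\le 2\log(1/\Inf_i)$ once $a_1\le1$. It gives per-coordinate bounds of the form $\Inf_i^2\cdot M^{-O(a_1\log(1/a_1))}$ for coordinates whose influence is not much smaller than $\sqrt M$.

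\emph{Step 3: sum over coordinates.} This is the main obstacle. The exponent $2/q$ is strictly less than $2$, so $\sum_i\Inf_i^{2/q}$ is not controlled by $M(f)$ alone: many coordinates with tiny influence could make it large. I would group coordinates dyadically by influence, $\Inf_i\in(2^{-k-1},2^{-k}]$. For groups with $2^{-k}\ge M^{C}$, I would use the level-$d$ bound, whose loss is only $(Ck/d)^d$. Summed, these groups give at most $M\cdot M^{-O(a_1\log(1/a_1))}$, which is at most $M^{a_2}$ after choosing $a_1$ small.

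For the very-low-influence coordinates I would not bound each term separately. Instead I would apply Step~2 to the function $\sum_{i\in G}D_if$ collectively over the group $G$ of such coordinates. The aim is to trade the count of these coordinates against the factor $(p-1)^d=M^{-O(a_1)}$ and the gain $\Inf_i^{2/q-1}\le M^{C(2/q-1)}$, choosing $q$ close to $2$ relative to $C$.

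If the total influence of this group cannot be controlled this way, I would fall back on the original Keller--Kindler mechanism. That mechanism passes to a random restriction, or to the noisy function $P_\epsilon f$ with $\epsilon\asymp 1/d$. Each coordinate's contribution is then measured by $\|D_iP_\epsilon f\|_2^2$ rather than by $\Inf_i$, and the tiny-influence coordinates contribute only through their square-summable part. Closing this last step is where I expect the real work to lie.
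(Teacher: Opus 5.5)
Citing Keller--Kindler is exactly the paper's route. However, you skip the one thing the paper actually verifies, namely the change of normalization. The cited form concerns $g=(1+f)/2\colon\Omega_n\to\{0,1\}$ with flip-probability influences, so $I_i(g)=\Inf_i(f)$ while $W_{\le d}[g]=\tfrac14W_{\le d}[f]$. The citation therefore gives $W_{\le d}[f]\le 4M(f)^c$. The factor $4$ is absorbed by taking $a_2=c/2$ and shrinking the smallness threshold so that $4M^c\le M^{c/2}$.

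Your self-contained argument is not a proof. Step~3 breaks down precisely on the very-low-influence coordinates, and that case is the whole content of the theorem. Every bound your method yields involves a quantity not controlled by $M$:
\begin{itemize}
\item \emph{Per coordinate.} Step~2 or the level-$d$ inequality gives at best $\Inf_i^2\,(C\log(1/\Inf_i)/d)^d$. The loss is governed by $\log(1/\Inf_i)$, not $\log(1/M)$.
\item \emph{Collectively.} For $g=\sum_{i\in G}D_if$ one has $|g(x)|=\#\{i\in G: f(x)\ne f(x^{\oplus i})\}$. Hence $(p-1)^d\|g\|_q^2\ge\bigl(\sum_{i\in G}\Inf_i(f)\bigr)^2$, the squared total influence of $G$.
\end{itemize}
Neither $|G|$ nor this total influence is bounded in terms of $M$. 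A parity of many disjoint copies of tribes has all influences equal to some $\epsilon\asymp\log N/N\to0$, while $M=n\epsilon^2$ stays of a fixed small size. The total influence is then $M/\epsilon\to\infty$, and the per-coordinate bound is $M(C\log(1/\epsilon)/d)^d\to\infty$, even though $W_{\le d}[f]$ is in fact negligible.

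Consequently no fixed power $M^{C(2/q-1)}$ can pay for ``the count.'' Moreover, taking $q$ close to $2$ sends $2/q-1\to0$ and destroys that gain, while a genuine gain needs $q$ near $1$, where $(p-1)^d$ explodes. The fallback through $P_\epsilon f$ or random restrictions is only named. Since $\sum_i\|D_iP_\epsilon f\|_2^2=\sum_S|S|e^{-2\epsilon|S|}\widehat f(S)^2$ is essentially the quantity you set out to bound, passing to it is circular without a new inequality.

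What Steps~1--3 do establish is the theorem under an extra hypothesis that tames tiny influences. Examples are $\Inf_i(f)\ge M^{C}$ for every relevant coordinate, or the $n$-dependent assumption $M\le n^{-c}$. Removing such a hypothesis requires a level-$d$ estimate, summed over coordinates, whose logarithmic loss is $\log(1/M)$ uniformly rather than $\log(1/\Inf_i)$. This is what Keller--Kindler supply, via their generalization of a lemma of Talagrand, and it is why the paper cites the result rather than reproving it.
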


The cited formulation is commonly stated for a function $g:\Omega_n\to\{0,1\}$ using the flip-probability influence
\[
   I_i(g):=\Pr[g(x)\ne g(x^{\oplus i})].
\]
To pass to our convention, set $g=(1+f)/2$.  Then
\[
   I_i(g)=\Inf_i(f).
\]
Notice that this is not the same as $\E|D_i g|$: with our difference operator, $D_i g=D_i f/2$ and hence $\E|D_i g|=\Inf_i(f)/2$.  For every nonempty $S$,
\[
   \widehat g(S)=\frac12\widehat f(S),
\]
so
\[
   W_{\le d}[g]=\frac14W_{\le d}[f].
\]
Thus the cited theorem first gives $W_{\le d}[f]\le4M(f)^c$ for some universal $c>0$.  Replacing $c$ by $a_2=c/2$ and decreasing the smallness threshold for $M$ so that $4M^c\le M^{c/2}$ yields Theorem \ref{thm:KK} exactly as stated.

We now prove Theorem \ref{thm:main}.

\begin{proof}[Proof of Theorem \ref{thm:main}]
Let $v=\Var(f)$ and $M=M(f)$.  Fix the smallness threshold in Theorem \ref{thm:KK} sufficiently small that, whenever it applies,
\[
   d=\lfloor a_1\log(1/M)\rfloor\ge1
   \qquad\text{and}\qquad
   d\asymp \log(1/M).
\]
If $M$ is bounded below by that absolute threshold, then
\[
   v\sqrt{\log(1+1/M)}\lesssim v,
\]
and Lemma \ref{lem:low-degree-var} gives $\B(f)\gtrsim v$.  Thus we may assume that $M$ is small enough for Theorem \ref{thm:KK} and, in particular, $M\le1/2$.

There are two cases.  First suppose
\[
   M^{a_2}>\frac12v.
\]
Then $M>(v/2)^{1/a_2}$, and therefore
\[
   \frac1M<\left(\frac2v\right)^{1/a_2}.
\]
It follows that
\[
\begin{aligned}
   \log(1+1/M)
   \le \log\left(1+(2/v)^{1/a_2}\right)
   \le C_{a_2}\log(e/v).
\end{aligned}
\]
Lemma \ref{lem:low-degree-var} consequently gives
\[
   \B(f)
   \gtrsim
   v\sqrt{\log(e/v)}
   \gtrsim
   v\sqrt{\log(1+1/M)}.
\]

Now suppose
\[
   M^{a_2}\le \frac12v.
\]
Set
\[
   d=\lfloor a_1\log(1/M)\rfloor.
\]
By Theorem \ref{thm:KK},
\[
   W_{\le d}[f]\le\frac12v.
\]
Since the total nonconstant Fourier mass is $v$,
\[
   W_{>d}[f]\ge\frac12v.
\]
Because $W_{>d}[f]=W_{\ge d+1}[f]$, Corollary \ref{cor:high-degree} yields
\[
   \B(f)
   \gtrsim
   \sqrt{d+1}\,W_{>d}[f]
   \gtrsim
   v\sqrt{\log(1/M)}.
\]
Finally, when $0<M\le1/2$,
\[
   \log(1/M)\le\log(1+1/M)\le\log(2/M)\le2\log(1/M),
\]
so $\log(1+1/M)\asymp\log(1/M)$.  This gives \eqref{eq:main} in the second case and completes the proof.
\end{proof}

\begin{corollary}[Colored, fractional and one-sided consequences]\label{cor:consequences}
For every nonconstant Boolean $f:\Omega_n\to\{-1,1\}$,
\[
   \Tfrac(f)
   \ge
   c\,\Var(f)
   \sqrt{\log\!\left(1+\frac{1}{M(f)}\right)}.
\]
The same lower bound holds for $\Ccol(f)$ and for $\Tcol^\chi(f)$ for every red--blue coloring $\chi$.  In particular, if $\A=\{f=1\}$, then
\[
   \E\sqrt{h_{\A}}
   \ge
   c\,\Var(f)
   \sqrt{\log\!\left(1+\frac{1}{M(f)}\right)}.
\]
\end{corollary}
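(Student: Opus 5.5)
This corollary is a direct combination of Theorem \ref{thm:main} with the hierarchy in Proposition \ref{prop:hierarchy}; no new analytic input is needed.

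Fix a nonconstant Boolean $f$. Theorem \ref{thm:main} gives
\[
   \B(f)\ge c\,\Var(f)\sqrt{\log(1+1/M(f))}
\]
with a universal $c>0$. Proposition \ref{prop:hierarchy} gives
\[
   \B(f)\le\Tfrac(f)\le\Ccol(f)\le\Tcol^\chi(f)
\]
for every red--blue coloring $\chi$. Chaining the two displays yields the stated lower bound for $\Tfrac(f)$, for $\Ccol(f)$, and for each $\Tcol^\chi(f)$, all with the same constant $c$ as in Theorem \ref{thm:main}.

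For the one-sided statement, take $\chi$ to be the all-red coloring of $E_f$. In that case $s_{f,{\rm blue}}\equiv0$, and $s_{f,{\rm red}}(x)=h_{\A}(x)$ for $\A=\{f=1\}$, so $\Tcol^\chi(f)=\E\sqrt{h_{\A}}$. Equivalently, one may invoke directly the inequality $\B(f)\le\E\sqrt{h_{\{f=1\}}}$ recorded in Proposition \ref{prop:hierarchy}. The same argument with the all-blue coloring gives the bound for $\A^c$.

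The only point to check is that all the quantities are defined with the same normalization as in the paper, namely uniform measure and $\Inf_i(f)=\Pr[f(x)\ne f(x^{\oplus i})]$. This holds because the hierarchy was proved for exactly these definitions, so there is no real obstacle.
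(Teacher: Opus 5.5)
Your proposal is correct and matches the paper's proof, which likewise obtains the corollary by chaining Theorem~\ref{thm:main} with the hierarchy $\B(f)\le\Tfrac(f)\le\Ccol(f)\le\Tcol^\chi(f)$ of Proposition~\ref{prop:hierarchy}. The one-sided bound then follows from the all-red coloring, exactly as you describe.
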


\begin{proof}
This is immediate from Theorem \ref{thm:main} and Proposition \ref{prop:hierarchy}.
\end{proof}

\section{Duality and comparison examples}\label{sec:examples}

This section records concrete examples showing that $\B(f)$ is a genuinely smaller object than edge-local colorings.

\subsection{Dual formulation}
The following dual form is useful for exact computations.

\begin{proposition}[Dual Beckmann formulation]\label{prop:dual}
For every Boolean $f$,
\begin{equation}\label{eq:dual}
   \B(f)=
   \sup\left\{
      \E\phi(x)Lf(x):
      2\|D\phi(x)\|_2\le1\ \text{for all }x
   \right\}.
\end{equation}
\end{proposition}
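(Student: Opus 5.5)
The formula is finite-dimensional convex (LP-type) duality: we minimize a norm subject to a linear constraint, and the dual is a sup over Lagrange multipliers. I will prove weak duality directly and then get strong duality from a finite-dimensional duality theorem. Strong duality holds here because the feasible set is nonempty (for instance $\V_i=f/2$) and the objective is a finite convex function on all of $(\R^n)^{\Omega_n}$.

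\emph{Weak duality.} Fix an admissible $\V$ with $\divg\V=Lf$, and a $\phi$ with $2\|D\phi(x)\|_2\le1$ for all $x$. The integration-by-parts identity from the proof of Theorem~\ref{thm:beckmann-Poincar\'e}, applied with $P_sf$ replaced by $\phi$, gives
\[
\E\phi\, Lf=\ip{\divg\V}{\phi}=2\sum_i\ip{\V_i}{D_i\phi}.
\]
By Cauchy--Schwarz in the coordinate index,
\[
2\sum_i\ip{\V_i}{D_i\phi}\le \E\bigl[\|\V(x)\|_2\cdot 2\|D\phi(x)\|_2\bigr]\le\E\|\V\|_2 .
\]
Taking the infimum over $\V$ and the supremum over $\phi$ shows that the dual value is at most $\B(f)$.

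\emph{Strong duality.} Write
\[
\B(f)=\inf_{\V}\sup_{\phi}\Bigl(\E\|\V\|_2+\E\phi\,(Lf-\divg\V)\Bigr).
\]
The Lagrangian is convex in $\V$ and linear in $\phi$. Using the adjoint identity $\ip{\divg\V}{\phi}=\ip{\V}{2D\phi}$, it equals
\[
\E\phi\,Lf+\E\bigl(\|\V\|_2-\ip{\V(x)}{2D\phi(x)}\bigr).
\]
Minimizing over $\V$ pointwise gives $0$ if $\|2D\phi(x)\|_2\le1$ for every $x$, and $-\infty$ otherwise, since the conjugate of the Euclidean norm is the indicator of the unit ball. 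So the Lagrangian dual is exactly the right-hand side of \eqref{eq:dual}.

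To exchange inf and sup, I will invoke Fenchel--Rockafellar duality for $\inf_{\V}\{F(\V)+G(\divg\V)\}$, where $F(\V)=\E\|\V\|_2$ and $G$ is the indicator of $\{Lf\}$. The constraint qualification is satisfied because $F$ is finite and continuous everywhere and the problem is feasible. Polyhedrality (after introducing epigraph variables, the problem is a second-order cone program) also suffices.

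One point to check is that the affine constraint is consistent. The range of $\divg$ is the orthogonal complement of $\ker(D)$, which consists of the constants, so $\divg$ maps onto the mean-zero functions. Since $\E Lf=0$, feasibility holds, as the explicit field $\V_i=f/2$ already shows. The same fact explains why adding a constant to $\phi$ leaves the dual objective unchanged.

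The main obstacle is justifying the absence of a duality gap. Since everything is finite-dimensional and the primal objective is continuous and real-valued, Slater-type conditions for the equality-constrained problem hold trivially, so I expect this step to be routine once cast as a conic program. The supremum is also attained: after normalizing $\E\phi=0$, the dual feasible set is compact because $D\phi$ is bounded and $\phi$ is determined by $D\phi$ up to a constant.
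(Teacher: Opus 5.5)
Your proposal is correct and takes essentially the same route as the paper. Both compute the adjoint $A^*\phi=2D\phi$, minimize the Lagrangian pointwise using the self-duality of the Euclidean norm, and rule out a duality gap by finite-dimensional Fenchel--Rockafellar with the continuity qualification (the paper cites Rockafellar's Theorem~31.2); your explicit weak-duality step and attainment remark are harmless extras. One small correction: a second-order cone program is not polyhedral, so drop the ``polyhedrality'' alternative and rely on the continuity qualification, which suffices.
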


The proof is recalled in Appendix \ref{app:duality}.

\subsection{A fixed coloring or one-sided boundary can be larger by \texorpdfstring{$\sqrt n$}{sqrt(n)}}
Let $o=(-1,\ldots,-1)$ and define
\[
   f_n(o)=-1,
   \qquad
   f_n(x)=1\quad\text{for }x\ne o.
\]
Then the sensitivity graph consists of the $n$ edges adjacent to $o$.

\begin{proposition}\label{prop:singleton}
For the above function,
\[
   \B(f_n)=\frac{\sqrt n}{2^n}.
\]
On the other hand, for the one-sided boundary of $\{f_n=1\}$, equivalently for the all-red coloring,
\[
   \E\sqrt{h_{\{f_n=1\}}}=\frac{n}{2^n}.
\]
Thus
\[
   \frac{\E\sqrt{h_{\{f_n=1\}}}}{\B(f_n)}=\sqrt n.
\]
The same example also gives a fixed fractional coloring $\theta\equiv1$ for which
$\Ttheta(f_n)/\B(f_n)=\sqrt n$.
\end{proposition}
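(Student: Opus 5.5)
The plan is to compute $\B(f_n)$ exactly by matching an explicit feasible field (upper bound) with an explicit test function in the weak direction of the dual formulation of Proposition~\ref{prop:dual} (lower bound). The one-sided quantity is then a direct count.

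First I record $Lf_n$. At $o$, every $D_if_n(o)=(f_n(o)-f_n(o^{\oplus i}))/2=-1$, so $Lf_n(o)=-n$. At a neighbor $o^{\oplus i}$, only the $i$th edge is sensitive, so $Lf_n(o^{\oplus i})=1$. Everywhere else $Lf_n=0$.

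\emph{Upper bound.} Take the field supported at $o$ given by $\V_i(o)=-1$ for all $i$ and $\V\equiv0$ elsewhere. By \eqref{eq:div-def}, $\divg\V(o)=\sum_i(\V_i(o)-\V_i(o^{\oplus i}))=-n$. At $o^{\oplus i}$ only the $i$th term survives, giving $\V_i(o^{\oplus i})-\V_i(o)=1$. All other vertices give $0$. Hence $\divg\V=Lf_n$ and $\E\|\V\|_2=\sqrt n/2^n$. This field is exactly $\V^\theta$ for the fractional coloring $\theta\equiv0$ in the proof of Proposition~\ref{prop:hierarchy}, i.e.\ all charge placed at the negative endpoint.

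\emph{Lower bound.} I only need weak duality, which follows from the integration by parts in the proof of Theorem~\ref{thm:beckmann-Poincar\'e}. For any admissible $\V$ and any $\phi$,
\[
   \E\phi\,Lf_n=\ip{\divg\V}{\phi}=2\sum_i\ip{\V_i}{D_i\phi}\le \E\|\V\|_2\,\sup_x 2\|D\phi(x)\|_2 .
\]
Choose $\phi(x)=\sum_i x_i/(2\sqrt n)$. Then $D_i\phi=x_i/(2\sqrt n)$, so $2\|D\phi\|_2\equiv1$. Moreover
\[
   \E\phi\,Lf_n=2^{-n}\sum_i\bigl(\phi(o^{\oplus i})-\phi(o)\bigr)=2^{-n}\,n\cdot\frac{1}{\sqrt n}=\frac{\sqrt n}{2^n}.
\]
Taking the infimum over $\V$ gives $\B(f_n)\ge\sqrt n/2^n$, hence equality. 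The only real step is guessing this linear $\phi$; it is natural because the optimal field is the constant vector $-(1,\dots,1)$ at $o$, and the dual gradient should align with it.

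\emph{One-sided and fixed-coloring quantities.} $h_{\{f_n=1\}}$ equals $1$ at each of the $n$ points $o^{\oplus i}$ and vanishes elsewhere, so $\E\sqrt{h_{\{f_n=1\}}}=n/2^n$, and the ratio with $\B(f_n)$ is $\sqrt n$. For $\theta\equiv1$, every sensitive edge is assigned entirely to its positive endpoint $o^{\oplus i}$. Each such vertex then contributes $\sqrt{1}$ and $o$ contributes $0$, so $\Ttheta(f_n)=n/2^n$ and $\Ttheta(f_n)/\B(f_n)=\sqrt n$.
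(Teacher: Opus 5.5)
Your proof is correct and follows the paper's argument: the same field $\V_i(o)=-1$ gives the upper bound, and your test function $\phi(x)=\sum_i x_i/(2\sqrt n)$ is exactly the paper's $\dist(x,o)/\sqrt n$ minus the constant $\sqrt n/2$, which leaves $\E\phi\,Lf_n$ unchanged since $\E Lf_n=0$. Deriving only weak duality via integration by parts and Cauchy--Schwarz, instead of citing Proposition~\ref{prop:dual}, is a harmless and slightly more self-contained variation.
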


\begin{proof}
For the upper bound on $\B(f_n)$, define $\V_i(o)=-1$ for every $i$ and set all other components equal to $0$.  Then $\divg \V=Lf_n$.  Indeed, $Lf_n(o)=-n$, each neighbor of $o$ has $Lf_n=1$, and all other vertices have $Lf_n=0$.  The cost is
\[
   \E\|\V\|_2=\frac{\sqrt n}{2^n}.
\]

For the matching lower bound, use the dual formulation.  Let
\[
   \phi(x)=\frac{\dist(x,o)}{\sqrt n}.
\]
Flipping one coordinate changes $\phi$ by $1/\sqrt n$, hence
\[
   2\|D\phi(x)\|_2\le1
\]
for every $x$.  Therefore $\phi$ is dual-admissible.  Since $Lf_n(o)=-n$, $Lf_n=1$ on the $n$ neighbors of $o$, and $\phi(o)=0$, $\phi=1/\sqrt n$ on those neighbors,
\[
   \E\phi Lf_n=\frac{\sqrt n}{2^n}.
\]
This proves $\B(f_n)=\sqrt n/2^n$.

The one-sided boundary of $\{f_n=1\}$ consists of the $n$ neighbors of $o$, each with one outgoing edge.  Hence
\[
   \E\sqrt{h_{\{f_n=1\}}}=\frac{n}{2^n}.
\]
The all-red coloring and the fixed fractional coloring $\theta\equiv1$ give the same cost.
\end{proof}

\begin{remark}
Replacing $f_n$ by $-f_n$ reverses the roles of the two sides of the cut.  Thus the two one-sided boundaries are not comparable to each other by dimension-free constants.  The Beckmann boundary is below both.
\end{remark}

\subsection{A fractional packing lower bound}
The next elementary dual bound for fractional endpoint assignments will be used in the quotient-cube construction.

\begin{lemma}[Fractional packing]\label{lem:fractional-packing}
Suppose nonnegative numbers $(\lambda_e)_{e\in E_f}$ satisfy
\[
   \sum_{e\ni x}\lambda_e^2\le1
   \qquad\text{for every }x\in\Omega_n.
\]
Then
\[
   \Tfrac(f)\ge 2^{-n}\sum_{e\in E_f}\lambda_e.
\]
\end{lemma}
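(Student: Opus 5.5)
Fix an arbitrary fractional coloring $\theta:E_f\to[0,1]$ and bound $\Ttheta(f)$ below by $2^{-n}\sum_e\lambda_e$; taking the infimum over $\theta$ then gives the lemma. Write $a_{x,e}$ for the fraction of $e$ assigned to the endpoint $x$, as in the proof of Proposition~\ref{prop:strong-spectral-upper}. Thus $a_{x,e}=\theta_e$ at the positive endpoint, $a_{y,e}=1-\theta_e$ at the negative endpoint, and $a_{x,e}+a_{y,e}=1$ for every $e=\{x,y\}\in E_f$. In this notation
\[
   2^n\Ttheta(f)=\sum_{x\in\Omega_n}\Bigl(\sum_{e\ni x}a_{x,e}^2\Bigr)^{1/2}.
\]

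The key step is a vertexwise Cauchy--Schwarz inequality. For each $x$ the hypothesis $\sum_{e\ni x}\lambda_e^2\le1$ gives
\[
   \Bigl(\sum_{e\ni x}a_{x,e}^2\Bigr)^{1/2}
   \ge\Bigl(\sum_{e\ni x}a_{x,e}^2\Bigr)^{1/2}\Bigl(\sum_{e\ni x}\lambda_e^2\Bigr)^{1/2}
   \ge\sum_{e\ni x}\lambda_e a_{x,e},
\]
using $\lambda_e\ge0$ and $a_{x,e}\ge0$. Summing over $x$ and regrouping by edges, each sensitive edge $e=\{x,y\}$ appears once at each endpoint. Its total contribution is therefore $\lambda_e(a_{x,e}+a_{y,e})=\lambda_e$. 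This yields $2^n\Ttheta(f)\ge\sum_{e\in E_f}\lambda_e$.

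Dividing by $2^n$ and taking the infimum over $\theta$ proves the claim. There is no real obstacle here. The only points to check are that the regrouping counts each edge exactly at its two endpoints and that the endpoint fractions sum to one. Conceptually, this is the dual of the Beckmann problem restricted to sensitive-edge fields.
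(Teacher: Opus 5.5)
Your proposal is correct and is essentially the paper's proof: rewrite $2^n\Ttheta(f)$ through the endpoint fractions $a_{x,e}$, apply Cauchy--Schwarz at each vertex using $\sum_{e\ni x}\lambda_e^2\le1$, and regroup by edges using $a_{x,e}+a_{y,e}=1$. Dividing by $2^n$ and taking the infimum over $\theta$ finishes it exactly as you say.
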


\begin{proof}
For a fractional coloring, write $a_{x,e}\ge0$ for the weight assigned to endpoint $x$ of a sensitive edge $e=\{x,y\}$, so $a_{x,e}+a_{y,e}=1$.  Then
\[
\begin{aligned}
   \sum_{e\in E_f}\lambda_e
   &=\sum_{x\in\Omega_n}\sum_{e\ni x}\lambda_ea_{x,e}\\
   &\le\sum_{x\in\Omega_n}
      \left(\sum_{e\ni x}\lambda_e^2\right)^{1/2}
      \left(\sum_{e\ni x}a_{x,e}^2\right)^{1/2}\\
   &\le\sum_{x\in\Omega_n}
      \left(\sum_{e\ni x}a_{x,e}^2\right)^{1/2}.
\end{aligned}
\]
Divide by $2^n$ and take the infimum over all fractional colorings.
\end{proof}

\subsection{An unbounded separation from optimized colorings}
We now give a sequence for which even the optimized colored boundary is larger than the Beckmann boundary by an unbounded factor.

\begin{theorem}[Unbounded reverse separation]\label{thm:unbounded-separation}
For every $k\ge1$ there is a Boolean function $f_k$ on a cube of dimension
\[
   n_k=\frac{2^{k+2}(4^k-1)}{3}
\]
such that
\begin{equation}\label{eq:unbounded-ratio}
   \frac{\Tfrac(f_k)}{\B(f_k)}
   \ge \frac{3}{4\sqrt6}\sqrt{k}.
\end{equation}
Consequently,
\[
   \frac{\Ccol(f_k)}{\B(f_k)}
   \ge \frac{3}{4\sqrt6}\sqrt{k}
   \gtrsim \sqrt{\log n_k}\longrightarrow\infty.
\]
In particular, there is no dimension-free constant $C$ such that
$\Ccol(f)\le C\B(f)$ for every Boolean function $f$.
\end{theorem}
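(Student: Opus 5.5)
The plan is to build $f_k$ on a product of $k$ blocks of coordinates, with block $j\in\{0,\dots,k-1\}$ of size $m_j=2^{k+2}4^j$. Then $\sum_j m_j=2^{k+2}(4^k-1)/3=n_k$, which explains the stated dimension. I would take $f_k=g\circ\pi$, where $\pi$ collapses each block to a coarse statistic, for instance a parity or a small quotient cube. The intended effect is that the $k$ scales contribute sensitive edges at comparable rates.

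The guiding heuristic is an $\ell^1$ versus $\ell^2$ gap. Endpoint assignments are edge-local, so at a boundary vertex the fractional cost over the scales should add up like $\sum_j a_j$. The Beckmann field may route through nonsensitive edges and cancel charge, as Remark~\ref{rem:endpoint-fields} allows, so it should only pay about $(\sum_j a_j^2)^{1/2}$. With $k$ balanced scales this gives the ratio $\sqrt k$.

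\emph{Upper bound on $\B(f_k)$.} I would write down an explicit field $\V$ with $\divg \V=Lf_k$. This is done blockwise: inside block $j$ I put a field that spreads the boundary charge of that scale uniformly over the $m_j$ directions of the block, as in the singleton example of Proposition~\ref{prop:singleton}, where concentrating charge on one vertex gave cost $\sqrt n$ rather than $n$. Because the blocks use disjoint coordinate sets, the components from different scales are orthogonal in $\R^{n_k}$. Hence $\|\V(x)\|_2^2$ is a sum of squares, and $\E\|\V\|_2$ can be computed scale by scale. The constants $2^{k+2}$ and $4^j$ are presumably tuned so that each scale contributes the same amount $a$ to the square, giving $\B(f_k)\le \sqrt{k}\,a\cdot(\text{measure of the support})$. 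Verifying $\divg\V=Lf_k$ is a local check on each type of vertex in the quotient.

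\emph{Lower bound on $\Tfrac(f_k)$.} I would use the fractional packing Lemma~\ref{lem:fractional-packing}. I choose weights $\lambda_e$ that depend only on the scale $j$ of the sensitive edge $e$, namely $\lambda_e=\lambda_j$, normalized so that $\sum_{e\ni x}\lambda_e^2\le1$ at every vertex. If a vertex has $s_j(x)$ sensitive edges at scale $j$, the natural choice is $\lambda_j\approx (k\,s_j)^{-1/2}$. Then $2^{-n}\sum_e\lambda_e$ collects $\sum_j s_j\lambda_j\approx \sum_j\sqrt{s_j/k}$ per vertex. With balanced scales this is $\sqrt{k}$ times the per-scale contribution, which beats the Beckmann bound by a factor of order $\sqrt k$ once the measures are matched. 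Tracking constants should give $\frac{3}{4\sqrt6}\sqrt k$. The corollary for $\Ccol$ then follows from Proposition~\ref{prop:fractional-integral-comparable} or~\ref{prop:hierarchy}, since $\Ccol\ge\Tfrac$, together with $k\asymp\log n_k$.

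\emph{Main obstacle.} The delicate step is designing $\pi$ and $g$ so that two things hold at once. First, the sensitive edges at each vertex really are split evenly across all $k$ scales; otherwise the packing bound degenerates to the largest scale. Second, the Beckmann field can cancel across scales without creating divergence mismatches at vertices where the counts $s_j$ differ. I would first try a layered construction in which scale $j$ is a $4^j$-fold refinement of scale $j-1$, and check the divergence equation on the quotient graph. If exact balance fails, I would fall back on the dual formulation, Proposition~\ref{prop:dual}, to compute $\B$ exactly on the quotient, as in Proposition~\ref{prop:singleton}.
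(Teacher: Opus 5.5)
Your proposal has a genuine gap. It never specifies $f_k$ (neither $\pi$ nor $g$), so neither bound is proved and the constant $\frac{3}{4\sqrt6}$ is not derived. Worse, the guiding heuristic misidentifies where the separation comes from. A fractional coloring does not pay $\sum_j a_j$ at a vertex: $\Ttheta$ charges the Euclidean norm $(\sum_{e\ni x}\theta_e^2)^{1/2}$ there. So if sensitive edges of all $k$ scales meet the same vertices, colorings already get the $\ell^2$ combination. Your own packing step shows this. The normalization $\lambda_j\approx(ks_j)^{-1/2}$, forced because every vertex sees all $k$ scales, gives $\sum_j\sqrt{s_j/k}\le(\sum_j s_j)^{1/2}$ per vertex. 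That is $\sqrt k$ times the per-scale amount, the same order as the bound $\sqrt k\,a$ you expect for $\B$, so no $\sqrt k$ ratio comes out. Likewise, orthogonality of block components holds for any field. What matters is \emph{where} the fields of different scales live, and a scale-$j$ field placed near the scale-$j$ sensitive edges is essentially a fractional coloring.

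The separation needs the opposite arrangement, which is what the paper builds. Take $G_k=\mathbb F_2^k$ and let $E_r$ be the set of $z$ whose first nonzero coordinate is $r$, so $m_r=|E_r|=2^{k-r}$ and $E_{k+1}=\{0\}$. Set $g_k=(-1)^r$ on $E_r$, give each label $a\ne0$ multiplicity $w_{\ell(a)}=8^{\ell(a)}$, and put $f_k=g_k\circ\pi_k$ with $\pi_k(x)=\sum x_{a,s}a$. Your block sizes $2^{k+2}4^{j}$ are exactly $m_rw_r$ with $j=r-1$, but the balancing uses $m_r$ and $w_r$ separately.

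\emph{Lower bound.} The sensitive edges of scale $r$ join the fibers over $E_r$ and $E_{r+1}$. Each vertex meets at most two such selected scales, so Lemma~\ref{lem:fractional-packing} applies with $\lambda_r=(3m_{r+1}w_r)^{-1/2}$. The normalization is the absolute constant $\frac13+\frac23$, not $1/k$. This gives $\Tfrac(f_k)\ge\frac{2^{-k}}{\sqrt3}\sum_r m_r\sqrt{m_{r+1}w_r}\ge\frac{k}{\sqrt6}2^{k/2}$, which is \emph{linear} in $k$.

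\emph{Upper bound.} For each fiber $z\ne0$, its charge $q(z)$ collects sensitive edges from several blocks, so the field is not blockwise. This charge is shipped in one step along the $w_{\ell(z)}$ directions labelled $z$ into the zero fiber, partly through nonsensitive edges: $\V_{(a,s)}=-\frac{q(a)}{w_{\ell(a)}}\1_{\{\pi_k=0\}}$. All $k$ scales then meet on a single fiber of measure $2^{-k}$ and combine in $\ell^2$. Since $|q|\le\frac43 m_rw_r$ on $E_r$ and $m_r^3w_r=2^{3k}$, this yields $\B(f_k)\le\frac43\sqrt k\,2^{k/2}$.

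So your plan is missing two ideas. First, sensitive edges of different scales must lie on essentially disjoint vertex sets. Second, the field must transport all charge to a common hub adjacent to every fiber, which the quotient-cube structure provides.

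Your fallback via Proposition~\ref{prop:dual} cannot supply the needed upper bound on $\B$, since any dual potential only bounds $\B$ from below.
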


\begin{proof}
We identify a discrete cube with a vector space over $\mathbb F_2$.  Let
\[
   G_k=\mathbb F_2^k.
\]
For $z\ne0$, define
\[
   \ell(z)=\min\{r:z_r=1\},
   \qquad \ell(0)=k+1,
\]
and put
\[
   E_r=\{z\in G_k:\ell(z)=r\},
   \qquad m_r=|E_r|.
\]
Thus
\[
   m_r=2^{k-r}\quad(1\le r\le k),
   \qquad m_{k+1}=1.
\]
Define the alternating-level function
\[
   g_k(z)=(-1)^{\ell(z)}.
\]
For $1\le r\le k$, let
\[
   w_r=8^r=2^{3r}.
\]
For every $a\in G_k\setminus\{0\}$, introduce $w_{\ell(a)}$ cube coordinates carrying the label $a$.  More precisely, set
\[
   I_k=\{(a,s):a\ne0,\ 1\le s\le w_{\ell(a)}\}
\]
and define the surjective linear map
\[
   \pi_k:\mathbb F_2^{I_k}\longrightarrow G_k,
   \qquad
   \pi_k(x)=\sum_{(a,s)\in I_k}x_{a,s}a.
\]
Surjectivity follows because every standard basis vector of $G_k$ occurs as a label.  Finally, set
\[
   f_k=g_k\circ\pi_k.
\]
The dimension of its cube is
\begin{equation}\label{eq:nk-dimension}
   n_k=|I_k|
   =\sum_{r=1}^k m_rw_r
   =\sum_{r=1}^k2^{k-r}8^r
   =\frac{2^{k+2}(4^k-1)}3.
\end{equation}

Figure \ref{fig:quotient-cube-four-layers} records the local geometry of this lift.  If
$\pi_k(x)=y\in E_r$ and $z\in E_{r+1}$, then the unique quotient label joining
them is $a=y+z\in E_r$.  For every $1\le s\le w_r$, the coordinate flip
$x'=x+e_{(a,s)}$ is a distinct cube neighbor of $x$ satisfying
$\pi_k(x')=z$.

\begin{figure}[htbp]
   \centering
   \includegraphics[width=0.98\textwidth]{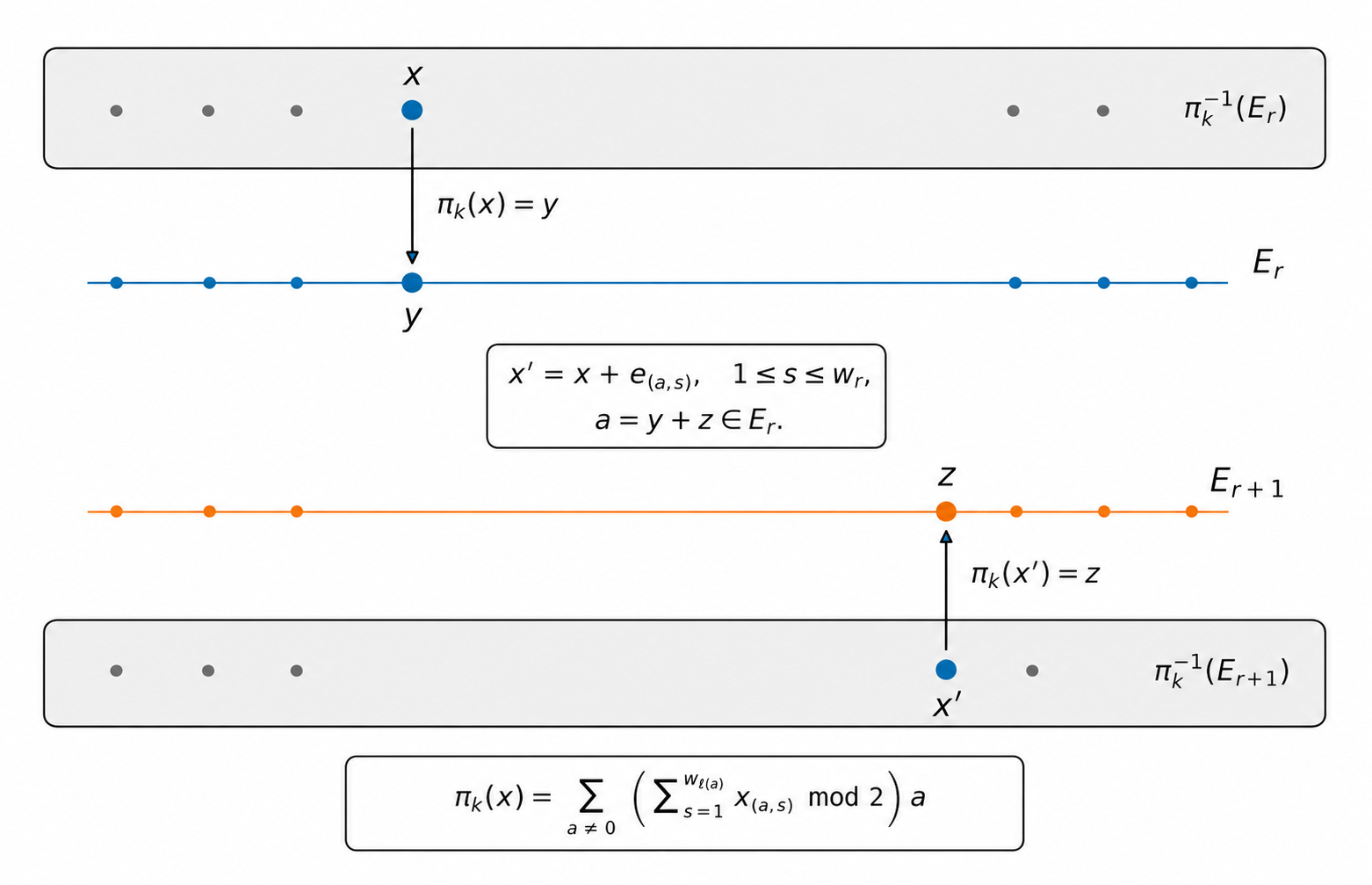}
   \caption{The four layers in the quotient-cube construction.  The vertical
   arrows show the projections $x\mapsto y=\pi_k(x)$ and
   $x'\mapsto z=\pi_k(x')$.  For fixed $y\in E_r$ and $z\in E_{r+1}$,
   the label $a=y+z\in E_r$ has multiplicity $w_r$, producing the neighbors
   $x'=x+e_{(a,s)}$, $1\le s\le w_r$.}
   \label{fig:quotient-cube-four-layers}
\end{figure}

\smallskip
\noindent\emph{Lower bound for the optimized fractional boundary.}
For $1\le r\le k$, assign the weight
\[
   \lambda_r=\frac1{\sqrt{3m_{r+1}w_r}}
\]
to every sensitive cube edge whose quotient endpoints lie in $E_r$ and $E_{r+1}$, and assign weight $0$ to all other sensitive edges.  Consecutive levels have opposite signs, so every selected edge is sensitive.

We verify the relevant degree counts.  Fix $z\in E_r$ and $y\in E_{r+1}$.  The unique quotient label that sends $z$ to $y$ is $a=z+y$.  Its first nonzero coordinate is $r$, so $a\in E_r$ and it occurs among the cube coordinates with multiplicity $w_r$.  Hence every quotient point in $E_r$ has exactly $m_{r+1}w_r$ selected coordinate directions to $E_{r+1}$.  Similarly, for $r\ge2$, every point in $E_r$ has exactly $m_{r-1}w_{r-1}$ selected directions to $E_{r-1}$.

At a vertex whose quotient lies in $E_r$, the selected edges going to $E_{r+1}$ contribute
\[
   m_{r+1}w_r\lambda_r^2=\frac13
\]
to the sum of squared incident weights.  For $2\le r\le k$, the selected edges going to $E_{r-1}$ contribute
\[
   m_{r-1}w_{r-1}\lambda_{r-1}^2
   =\frac{m_{r-1}}{3m_r}
   =\frac23.
\]
At the first level only the first contribution is present.  At the zero quotient $E_{k+1}=\{0\}$, the selected edges to $E_k$ contribute
\[
   m_kw_k\lambda_k^2=\frac13.
\]
Hence the hypothesis of Lemma \ref{lem:fractional-packing} is satisfied at every cube vertex.

Every fiber of $\pi_k$ has size $2^{n_k-k}$.  Counting each selected edge from its endpoint over $E_r$, the number of selected cube edges between the fibers over $E_r$ and $E_{r+1}$ is
\[
   2^{n_k-k}m_rm_{r+1}w_r.
\]
Lemma \ref{lem:fractional-packing} therefore yields
\begin{align}
   \Tfrac(f_k)
   \ge \frac1{2^k}\sum_{r=1}^k
      m_rm_{r+1}w_r\lambda_r \notag
   =\frac1{2^k\sqrt3}\sum_{r=1}^k
      m_r\sqrt{m_{r+1}w_r}.
      \label{eq:Tfrac-quotient-lower}
\end{align}
For $r<k$,
\[
   m_r\sqrt{m_{r+1}w_r}
   =2^{k-r}\sqrt{2^{k-r-1}2^{3r}}
   =2^{(3k-1)/2},
\]
and for $r=k$ the term equals $2^{3k/2}$, which is larger.  Therefore
\begin{equation}\label{eq:Tfrac-quotient-final}
   \Tfrac(f_k)\ge \frac{k}{\sqrt6}\,2^{k/2}.
\end{equation}

\smallskip
\noindent\emph{Upper bound for the Beckmann boundary.}
We construct an admissible vector field directly.  Define the quotient charge
$q:G_k\to\mathbb R$ by
\begin{equation}\label{eq:quotient-charge}
   q(z)=\frac12\sum_{a\ne0}w_{\ell(a)}
   \bigl(g_k(z)-g_k(z+a)\bigr).
\end{equation}
Since each label $a$ occurs among the cube coordinates with multiplicity
$w_{\ell(a)}$, we have
\begin{equation}\label{eq:Lf-quotient-charge}
   Lf_k(x)=q(\pi_k(x)).
\end{equation}
Moreover,
\begin{equation}\label{eq:quotient-charge-zero-sum}
   \sum_{z\in G_k}q(z)=0,
\end{equation}
because for every fixed $a$, translation by $a$ permutes $G_k$.

For $1\le r\le k$, let $d_r$ be the number of sensitive coordinate directions
at a quotient point $z\in E_r$, counted with label multiplicity.  This number
depends only on $r$.  Indeed, a label in $E_j$ with $j<r$ sends $z$ to $E_j$,
and the direction is sensitive exactly when $j\not\equiv r\pmod2$; a label in
$E_j$ with $j>r$ leaves the quotient in $E_r$ and is not sensitive.  Finally,
labels in $E_r$ send $z$ to the levels $E_s$, $s>r$, with exactly $m_s$ labels
leading to $E_s$, each occurring with multiplicity $w_r$.  Consequently,
\begin{equation}\label{eq:dr-formula}
   d_r=
   \sum_{\substack{j<r\\ j\not\equiv r\ ({\rm mod}\ 2)}}m_jw_j
   +w_r\sum_{\substack{s>r\\ s\not\equiv r\ ({\rm mod}\ 2)}}m_s,
\end{equation}
where the second sum includes $s=k+1$.  The two unrestricted sums satisfy
\[
\begin{aligned}
   \sum_{j<r}m_jw_j
   &=2^k\sum_{j=1}^{r-1}4^j
   <\frac13\,2^{k+2r}
   =\frac13m_rw_r,\\
   \sum_{s=r+1}^{k+1}m_s
   &=\sum_{s=r+1}^{k}2^{k-s}+1
   =2^{k-r}=m_r.
\end{aligned}
\]
Therefore
\begin{equation}\label{eq:dr-upper}
   d_r\le\frac43m_rw_r.
\end{equation}
Every sensitive direction at a point over $E_r$ contributes $(-1)^r$ to
$Lf_k$, and hence
\begin{equation}\label{eq:q-on-level}
   q(z)=(-1)^rd_r
   \qquad(z\in E_r).
\end{equation}

For a cube coordinate $(a,s)\in I_k$, define
\begin{equation}\label{eq:direct-vector-field}
   \V_{(a,s)}(x)
   =-\frac{q(a)}{w_{\ell(a)}}\,
   \1_{\{\pi_k(x)=0\}}.
\end{equation}
We check its divergence.  Suppose first that $\pi_k(x)=z\ne0$.  Then
$\V_{(a,s)}(x)=0$ for every $(a,s)$, and the neighbor
$x+e_{(a,s)}$ lies in the zero fiber exactly when $a=z$.  Hence
\[
\begin{aligned}
   \divg\V(x)
   &=-\sum_{s=1}^{w_{\ell(z)}}
      \V_{(z,s)}(x+e_{(z,s)})
     =q(z).
\end{aligned}
\]
If $\pi_k(x)=0$, every neighbor $x+e_{(a,s)}$ has nonzero quotient $a$, so
\[
\begin{aligned}
   \divg\V(x)
   &=\sum_{a\ne0}\sum_{s=1}^{w_{\ell(a)}}
      \V_{(a,s)}(x)
     =-\sum_{a\ne0}q(a)
     =q(0),
\end{aligned}
\]
where the last equality follows from \eqref{eq:quotient-charge-zero-sum}.
Together with \eqref{eq:Lf-quotient-charge}, this proves
$\divg\V=Lf_k$.

The field is supported on the zero fiber.  At every point in that fiber,
\begin{align*}
   \|\V(x)\|_2^2
   =\sum_{a\ne0}\sum_{s=1}^{w_{\ell(a)}}
      \frac{q(a)^2}{w_{\ell(a)}^2}
   =\sum_{r=1}^k\frac{m_rd_r^2}{w_r},
\end{align*}
where we used \eqref{eq:q-on-level}.  Since $\pi_k$ is surjective, a uniform
cube point lies in the zero fiber with probability $2^{-k}$.  Thus, by the
definition of $\B$ and \eqref{eq:dr-upper},
\begin{align*}
   \B(f_k)
   \le \E\|\V(x)\|_2
    =2^{-k}\left(\sum_{r=1}^k\frac{m_rd_r^2}{w_r}\right)^{1/2}
   \le\frac43\,2^{-k}
      \left(\sum_{r=1}^k m_r^3w_r\right)^{1/2}.
\end{align*}
Since
\[
   m_r^3w_r=2^{3(k-r)}2^{3r}=2^{3k}
\]
for every $1\le r\le k$, we obtain
\begin{equation}\label{eq:B-quotient-upper}
   \B(f_k)\le\frac43\sqrt{k}\,2^{k/2}.
\end{equation}
Combining \eqref{eq:Tfrac-quotient-final} and \eqref{eq:B-quotient-upper} proves \eqref{eq:unbounded-ratio}.  Finally, $\Tfrac(f_k)\le\Ccol(f_k)$, while
\[
   2^{3k}\le n_k\le\frac43\,2^{3k}
\]
by \eqref{eq:nk-dimension}.  Thus $k\asymp\log n_k$, which proves the remaining assertions.
\end{proof}

\begin{remark}\label{rem:unbounded-mechanism}
The mechanism is genuinely nonlocal.  Fractional and integral colorings must pay for sensitive edges at their endpoints.  By contrast, the Beckmann boundary prescribes only the divergence and may route the resulting charge through nonsensitive edges.  The quotient construction creates $k$ edge scales that add linearly in the fractional packing lower bound, while the direct Beckmann construction combines them in an $\ell_2$ fashion.  This produces the factor $\sqrt{k}$.
\end{remark}

\subsection{Summary of comparisons}\label{summary}
For every $f$, every fixed coloring $\chi$, and every fixed fractional coloring $\theta$,
\[
\begin{gathered}
   \B(f)\le\Tfrac(f)\le\Ccol(f)\le\Tcol^\chi(f),
   \qquad \Tfrac(f)\le\Ttheta(f),\\
   \B(f)\le\E\sqrt{h_{\{f=1\}}}.
\end{gathered}
\]
The reverse inequalities fail quantitatively.  Proposition \ref{prop:singleton} shows that fixed colorings, fixed fractional colorings and one-sided boundaries may be larger than $\B(f)$ by $\sqrt n$.  Theorem \ref{thm:unbounded-separation} shows that even the optimized fractional and integral colored boundaries satisfy
\[
   \sup_{\substack{f:\Omega_{n_k}\to\{-1,1\}\\ f\ \mathrm{nonconstant}}}
   \frac{\Tfrac(f)}{\B(f)}\gtrsim\sqrt{\log n_k},
   \qquad
   \sup_{\substack{f:\Omega_{n_k}\to\{-1,1\}\\ f\ \mathrm{nonconstant}}}
   \frac{\Ccol(f)}{\B(f)}\gtrsim\sqrt{\log n_k}
\]
along the explicit sequence of dimensions $(n_k)$.  Thus no dimension-free reverse comparison is possible, and the Beckmann--Talagrand theorem is genuinely stronger than the optimized colored-boundary inequality. As a complement to these lower bounds, Corollary \ref{log-reverse-beckmann-fractional} proved that for every Boolean function \(f:\Omega_n\to\{-1,1\}\),
\[
   \Tfrac(f)\lesssim \B(f)\log(en),
\]
and 
\[
\Tfrac(f)\asymp\Ccol(f)
\]
was established in Proposition \ref{prop:fractional-integral-comparable}.
\section{The IVHV identity and strong spectral endpoints}\label{sec:ivhv-spectral}

The proof of Theorem \ref{thm:beckmann-Poincar\'e} used the derivative identity of Ivanisvili--van Handel--Volberg \cite[Lemma~2.1]{IVHV}, which lies behind the reverse Poincar\'e estimate, only through Cauchy--Schwarz.  In this section we keep the exact kernel.  This gives strong spectral moment inequalities, including the endpoint estimate \eqref{eq:intro-strong-endpoint}.

For $t>0$, put $\rho=e^{-t}$ and let $\varepsilon_i(t)$ be independent random signs with
\[
   \E\varepsilon_i(t)=\rho.
\]
Define the standardized biased Rademacher variables
\[
   \delta_i(t)=\frac{\varepsilon_i(t)-\rho}{\sqrt{1-\rho^2}}.
\]
For $a=(a_1,\ldots,a_n)\in\R^n$ define the IVHV local gauge
\begin{equation}\label{eq:Psi-def}
   \Psi_t(a)=
   \frac{1}{\sqrt{e^{2t}-1}}
   \E\left|\sum_{i=1}^n a_i\delta_i(t)\right|.
\end{equation}

\begin{lemma}[Heat-kernel derivative identity]\label{lem:ivhv-identity}
For every $g:\Omega_n\to\R$, every $t>0$, every $x\in\Omega_n$ and every coordinate $i$,
\begin{equation}\label{eq:ivhv-identity}
   D_iP_tg(x)=
   \frac{1}{\sqrt{e^{2t}-1}}
   \E\big[\delta_i(t)g(x\varepsilon(t))\big].
\end{equation}
\end{lemma}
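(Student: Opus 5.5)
Both sides of \eqref{eq:ivhv-identity} are linear in $g$, so it suffices to check the identity on the Walsh characters $g=\chi_S$. For these, $D_iP_t\chi_S=\1_{i\in S}e^{-t|S|}\chi_S$ by the spectral facts recorded in Section~\ref{sec:notation}. Here $x\varepsilon(t)$ denotes the coordinatewise product, so $\chi_S(x\varepsilon)=\chi_S(x)\prod_{j\in S}\varepsilon_j$, and the right-hand side becomes
\[
   \frac{\chi_S(x)}{\sqrt{e^{2t}-1}}\,\E\Big[\delta_i\prod_{j\in S}\varepsilon_j\Big].
\]

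By independence, the expectation factors over coordinates, and each factor $j\ne i$ in $S$ contributes $\E\varepsilon_j=\rho$. The remaining factor is handled in two cases.

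If $i\notin S$, then $\delta_i$ is independent of the product and $\E\delta_i=0$. So both sides vanish.

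If $i\in S$, we need $\E[\delta_i\varepsilon_i]=(\E\varepsilon_i^2-\rho^2)/\sqrt{1-\rho^2}=\sqrt{1-\rho^2}$. The right-hand side then equals
\[
   \chi_S(x)\,\rho^{|S|-1}\,\frac{\sqrt{1-\rho^2}}{\sqrt{e^{2t}-1}}.
\]
With $\rho=e^{-t}$ we have $\sqrt{e^{2t}-1}=\rho^{-1}\sqrt{1-\rho^2}$, so this reduces to $\rho^{|S|}\chi_S(x)=e^{-t|S|}\chi_S(x)$, which matches the left-hand side.

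An equivalent route is to write the Mehler representation $P_tg(x)=\E g(x\varepsilon(t))$ and differentiate in $x_i$. Since the law of $x_i\varepsilon_i$ has density $(1+\rho x_iy_i)/2$, taking the half-difference in $x_i$ produces the weight $\rho y_i x_i$, which one then re-centers. The character check above is cleaner, though.

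The proof involves no real obstacle. The only step requiring care is the normalization: confirming that $\E[\delta_i\varepsilon_i]=\sqrt{1-\rho^2}$ combines with the prefactor $1/\sqrt{e^{2t}-1}$ to give exactly one extra factor of $\rho$. This is also where the convention that $\varepsilon(t)$ acts by coordinatewise multiplication must be used.
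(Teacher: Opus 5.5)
Your proof is correct, but it takes a different route from the paper's.

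You check the identity on the Walsh basis. This uses only the spectral definition $P_t\chi_S=e^{-t|S|}\chi_S$ and independence of the noise coordinates, so everything reduces to the single moment computation $\E[\delta_i\varepsilon_i]=\sqrt{1-\rho^2}$. In particular, you never need the noise representation $P_tg(x)=\E_\varepsilon g(x\varepsilon)$, which the paper takes as standard. In fact, your computation with $\delta_i$ replaced by $1$ proves that representation.

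The paper instead starts from the noise representation and works with an arbitrary $g$ without expanding it. It conditions on $\varepsilon_{\ne i}$ and lets $a,b$ be the two values of $g$ corresponding to the two values of $\varepsilon_i$. It then shows that $D_iP_tg(x)=\frac\rho2\E_{\varepsilon_{\ne i}}(a-b)$ and $\E_{\varepsilon_i}[g(x\varepsilon)(\varepsilon_i-\rho)]=\frac{1-\rho^2}{2}(a-b)$. So both sides are multiples of the same conditional two-point difference.

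That basis-free computation explains why the centered biased sign $\varepsilon_i-\rho$ is the right weight, and it does not rely on a Fourier basis. Your version is the quicker check once the spectral calculus is in place. Either argument yields the formula $D_iP_tg(x)=\frac{\rho}{\sqrt{1-\rho^2}}\E[G\delta_i]$, which the appendix feeds into Bessel's inequality to prove the reverse Poincar\'e estimate.
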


The identity is \cite[Lemma~2.1]{IVHV}.  A verification is recalled in Appendix \ref{app:heat-kernel}.

We first state the multiplier form for driftless Bernstein functions.  Thus $\phi(0)=0$, the linear drift coefficient vanishes, and the L\'evy--Khintchine representation \cite[Theorem~3.2]{SchillingSongVondracek} is
\begin{equation}\label{eq:bernstein-rep}
   \phi(\lambda)=\int_0^\infty (1-e^{-t\lambda})\,\nu(\dd t),
\end{equation}
where $\nu$ is a positive measure satisfying
\[
   \int_0^\infty (1\wedge t)\,\nu(\dd t)<\infty.
\]
A general Bernstein function with $\phi(0)=0$ may additionally contain a drift term $b\lambda$; that term is not included in the statement below.  Put
\[
   \eta_\phi(s)=\nu([s,\infty))
\]
and define
\begin{equation}\label{eq:Gamma-phi-def}
   \Gamma_\phi(a)=2\int_0^\infty \eta_\phi(s)\Psi_s(a)\,\dd s,
\end{equation}
with the convention that the value may be $+\infty$.

\begin{theorem}[Driftless Beckmann multiplier inequality]\label{thm:bernstein-multiplier}
Let $f:\Omega_n\to[-1,1]$.  If $\V$ is a vector field satisfying $\divg \V=Lf$, then
\begin{equation}\label{eq:bernstein-multiplier}
   \ip{f}{\phi(L)f}
   \le
   \E_x\Gamma_\phi(\V(x)).
\end{equation}
Consequently,
\[
   \ip{f}{\phi(L)f}
   \le
   \inf_{\divg \V=Lf}\E_x\Gamma_\phi(\V(x)).
\]
\end{theorem}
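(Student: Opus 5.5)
We follow the route of Theorem~\ref{thm:beckmann-Poincar\'e}, but replace the heat semigroup by the general multiplier and keep the exact kernel of Lemma~\ref{lem:ivhv-identity} instead of applying Cauchy--Schwarz.

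\emph{Step 1: write $\phi(L)$ as a time integral.} By \eqref{eq:bernstein-rep} and the spectral theorem,
\[
   \ip{f}{\phi(L)f}=\int_0^\infty \ip{f}{(I-P_t)f}\,\nu(\dd t).
\]
This is justified by Tonelli: each Fourier coefficient contributes $(1-e^{-t|S|})\widehat f(S)^2\ge0$, and there are finitely many of them.

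\emph{Step 2: express $I-P_t$ through $L$ and swap the integrals.} As in the proof of Theorem~\ref{thm:beckmann-Poincar\'e},
\[
   \ip{f}{(I-P_t)f}=\int_0^t\ip{Lf}{P_sf}\,\dd s .
\]
Fubini then gives
\[
   \ip{f}{\phi(L)f}=\int_0^\infty \eta_\phi(s)\,\ip{Lf}{P_sf}\,\dd s,
\]
because $\int_{t>s}\nu(\dd t)=\nu((s,\infty))$. This agrees with $\eta_\phi(s)=\nu([s,\infty))$ for almost every $s$.

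Fubini needs absolute integrability. The spectral form $\ip{Lf}{P_sf}=\sum_S|S|e^{-s|S|}\widehat f(S)^2\ge0$ is nonnegative, so Tonelli applies directly. It is also the reason the integral is finite. Near $s=0$ we use $\eta_\phi(s)\le\nu([s,1))+\nu([1,\infty))$, and $\int_0^1\nu([s,1))\,\dd s=\int_{[0,1)} t\,\nu(\dd t)<\infty$. For large $s$ the factor $e^{-s}$ gives decay. Either way, equality holds between nonnegative quantities, possibly both infinite.

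\emph{Step 3: integrate by parts and use the kernel identity.} Integrating by parts on the cube gives $\ip{Lf}{P_sf}=2\sum_i\ip{\V_i}{D_iP_sf}$. By Lemma~\ref{lem:ivhv-identity},
\[
   \sum_i \V_i(x)D_iP_sf(x)=\frac{1}{\sqrt{e^{2s}-1}}\E\Big[\Big(\sum_i\V_i(x)\delta_i(s)\Big)f(x\varepsilon(s))\Big].
\]
Since $|f|\le1$, the absolute value of this expression is at most $\Psi_s(\V(x))$. Hence
\[
   \ip{Lf}{P_sf}\le 2\,\E_x\Psi_s(\V(x)).
\]
Multiplying by $\eta_\phi(s)\ge0$, integrating in $s$, and exchanging $\E_x$ with $\int\dd s$ (Tonelli, since everything is nonnegative) yields $\E_x\Gamma_\phi(\V(x))$. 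The displayed consequence follows by taking the infimum over admissible $\V$.

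The only delicate point is the measure-theoretic bookkeeping in Step~2. One must match the boundary convention $[s,\infty)$ versus $(s,\infty)$, which is harmless since the two differ on a countable set of $s$, and justify the interchanges. Both are handled by the nonnegativity of the spectral integrand. The case where $\Gamma_\phi$ is infinite is trivial.
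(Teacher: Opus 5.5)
Your proof is correct and follows the paper's argument. Both use the semigroup identity for $\ip{f}{(I-P_t)f}$, integration by parts, the IVHV kernel identity with $|f|\le1$, the Bernstein representation, and a Tonelli exchange that produces $\nu([s,\infty))$. The only difference is order: you exchange the $s$- and $t$-integrals in the exact identity first, justified by the spectral positivity of $\ip{Lf}{P_sf}$, and then bound; the paper bounds first and applies Tonelli to the nonnegative majorant.
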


\begin{proof}
For $t>0$ set
\[
   \operatorname{NS}_t(f)=\ip{f}{(I-P_t)f}.
\]
Let $\V$ be admissible.  As in the proof of Theorem \ref{thm:beckmann-Poincar\'e},
\[
   \operatorname{NS}_t(f)
   =2\int_0^t\E_x\sum_{i=1}^n \V_i(x)D_iP_sf(x)\,\dd s.
\]
Using Lemma \ref{lem:ivhv-identity} and $|f|\le1$,
\[
\begin{aligned}
   \left|\E_x\sum_i \V_i(x)D_iP_sf(x)\right|
   &\le
   \frac{1}{\sqrt{e^{2s}-1}}
   \E_{x,\varepsilon}\left|\sum_i\V_i(x)\delta_i(s)f(x\varepsilon(s))\right|  \\
   &\le
   \frac{1}{\sqrt{e^{2s}-1}}
   \E_{x,\varepsilon}\left|\sum_i\V_i(x)\delta_i(s)\right|  \\
   &=\E_x\Psi_s(\V(x)).
\end{aligned}
\]
Therefore
\begin{equation}\label{eq:NS-Psi}
   \operatorname{NS}_t(f)
   \le
   2\E_x\int_0^t\Psi_s(\V(x))\,\dd s.
\end{equation}
By the spectral theorem and \eqref{eq:bernstein-rep},
\[
   \ip{f}{\phi(L)f}
   =\int_0^\infty \operatorname{NS}_t(f)\,\nu(\dd t).
\]
The integrand on the right-hand side of \eqref{eq:NS-Psi} is nonnegative.  Hence Tonelli's theorem gives
\[
\begin{aligned}
   \ip{f}{\phi(L)f}
   &\le
   2\E_x\int_0^\infty\int_0^t\Psi_s(\V(x))\,\dd s\,\nu(\dd t)  \\
   &=
   2\E_x\int_0^\infty\nu([s,\infty))\Psi_s(\V(x))\,\dd s \\
   &=
   \E_x\Gamma_\phi(\V(x)).
\end{aligned}
\]
This proves the theorem.
\end{proof}

We now specialize to fractional powers.  For $0<\alpha<1$, the standard Bernstein representation \cite[Chapter~3]{SchillingSongVondracek} gives
\begin{equation}\label{eq:power function rep}
       \lambda^\alpha=c_\alpha\int_0^\infty(1-e^{-t\lambda})t^{-1-\alpha}\,\dd t.
\end{equation}
Thus Theorem \ref{thm:bernstein-multiplier} gives the following.

\begin{corollary}[Fractional spectral moments]\label{cor:fractional-multiplier}
Let $0<\alpha<1$ and define
\begin{equation}\label{eq:Gamma-alpha-def}
   \Gamma_\alpha(a)=\int_0^\infty s^{-\alpha}\Psi_s(a)\,\dd s.
\end{equation}
Then for every $f:\Omega_n\to[-1,1]$,
\begin{equation}\label{eq:fractional-general}
   \sum_{S\ne\varnothing}|S|^\alpha\widehat f(S)^2
   =\ip{f}{L^\alpha f}
   \lesssim_\alpha
   \inf_{\divg \V=Lf}\E\Gamma_\alpha(\V(x)).
\end{equation}
\end{corollary}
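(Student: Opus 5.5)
This is a direct specialization of Theorem~\ref{thm:bernstein-multiplier} to $\phi(\lambda)=\lambda^\alpha$. We use the representation \eqref{eq:power function rep}, i.e.\ $\nu(\dd t)=c_\alpha t^{-1-\alpha}\,\dd t$ with $0<\alpha<1$.

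First, we check that this $\phi$ is an admissible driftless Bernstein function. We have $\phi(0)=0$, there is no linear drift term, and
\[
\int_0^\infty(1\wedge t)\,t^{-1-\alpha}\,\dd t<\infty
\]
precisely because $0<\alpha<1$. The constant is explicit: $c_\alpha=\alpha/\Gamma(1-\alpha)$, where this $\Gamma$ is Euler's Gamma function. Its exact value is irrelevant for the argument; we only need $0<c_\alpha<\infty$.

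Second, we compute the tail function:
\[
\eta_\phi(s)=\nu([s,\infty))=c_\alpha\int_s^\infty t^{-1-\alpha}\,\dd t=\frac{c_\alpha}{\alpha}\,s^{-\alpha}.
\]
Substituting into \eqref{eq:Gamma-phi-def} gives
\[
\Gamma_\phi(a)=\frac{2c_\alpha}{\alpha}\int_0^\infty s^{-\alpha}\Psi_s(a)\,\dd s=\frac{2c_\alpha}{\alpha}\,\Gamma_\alpha(a),
\]
as an identity in $[0,\infty]$. This holds for every $a\in\R^n$, including when both sides are infinite, since $\Psi_s\ge0$.

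Third, for $f$ with values in $[-1,1]$ and any admissible $\V$, Theorem~\ref{thm:bernstein-multiplier} yields
\[
\ip{f}{L^\alpha f}\le \frac{2c_\alpha}{\alpha}\,\E\,\Gamma_\alpha(\V(x)).
\]
Taking the infimum over $\V$ with $\divg\V=Lf$ gives the inequality in \eqref{eq:fractional-general}, with implied constant $2c_\alpha/\alpha$. The equality on the left of \eqref{eq:fractional-general} is the spectral identity $L^\alpha\chi_S=|S|^\alpha\chi_S$. The empty set contributes nothing because $0^\alpha=0$.

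There is essentially no obstacle here. The only points needing care are the integrability conditions on $\nu$, which hold because $\alpha\in(0,1)$, and the use of Tonelli's theorem. Tonelli is already covered inside the proof of Theorem~\ref{thm:bernstein-multiplier} because $\Psi_s\ge0$, so possibly infinite values of $\Gamma_\alpha$ cause no issue. The real work, bounding $\Gamma_\alpha$ in terms of $\omega_\alpha$, belongs to later results and is not part of this corollary.
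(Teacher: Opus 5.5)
Your proposal is correct and follows the paper's route: the paper obtains this corollary by specializing Theorem~\ref{thm:bernstein-multiplier} to $\phi(\lambda)=\lambda^\alpha$ with $\nu(\dd t)=c_\alpha t^{-1-\alpha}\,\dd t$. Your constant $2c_\alpha/\alpha=2/\Gamma(1-\alpha)$ also matches the one in the exact identity the paper records immediately after the statement.
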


For completeness, we record the exact identity behind \eqref{eq:fractional-general}.  Let $\V$ be any field satisfying $\divg\V=Lf$ and put
\[
   c_\alpha=\frac{\alpha}{\Gamma(1-\alpha)}.
\]
Using \eqref{eq:power function rep} and the identity
\[
   \ip{f}{(I-P_t)f}
   =2\int_0^t\E_x\sum_i\V_i(x)D_iP_sf(x)\,\dd s,
\]
we obtain
\[
\begin{aligned}
   \ip{f}{L^\alpha f}
   &=2c_\alpha\int_0^\infty\int_0^t
      \E_x\sum_i\V_i(x)D_iP_sf(x)\,\dd s\,t^{-1-\alpha}\,\dd t\\
   &=\frac{2}{\Gamma(1-\alpha)}
      \int_0^\infty s^{-\alpha}
      \E_x\sum_i\V_i(x)D_iP_sf(x)\,\dd s\\
   &=\frac{2}{\Gamma(1-\alpha)}
      \int_0^\infty
      \E_{x,\varepsilon}
      \left[
         f(x\varepsilon(s))
         \sum_{i=1}^n \V_i(x)\delta_i(s)
      \right]
      \frac{s^{-\alpha}}{\sqrt{e^{2s}-1}}\,\dd s.
\end{aligned}
\]
Here the interchange of the $s$- and $t$-integrals is justified absolutely.  Indeed, the finite Walsh expansion gives
\[
   D_iP_sf=\sum_{S\ni i}e^{-s|S|}\widehat f(S)\chi_S.
\]
Consequently $D_iP_sf=O(1)$ as $s\downarrow0$ and $D_iP_sf=O(e^{-s})$ as $s\to\infty$, with constants depending only on the fixed functions $f$ and $\V$.  Thus
\[
   s^{-\alpha}\left|\E_x\sum_i\V_iD_iP_sf\right|
\]
is integrable on $(0,\infty)$ because $0<\alpha<1$.  Taking absolute values in the exact identity and using $|f|\le1$ recovers Corollary \ref{cor:fractional-multiplier}.

The next elementary estimate converts the exact IVHV gauge into local boundary moments.

\begin{lemma}[Flat-vector estimate]\label{lem:flat-vector}
Let $u_m=(1,\ldots,1,0,\ldots,0)\in\R^n$ have $m$ nonzero coordinates.  For $0<\alpha<1$,
\begin{equation}\label{eq:flat-Gamma-bound}
   \Gamma_\alpha(u_m)
   \lesssim_\alpha
   \omega_\alpha(m),
\end{equation}
where $\omega_\alpha(0)=0$ and, for $m\ge1$,
\begin{equation}\label{eq:omega-alpha-def}
   \omega_\alpha(m)=
   \begin{cases}
      \sqrt m, & 0<\alpha<1/2,\\[3pt]
      \sqrt m\log(e+m), & \alpha=1/2,\\[3pt]
      m^\alpha, & 1/2<\alpha<1.
   \end{cases}
\end{equation}
The same bound holds for every vector $a$ with at most $m$ nonzero coordinates and $|a_i|\le1$.
\end{lemma}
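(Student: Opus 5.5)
Our plan is to bound $\Psi_s(a)$ by two estimates and integrate against $s^{-\alpha}$, splitting at the scale $s\asymp 1/m$. Fix $a$ with at most $m$ nonzero coordinates and $|a_i|\le1$. The $\delta_i(s)$ are independent, centered, and have unit variance. Cauchy--Schwarz therefore gives the Gaussian-type bound
\[
   \E\Bigl|\sum_i a_i\delta_i(s)\Bigr|\le\|a\|_2\le\sqrt m .
\]
We also use the crude triangle-inequality bound
\[
   \E\Bigl|\sum_i a_i\delta_i(s)\Bigr|\le\sum_i|a_i|\,\E|\delta_i(s)| .
\]
To evaluate $\E|\delta_i(s)|$, note that $\varepsilon_i=1$ with probability $(1+\rho)/2$ and $\varepsilon_i=-1$ with probability $(1-\rho)/2$. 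Hence
\[
   \E|\varepsilon_i-\rho|=(1-\rho^2),
   \qquad
   \E|\delta_i(s)|=\sqrt{1-\rho^2}=\sqrt{1-e^{-2s}} .
\]
Since $\sqrt{e^{2s}-1}=e^{s}\sqrt{1-e^{-2s}}$, the crude bound gives
\[
   \Psi_s(a)\le m\,e^{-s}.
\]
The Cauchy--Schwarz bound gives
\[
   \Psi_s(a)\le \frac{\sqrt m}{\sqrt{e^{2s}-1}}\lesssim \sqrt m\,\min(s^{-1/2},e^{-s}).
\]

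Next we integrate. For $m\ge1$ we write
\[
   \Gamma_\alpha(a)=\int_0^\infty s^{-\alpha}\Psi_s(a)\,\dd s
\]
and split the range into $(0,1/m)$, $(1/m,1)$ and $(1,\infty)$.
\begin{itemize}
\item On $(1,\infty)$ the Cauchy--Schwarz bound gives $\sqrt m\int_1^\infty s^{-\alpha}e^{-s}\,\dd s\lesssim\sqrt m$.
\item On $(0,1/m)$ the linear bound gives $m\int_0^{1/m}s^{-\alpha}\,\dd s\asymp_\alpha m^{\alpha}$. This is at most $\sqrt m$ when $\alpha\le 1/2$.
\item On $(1/m,1)$ the Cauchy--Schwarz bound gives $\sqrt m\int_{1/m}^1 s^{-\alpha-1/2}\,\dd s$.
\end{itemize}
The middle integral depends on $\alpha$. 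For $\alpha<1/2$ it is $O_\alpha(1)$, giving $\sqrt m$. For $\alpha=1/2$ it equals $\log m$, giving $\sqrt m\log(e+m)$. For $\alpha>1/2$ it is $\lesssim_\alpha m^{\alpha-1/2}$, giving $m^\alpha$. In every case the total is $\lesssim_\alpha\omega_\alpha(m)$. When $m=0$ we have $a=0$, so $\Psi_s(a)=0$ and the bound holds trivially. The flat vector $u_m$ is the special case $a=u_m$.

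The only point needing care is the small-$s$ regime, where the prefactor $(e^{2s}-1)^{-1/2}\sim s^{-1/2}$ would make the Cauchy--Schwarz bound alone insufficient for $\alpha\ge1/2$. For $\alpha>1/2$, integrating $s^{-\alpha-1/2}$ all the way down to $0$ diverges. The $L^1$ computation $\E|\delta_i|=\sqrt{1-e^{-2s}}$ exactly cancels this singularity and yields the $m$-linear, bounded-in-$s$ estimate. The threshold $s=1/m$ balances $m$ against $\sqrt{m/s}$. Everything else is routine calculus, which produces exactly the three profiles of $\omega_\alpha$.
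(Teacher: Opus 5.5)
Your proof is correct and follows the paper's argument essentially line by line: you use the same Cauchy--Schwarz bound $\sqrt m$, the same $\ell_1$ bound via $\E|\delta_i(s)|=\sqrt{1-\rho^2}$ (giving $\Psi_s(a)\le m e^{-s}$), and the same split of the $s$-integral at $1/m$ and $1$. One cosmetic slip: $(e^{2s}-1)^{-1/2}\lesssim\min(s^{-1/2},e^{-s})$ is false near $s=0$, since the left side blows up while the right side stays bounded; what you mean, and what you actually use in the integration, is $\lesssim s^{-1/2}$ on $(0,1]$ and $\lesssim e^{-s}$ on $[1,\infty)$.
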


\begin{proof}
It suffices to prove the estimate for such a vector $a$.  If $m=0$, then $a=0$, so $\Psi_t(a)=\Gamma_\alpha(a)=0$, and the assertion is immediate.  Assume henceforth that $m\ge1$.

First suppose $0<t\le1$.  The variables $\delta_i(t)$ are centered and have variance one.  Hence Cauchy--Schwarz gives
\[
   \E\left|\sum_i a_i\delta_i(t)\right|
   \le \left(\sum_i a_i^2\right)^{1/2}
   \le \sqrt m.
\]
Since $e^{2t}-1\asymp t$ on $(0,1]$,
\begin{equation}\label{eq:Psi-L2-small}
   \Psi_t(a)\lesssim \sqrt{m/t}.
\end{equation}
We also need an $\ell_1$-type bound.  By the triangle inequality and $|a_i|\le1$, it is enough to estimate $\Psi_t(e_i)$.  If $\rho=e^{-t}$, then
\[
   \E|\delta_i(t)|=\sqrt{1-\rho^2},
   \qquad
   \Psi_t(e_i)=\frac{\sqrt{1-\rho^2}}{\sqrt{e^{2t}-1}}=\rho\le1.
\]
Therefore
\begin{equation}\label{eq:Psi-L1-small}
   \Psi_t(a)\le m,
   \qquad 0<t\le1.
\end{equation}
Combining \eqref{eq:Psi-L2-small} and \eqref{eq:Psi-L1-small},
\begin{equation}\label{eq:Psi-min}
   \Psi_t(a)
   \lesssim
   \min\left\{m,\sqrt{m/t}\right\},
   \qquad 0<t\le1.
\end{equation}
For $t\ge1$, the prefactor in \eqref{eq:Psi-def} is $O(e^{-t})$, while the centered sum has $L^1$ norm at most its $L^2$ norm, which is at most $\sqrt m$.  Hence
\begin{equation}\label{eq:Psi-large}
   \Psi_t(a)\lesssim e^{-t}\sqrt m,
   \qquad t\ge1.
\end{equation}
The contribution of $t\ge1$ to $\Gamma_\alpha(a)$ is therefore $O_\alpha(\sqrt m)$.

For $0<t\le1$, split at $t=1/m$.  By \eqref{eq:Psi-min},
\[
\begin{aligned}
   \int_0^1 t^{-\alpha}\Psi_t(a)\,\dd t
   &\lesssim
   m\int_0^{1/m}t^{-\alpha}\,\dd t
   +\sqrt m\int_{1/m}^1t^{-\alpha-1/2}\,\dd t\\
   &=\frac{m^\alpha}{1-\alpha}
   +\sqrt m\int_{1/m}^1t^{-\alpha-1/2}\,\dd t.
\end{aligned}
\]
The second term equals, up to a constant depending only on $\alpha$,
\[
   \begin{cases}
      \sqrt m\bigl(1-m^{\alpha-1/2}\bigr), & 0<\alpha<1/2,\\
      \sqrt m\log m, & \alpha=1/2,\\
      \sqrt m\bigl(m^{\alpha-1/2}-1\bigr), & 1/2<\alpha<1.
   \end{cases}
\]
Thus it is bounded respectively by $C_\alpha\sqrt m$, $\sqrt m\log(e+m)$, and $C_\alpha m^\alpha$.  The first integral has size $C_\alpha m^\alpha$; here $m^\alpha\le\sqrt m$ when $\alpha<1/2$, while $\sqrt m\le m^\alpha$ when $\alpha>1/2$.  Combining these estimates with the $t\ge1$ contribution proves \eqref{eq:flat-Gamma-bound} in all three regimes.
\end{proof}

\begin{theorem}[One-sided spectral moment scale]\label{thm:one-sided-moment-scale}
Let $\A\subset\Omega_n$ and
\[
   h_{\A}(x)=\#\{i:x\in\A,\ x^{\oplus i}\notin\A\}.
\]
Then for every $0<\alpha<1$,
\begin{equation}\label{eq:one-sided-alpha}
   \sum_{S\ne\varnothing}|S|^\alpha\widehat{\1_{\A}}(S)^2
   \lesssim_\alpha
   \E\omega_\alpha(h_{\A}(x)).
\end{equation}
In particular, at the critical endpoint $\alpha=1/2$,
\begin{equation}\label{eq:one-sided-half}
   \sum_{S\ne\varnothing}\sqrt{|S|}\,\widehat{\1_{\A}}(S)^2
   \lesssim
   \E\big[\sqrt{h_{\A}(x)}\log(e+h_{\A}(x))\big].
\end{equation}
\end{theorem}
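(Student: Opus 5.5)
The plan is to apply Corollary~\ref{cor:fractional-multiplier} to a bounded function built from $\1_{\A}$, together with an explicit admissible vector field. That field should be supported on $\A$, have entries in $\{0,1\}$ (up to a constant factor), and be nonzero only in the $h_{\A}(x)$ boundary directions. Lemma~\ref{lem:flat-vector} then bounds $\Gamma_\alpha$ of this field pointwise by $\omega_\alpha(h_{\A}(x))$, and averaging over $x$ gives \eqref{eq:one-sided-alpha}.

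First, I would take $f=2\1_{\A}-1$, which is Boolean, so that $f:\Omega_n\to[-1,1]$. Since $\widehat f(S)=2\widehat{\1_{\A}}(S)$ for $S\ne\varnothing$,
\[
   \sum_{S\ne\varnothing}|S|^\alpha\widehat{\1_{\A}}(S)^2=\tfrac14\ip{f}{L^\alpha f}.
\]
Next, I would use the all-red field from the proof of Proposition~\ref{prop:hierarchy} with $\theta\equiv1$. For each sensitive edge $\{x,x^{\oplus i}\}$ with $x\in\A$ and $x^{\oplus i}\notin\A$, set $\V_i(x)=1$ and $\V_i(x^{\oplus i})=0$; all other components are $0$. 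The check there shows $\V_i(x)-\V_i(x^{\oplus i})=D_if(x)$ on every edge, hence $\divg\V=Lf$.

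At $x\in\A$, the vector $\V(x)$ is the indicator of the $h_{\A}(x)$ directions leading out of $\A$. At $x\notin\A$, $\V(x)=0$. In both cases $\V(x)$ has at most $h_{\A}(x)$ nonzero coordinates, each bounded by $1$ in absolute value.

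Corollary~\ref{cor:fractional-multiplier} then gives
\[
   \ip{f}{L^\alpha f}\lesssim_\alpha\E\,\Gamma_\alpha(\V(x)),
\]
and the last sentence of Lemma~\ref{lem:flat-vector} gives $\Gamma_\alpha(\V(x))\lesssim_\alpha\omega_\alpha(h_{\A}(x))$ pointwise. For $x\notin\A$ both sides vanish, since $\V(x)=0$ and $\omega_\alpha(0)=0$ (and also $h_{\A}(x)=0$ there). Averaging over $x$ yields \eqref{eq:one-sided-alpha}. The case $\alpha=1/2$ is \eqref{eq:one-sided-half}, read off directly from the definition of $\omega_{1/2}$.

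The only real point to check is that the one-sided field is admissible with entries in $[0,1]$, rather than entries of size $2$. This holds because of the normalization $D_if=\pm1$ on sensitive edges and the identity $\V_i(x)-\V_i(x^{\oplus i})=1$ at the positive endpoint. It matters because Lemma~\ref{lem:flat-vector} requires $|a_i|\le1$; even if the entries were off by a constant, $\Psi_t$ and hence $\Gamma_\alpha$ are homogeneous, so only the constant would change. Beyond this, everything is a direct combination of the results already proved, and I expect no substantive obstacle.
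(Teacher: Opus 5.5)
Your proposal is correct and is essentially the paper's proof: both apply Corollary~\ref{cor:fractional-multiplier} with the one-sided field supported at the $\A$-endpoints of boundary edges, and then invoke Lemma~\ref{lem:flat-vector} pointwise. The only difference is normalization. The paper applies the corollary directly to $\1_{\A}$, which already takes values in $[-1,1]$, with field entries $\tfrac12$; you pass to $f=2\1_{\A}-1$ with entries $1$ and divide by $4$, which is equivalent.
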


\begin{proof}
Apply Corollary \ref{cor:fractional-multiplier} to $g=\1_{\A}$.  Define a vector field $\V^+$ by
\[
   \V_i^+(x)=\frac12\,\1_{\{x\in\A,\ x^{\oplus i}\notin\A\}}.
\]
Then $\divg \V^+=L\1_{\A}$.  Indeed, on a boundary edge from $\A$ to $\A^c$, the difference $\V_i^+(x)-\V_i^+(x^{\oplus i})$ equals $1/2$ at the point in $\A$ and $-1/2$ at the point in $\A^c$, exactly matching $D_i\1_{\A}$.  On non-boundary edges both sides vanish.

At a point $x\in\A$, the vector $\V^+(x)$ has $h_{\A}(x)$ nonzero coordinates, all equal to $1/2$; at a point $x\notin\A$ it is zero.  By homogeneity of $\Gamma_\alpha$ and Lemma \ref{lem:flat-vector},
\[
   \Gamma_\alpha(\V^+(x))
   \lesssim_\alpha
   \omega_\alpha(h_{\A}(x)).
\]
Substituting this admissible field into \eqref{eq:fractional-general} proves \eqref{eq:one-sided-alpha}, and \eqref{eq:one-sided-half} is the case $\alpha=1/2$.
\end{proof}

The same argument gives a colored version.

\begin{corollary}[Colored strong spectral moments]\label{cor:colored-moment-scale}
Let $f:\Omega_n\to\{-1,1\}$ and color every sensitive edge red or blue.  Then for every $0<\alpha<1$,
\begin{equation}\label{eq:colored-alpha}
   \sum_{S\ne\varnothing}|S|^\alpha\widehat f(S)^2
   \lesssim_\alpha
   \E\left[\omega_\alpha(s_{f,{\rm red}}(x))+\omega_\alpha(s_{f,{\rm blue}}(x))\right].
\end{equation}
In particular,
\begin{align*}
   \sum_{S\ne\varnothing}\sqrt{|S|}\,\widehat f(S)^2
   &\lesssim
   \E\Big[
      \sqrt{s_{f,{\rm red}}(x)}\log(e+s_{f,{\rm red}}(x)) \\
   &\hspace{3.2cm}+
      \sqrt{s_{f,{\rm blue}}(x)}\log(e+s_{f,{\rm blue}}(x))
   \Big].
\end{align*}
\end{corollary}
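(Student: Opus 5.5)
The plan is to mimic the proof of Theorem~\ref{thm:one-sided-moment-scale}, but apply Corollary~\ref{cor:fractional-multiplier} directly to $f$ (which takes values in $\{-1,1\}\subset[-1,1]$) with a vector field built from the coloring. This is the integral version of the field $\V^\theta$ in the proof of Proposition~\ref{prop:hierarchy}. If $e=\{x,x^{\oplus i}\}$ is sensitive with $f(x)=1$, $f(x^{\oplus i})=-1$, set
\[
   \V_i(x)=1,\quad \V_i(x^{\oplus i})=0 \quad\text{if $e$ is red},
\]
\[
   \V_i(x)=0,\quad \V_i(x^{\oplus i})=-1 \quad\text{if $e$ is blue}.
\]
All components on nonsensitive edges are set to $0$.

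First I will check that $\divg\V=Lf$. On a sensitive edge, $D_if(x)=1$ at the positive endpoint and $-1$ at the negative one. In both colorings, $\V_i(x)-\V_i(x^{\oplus i})=1$, and the reversed difference is $-1$. On nonsensitive edges both sides vanish. This is exactly the case $\theta\in\{0,1\}$ of Proposition~\ref{prop:hierarchy}.

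Next I will read off the local structure of $\V$. At a point with $f(x)=1$, $\V(x)$ has exactly $s_{f,{\rm red}}(x)$ nonzero coordinates, each equal to $1$. At a point with $f(x)=-1$, it has exactly $s_{f,{\rm blue}}(x)$ nonzero coordinates, each equal to $-1$. Since $\Psi_t(-a)=\Psi_t(a)$, Lemma~\ref{lem:flat-vector} (in its form for vectors with at most $m$ nonzero coordinates bounded by $1$) gives
\[
   \Gamma_\alpha(\V(x))\lesssim_\alpha \omega_\alpha(s_{f,{\rm red}}(x))+\omega_\alpha(s_{f,{\rm blue}}(x)).
\]
At each $x$ only one of the two terms is nonzero, and $\omega_\alpha(0)=0$.

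Taking expectations and inserting the result into \eqref{eq:fractional-general} yields \eqref{eq:colored-alpha}, because $\ip{f}{L^\alpha f}=\sum_{S\ne\varnothing}|S|^\alpha\widehat f(S)^2$. The displayed endpoint inequality is then the case $\alpha=1/2$. There is no real obstacle here. The only points needing care are the sign convention on blue edges, so that the divergence matches $Lf$ rather than $-Lf$, and the observation that the flat-vector bound applies to vectors with entries $\pm1$.
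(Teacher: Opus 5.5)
Your proposal is correct and follows essentially the same route as the paper. The paper builds the same red/blue endpoint field, checks $\divg\V=Lf$, and applies Lemma~\ref{lem:flat-vector} together with Corollary~\ref{cor:fractional-multiplier}. Your sign-symmetry remark $\Psi_t(-a)=\Psi_t(a)$ does the same job as the paper's appeal to absolute values; the lemma's clause for vectors with $|a_i|\le1$ also covers it directly.
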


\begin{proof}
For every sensitive edge $e=\{x,x^{\oplus i}\}$, orient the notation so that $f(x)=1$ and $f(x^{\oplus i})=-1$.  Define the two endpoint values of the $i$th field component by
\[
\begin{array}{c|cc}
 &x\text{, the }f=1\text{ endpoint}&x^{\oplus i}\text{, the }f=-1\text{ endpoint}\\ \hline
 e\text{ red}&1&0\\
 e\text{ blue}&0&-1.
\end{array}
\]
On nonsensitive edges set both endpoint values equal to zero.  In either color case,
\[
   \V_i(x)-\V_i(x^{\oplus i})=1=D_i f(x),
\]
and at the opposite endpoint the same identity holds with both sides negated.  Summing over $i$ therefore gives $\divg\V=Lf$.

If $f(x)=1$, the nonzero coordinates of $\V(x)$ are precisely the red sensitive edges charged at $x$, and all those entries equal $1$.  If $f(x)=-1$, the nonzero coordinates are precisely the blue sensitive edges charged at $x$, and all those entries equal $-1$.  Lemma \ref{lem:flat-vector} applies to the absolute values of these vectors, and Corollary \ref{cor:fractional-multiplier} gives \eqref{eq:colored-alpha}.
\end{proof}

\begin{lemma}[Uniform binomial ratio]\label{lem:uniform-binomial-ratio}
There are universal constants $c,C>0$ such that for every integer $m\ge0$ and every $0\le j\le m$,
\begin{equation}\label{eq:uniform-binomial-ratio}
   c\sqrt{\frac{m+1}{(j+1)(m-j+1)}}
   \le
   \frac{\binom mj^2}{\binom{2m}{2j}}
   \le
   C\sqrt{\frac{m+1}{(j+1)(m-j+1)}}.
\end{equation}
\end{lemma}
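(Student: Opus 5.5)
The plan is to compare both numerator and denominator with their Stirling-type asymptotics, using a uniform form of Stirling's formula that is valid down to $0$ by shifting arguments by one. The cleanest route uses the standard two-sided estimate
\[
   \binom{N}{K}\asymp \sqrt{\frac{N+1}{(K+1)(N-K+1)}}\;\frac{N^N}{K^K(N-K)^{N-K}},
\]
valid uniformly for all $0\le K\le N$ with the convention $0^0=1$. This follows from Stirling's formula $k!\asymp \sqrt{k+1}\,(k/e)^k$ for all $k\ge0$: the ratio $k!/(\sqrt{k+1}(k/e)^k)$ is bounded above and below by positive constants, since it tends to $\sqrt{2\pi}$ and is positive for each $k$. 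In the resulting expression the $e$-powers cancel, because $N=K+(N-K)$.

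First, I apply this estimate with $(N,K)=(m,j)$ and square it. Second, I apply it with $(N,K)=(2m,2j)$. The entropy factors then match exactly:
\[
   \frac{(2m)^{2m}}{(2j)^{2j}(2m-2j)^{2m-2j}}
   =\frac{2^{2m}m^{2m}}{2^{2j}j^{2j}\,2^{2m-2j}(m-j)^{2m-2j}}
   =\left(\frac{m^m}{j^j(m-j)^{m-j}}\right)^2.
\]
The powers of two cancel, again with the convention $0^0=1$ at the endpoints $j=0$ or $j=m$. The exponential parts therefore cancel identically in the ratio $\binom mj^2/\binom{2m}{2j}$.

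What remains is the ratio of the square-root prefactors:
\[
   \frac{(m+1)/((j+1)(m-j+1))}{\sqrt{(2m+1)/((2j+1)(2m-2j+1))}}.
\]
Since $2k+1\asymp k+1$ uniformly in $k\ge0$, this is comparable to
\[
   \frac{m+1}{(j+1)(m-j+1)}\cdot\sqrt{\frac{(j+1)(m-j+1)}{m+1}}=\sqrt{\frac{m+1}{(j+1)(m-j+1)}},
\]
which is exactly \eqref{eq:uniform-binomial-ratio}.

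The only delicate point is uniformity at the boundary, where $j$ or $m-j$ is $0$ or small. That is why I use the shifted form $\sqrt{k+1}$ of Stirling's formula rather than $\sqrt{2\pi k}$. As a check, at $j=0$ both sides are of order $1$ (the ratio equals $1$ and the right side is $\sqrt{(m+1)/(m+1)}$), and the case $m=0$ is trivial. All the implied constants come from the single uniform Stirling comparison, so they are universal.
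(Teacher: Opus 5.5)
Your proof is correct. Both arguments rest on the same input, Stirling's formula in the shifted form $k!\asymp\sqrt{k+1}\,(k/e)^k$; they differ in how the factorials are grouped before it is applied.

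The paper first records the exact identity $\binom mj^2/\binom{2m}{2j}=\binom{2j}{j}\binom{2(m-j)}{m-j}/\binom{2m}{m}$, obtained by expanding into factorials. After that, the only asymptotic input needed is the central binomial bound $\binom{2q}{q}\asymp 4^q/\sqrt{q+1}$ for $q=j,\ m-j,\ m$. The factors $4^j4^{m-j}/4^m$ cancel, and the three square roots give the claim at once, including the endpoints $j\in\{0,m\}$ with no $0^0$ convention.

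You instead use the general uniform estimate for $\binom NK$ in entropy form, applied to $(m,j)$ and to $(2m,2j)$. The $(2m,2j)$ entropy factor is exactly the square of the $(m,j)$ one, so only the square-root prefactors remain, and these you compare via $2k+1\asymp k+1$.

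What each route buys:
\begin{itemize}
\item The paper's version is more economical. It also isolates exactly the estimate that is reused in the majority proposition to get $\bigl(\binom{2m}{m}/2^{2m}\bigr)^2\gtrsim 1/(m+1)$.
\item Your version needs the $0^0$ convention and a separate check that the entropy factors cancel. In return it makes the mechanism explicit: the exponential parts match identically. It also gives a reusable two-sided estimate valid for arbitrary $0\le K\le N$.
\end{itemize}
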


\begin{proof}
The exact identity
\[
   \frac{\binom mj^2}{\binom{2m}{2j}}
   =\frac{\binom{2j}{j}\binom{2(m-j)}{m-j}}{\binom{2m}{m}}
\]
follows by expanding the three binomial coefficients into factorials.  Two-sided Stirling estimates imply that there are universal constants $c_0,C_0>0$ such that, for every integer $q\ge1$,
\[
   c_0\frac{4^q}{\sqrt q}
   \le \binom{2q}{q}
   \le C_0\frac{4^q}{\sqrt q}.
\]
After changing the constants, the same statement holds for all $q\ge0$ with $\sqrt q$ replaced by $\sqrt{q+1}$; the case $q=0$ is immediate.  Applying these bounds to $q=j$, $q=m-j$, and $q=m$, the powers of $4$ cancel and yield \eqref{eq:uniform-binomial-ratio}, including the endpoint cases $j=0$ and $j=m$.
\end{proof}

\begin{proposition}[Sharpness for majority]\label{prop:majority-sharp}
Let $n\ge3$ be odd and let $\A_n=\{x\in\Omega_n:\sum_i x_i>0\}$.  Then, for $0<\alpha<1$,
\[
   \sum_{S\ne\varnothing}|S|^\alpha\widehat{\1_{\A_n}}(S)^2
   \asymp_\alpha
   \E\omega_\alpha(h_{\A_n}(x)).
\]
Consequently the logarithmic endpoint \eqref{eq:one-sided-half} is sharp up to universal constants.
\end{proposition}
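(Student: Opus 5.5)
The upper bound is already available: Theorem \ref{thm:one-sided-moment-scale} gives $\sum_{S\ne\varnothing}|S|^\alpha\widehat{\1_{\A_n}}(S)^2\lesssim_\alpha\E\omega_\alpha(h_{\A_n})$. So the plan is to compute both sides for majority explicitly and check that the left side dominates the right.

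\emph{Right-hand side.} A point $x\in\A_n$ has $h_{\A_n}(x)>0$ only if $\sum_i x_i=1$. In that case $x$ has exactly $(n+1)/2$ coordinates equal to $+1$, and flipping any of them exits $\A_n$, so $h_{\A_n}(x)=(n+1)/2$. The critical layer has probability $\binom{n}{(n+1)/2}2^{-n}\asymp n^{-1/2}$. Hence
\[
   \E\omega_\alpha(h_{\A_n})\asymp n^{-1/2}\omega_\alpha(n),
\]
which is $\asymp 1$ for $\alpha<1/2$, $\asymp\log n$ for $\alpha=1/2$, and $\asymp n^{\alpha-1/2}$ for $\alpha>1/2$.

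\emph{Left-hand side.} Since $\1_{\A_n}=(1+\mathrm{Maj}_n)/2$, we have $\widehat{\1_{\A_n}}(S)^2=\widehat{\mathrm{Maj}_n}(S)^2/4$ for $S\ne\varnothing$. The Fourier weight of majority sits on odd levels $k=2j+1$, and the standard formula (\cite[Section~5.3]{ODonnell}) gives $W_{=k}(\mathrm{Maj}_n)$ as an explicit product of binomial coefficients. My intent is to rewrite this so that Lemma \ref{lem:uniform-binomial-ratio} applies; the lemma was evidently placed just before this proposition for that purpose. Concretely, with $n=2m+1$ and $k=2j+1$, one has
\[
   \widehat{\mathrm{Maj}_n}(S)=(-1)^j\frac{\binom{m}{j}}{\binom{n-1}{k-1}}\binom{n-1}{m}2^{-(n-1)}\cdot\frac{2}{?}.
\]
I will verify the exact normalization from the formula in \cite{ODonnell}. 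The resulting weight is $W_{=2j+1}\asymp\binom{n}{2j+1}\binom{m}{j}^2\binom{2m}{2j}^{-2}\cdot\binom{2m}{m}^24^{-2m}$. Using $\binom{n}{2j+1}=\frac{n}{2j+1}\binom{2m}{2j}$, this becomes
\[
   \frac{n}{2j+1}\cdot\frac{\binom mj^2}{\binom{2m}{2j}}\cdot\frac1m .
\]
Lemma \ref{lem:uniform-binomial-ratio} then gives the uniform estimate
\[
   W_{=2j+1}\asymp\frac{1}{j+1}\sqrt{\frac{m+1}{(j+1)(m-j+1)}},
\]
valid for all $0\le j\le m$, including the top end. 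The main obstacle is getting this normalization exactly right so that the bound is uniform in $j$.

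\emph{Summation.} I will restrict the sum to $j\le m/2$, where $W_{=2j+1}\asymp (j+1)^{-3/2}$. Then
\[
   \sum_{k}k^\alpha W_{=k}\gtrsim\sum_{j\le m/2}(j+1)^{\alpha-3/2},
\]
which is $\gtrsim_\alpha 1$ for $\alpha<1/2$, $\asymp\log n$ for $\alpha=1/2$, and $\asymp_\alpha n^{\alpha-1/2}$ for $\alpha>1/2$. For $\alpha<1/2$, the $j=0$ term alone already gives $\gtrsim1$. This matches the right-hand side in every regime, which proves the lower bound. Combined with Theorem \ref{thm:one-sided-moment-scale}, this establishes the equivalence $\asymp_\alpha$.

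In particular, at $\alpha=1/2$ both sides are $\asymp\log n$. If the $\log(e+h)$ factor in \eqref{eq:one-sided-half} could be removed, the right side would be $O(1)$, contradicting the $\log n$ growth of the left side. Hence the endpoint is sharp.
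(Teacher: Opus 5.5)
Your proposal is correct and follows the paper's proof essentially step for step. The steps are: the critical-layer computation of $\E\omega_\alpha(h_{\A_n})\asymp n^{-1/2}\omega_\alpha(n)$; the exact majority formula rewritten via $\binom{2m+1}{2j+1}=\frac{2m+1}{2j+1}\binom{2m}{2j}$, so that Lemma~\ref{lem:uniform-binomial-ratio} together with $\bigl(\binom{2m}{m}4^{-m}\bigr)^2\asymp 1/(m+1)$ gives $W_{=2j+1}\gtrsim (j+1)^{-3/2}$ for $j\le m/2$; summation over those levels; and Theorem~\ref{thm:one-sided-moment-scale} for the reverse inequality. The unresolved factor $\frac{2}{?}$ in your displayed coefficient is harmless: the exact coefficient is $(-1)^j\binom{2m}{m}4^{-m}\binom{m}{j}/\binom{2m}{2j}$, with no extra factor, and in any case only the $j$-dependence matters, which your weight formula already has right.
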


\begin{proof}
The boundary of $\A_n$ is supported on the positive critical layer
\[
   \#\{i:x_i=1\}=\frac{n+1}{2}.
\]
On this layer,
\[
   h_{\A_n}(x)=\frac{n+1}{2},
\]
and the layer has measure comparable to $n^{-1/2}$ by the standard central-binomial estimates.  Hence
\begin{equation}\label{eq:majority-boundary-scale}
   \E\omega_\alpha(h_{\A_n})
   \asymp_\alpha
   \begin{cases}
      1, & 0<\alpha<1/2,\\
      \log n, & \alpha=1/2,\\
      n^{\alpha-1/2}, & 1/2<\alpha<1.
   \end{cases}
\end{equation}

It remains to obtain a finite-dimensional lower bound for the Fourier moment.  Write $n=2m+1$.  Majority is odd under the global sign change $x\mapsto-x$; since $\chi_S(-x)=(-1)^{|S|}\chi_S(x)$, every positive even Fourier level of majority vanishes.  By permutation symmetry, all Fourier coefficients on a fixed odd level agree.  For $|S|=2j+1$, the exact majority formula is \cite[Section~5.3]{ODonnell}
\begin{equation}\label{eq:majority-exact-coefficient}
   \widehat{\mathrm{Maj}_n}(S)
   =(-1)^j
     \frac{\binom{2m}{m}}{2^{2m}}
     \frac{\binom{m}{j}}{\binom{2m}{2j}},
   \qquad 0\le j\le m.
\end{equation}
Consequently,
\begin{equation}\label{eq:majority-level-formula}
\begin{aligned}
   W_{=2j+1}[\mathrm{Maj}_n]
   &=\binom{2m+1}{2j+1}
     \left(\frac{\binom{2m}{m}}{2^{2m}}
     \frac{\binom{m}{j}}{\binom{2m}{2j}}\right)^2\\
   &=\frac{2m+1}{2j+1}
     \left(\frac{\binom{2m}{m}}{2^{2m}}\right)^2
     \frac{\binom{m}{j}^2}{\binom{2m}{2j}}.
\end{aligned}
\end{equation}
The central-binomial estimate used in Lemma \ref{lem:uniform-binomial-ratio} gives
\[
   \left(\frac{\binom{2m}{m}}{2^{2m}}\right)^2
   \gtrsim \frac1{m+1}.
\]
Combining this with Lemma \ref{lem:uniform-binomial-ratio} in \eqref{eq:majority-level-formula},
\[
\begin{aligned}
   W_{=2j+1}[\mathrm{Maj}_n]
   &\gtrsim
   \frac{2m+1}{(2j+1)(m+1)}
   \sqrt{\frac{m+1}{(j+1)(m-j+1)}}\\
   &\gtrsim
   \frac1{(2j+1)\sqrt{j+1}}
   \gtrsim (2j+1)^{-3/2}.
\end{aligned}
\]
Thus
\begin{equation}\label{eq:majority-uniform-lower}
   W_{=k}[\mathrm{Maj}_n]\gtrsim k^{-3/2}
   \qquad\text{for every odd }1\le k\le n.
\end{equation}
Since $\1_{\A_n}=(1+\mathrm{Maj}_n)/2$, its nonconstant level weights are one quarter of those of $\mathrm{Maj}_n$.  Summing \eqref{eq:majority-uniform-lower} over odd $k\le n/2$ yields
\[
   \sum_{S\ne\varnothing}|S|^\alpha
      \widehat{\1_{\A_n}}(S)^2
   \gtrsim_\alpha
   \begin{cases}
      1, & 0<\alpha<1/2,\\
      \log n, & \alpha=1/2,\\
      n^{\alpha-1/2}, & 1/2<\alpha<1.
   \end{cases}
\]
The reverse inequality follows directly from Theorem \ref{thm:one-sided-moment-scale} and \eqref{eq:majority-boundary-scale}.  This proves the proposition.
\end{proof}

\begin{remark}[Relation to Majority Is Stablest]\label{rem:majority-is-stablest}
For a balanced function $g:\Omega_n\to\{-1,1\}$, write
\[
   \Stab_\rho(g)=\sum_{S\subseteq[n]}\rho^{|S|}\widehat g(S)^2.
\]
By \eqref{eq:power function rep}, after the change of variables $\rho=e^{-t}$,
\begin{equation}\label{eq:fractional-stability-integral}
   \sum_{S\ne\varnothing}|S|^\alpha\widehat g(S)^2
   =c_\alpha\int_0^1
   \frac{1-\Stab_\rho(g)}{\rho(-\log\rho)^{1+\alpha}}\,\dd\rho,
   \qquad
   c_\alpha=\frac{\alpha}{\Gamma(1-\alpha)}.
\end{equation}
Majority Is Stablest \cite[Theorem~4.4]{MOO} says that, for every fixed
$0<\rho<1$, balanced Boolean functions with vanishing maximal influence have
noise stability at most
\[
   \frac{2}{\pi}\arcsin\rho+o(1),
\]
the limiting stability of majority.  Since
\[
   1-\frac{2}{\pi}\arcsin\rho\asymp\sqrt{1-\rho}
   \qquad(\rho\uparrow1),
\]
the Gaussian expression obtained by inserting this bound into
\eqref{eq:fractional-stability-integral} is finite exactly when
$\alpha<1/2$.  Thus, after truncating the integral away from $\rho=1$ and
passing to the low-influence limit, Majority Is Stablest explains the
constant-order obstruction, and hence the $\sqrt m$ branch of
$\omega_\alpha$, for $\alpha<1/2$.  At and above the critical exponent it
detects divergence, but the usual fixed-$\rho$ theorem does not by itself
supply the sharp finite-dimensional rates $\log n$ and
$n^{\alpha-1/2}$; those require control down to the scale
$1-\rho\asymp n^{-1}$, as in Proposition \ref{prop:majority-sharp}.
\end{remark}

\section{Relation to random restrictions}\label{sec:random-restrictions}

The proof of Eldan--Kindler--Lifshitz--Minzer \cite[Sections~2.2 and~3.4]{EKLM} is based on random restrictions.  Their method first proves a level-one estimate for a boundary functional and then shows that random restrictions turn high-degree Fourier mass into level-one Fourier mass.  For edge-local quantities, the boundary of a restriction is obtained by deleting frozen coordinates, and this gives the required contraction.

For the Beckmann boundary, the level-one estimate is immediate.  If $\divg \V=Lf$, then, for every $i$,
\[
\begin{aligned}
   \widehat f(\{i\})
   &=\ip{f}{\chi_i}
     =\ip{Lf}{\chi_i}
     =\ip{\divg \V}{\chi_i} \\
   &=2\sum_{j=1}^n\ip{\V_j}{D_j\chi_i}
     =2\ip{\V_i}{\chi_i}.
\end{aligned}
\]
Here we used $L\chi_i=\chi_i$ and $D_j\chi_i=\1_{\{i=j\}}\chi_i$.  Therefore, by the vector-valued Jensen inequality,
\[
\begin{aligned}
   \sqrt{W_{=1}[f]}
   &=2\left\|\E_x\big(\V_i(x)\chi_i(x)\big)_{i=1}^n\right\|_2\\
   &\le2\E_x\left\|\big(\V_i(x)\chi_i(x)\big)_{i=1}^n\right\|_2
     =2\E\|\V(x)\|_2.
\end{aligned}
\]
Taking the infimum over all admissible fields gives
\[
   \B(f)\ge\frac12\sqrt{W_{=1}[f]}.
\]

The missing random-restriction step would be a Hodge-type contraction estimate of the form
\[
   \E_R\B(f_R)\lesssim \sqrt p\,\B(f).
\]
Here $R=(J,z)$ is sampled as follows.  Each coordinate is retained in $J\subseteq[n]$ independently with probability $p$, and, conditionally on $J$, the vector $z\in\{-1,1\}^{[n]\setminus J}$ is uniform and independent.  The restricted function $f_R$ is the Boolean function on $\Omega_J=\{-1,1\}^J$ obtained by setting the coordinates outside $J$ equal to $z$, and $\E_R$ denotes expectation over both $J$ and $z$.

Unlike the corresponding estimate for edge-local colored boundaries, the displayed contraction does not follow by deleting coordinates from an optimal field.  Indeed, if $\V$ satisfies $\divg \V=Lf$ and one restricts to a slice, then the divergence of the components in the alive directions equals the restricted alive-coordinate Laplacian only after subtracting the divergence contributed by the frozen directions.  That latter term is generally nonzero and becomes an additional source on the slice.  Thus the alive part of $\V$ need not be feasible for $f_R$.  The semigroup proof avoids this obstruction: it tests the full divergence equation directly against $P_s f$ and never restricts the vector field.

This is the sense in which Theorem \ref{thm:main} strengthens the robust theorem of Eldan--Kindler--Lifshitz--Minzer by a semigroup method.  Their colored boundary is edge-local; the Beckmann boundary is nonlocal and smaller, yet it satisfies the same Talagrand-type lower bound.

\appendix

\section{Standard analytic and duality ingredients}\label{app:standard}

\subsection{Heat-kernel identities and reverse Poincar\'e}\label{app:heat-kernel}

\begin{proof}[Proof of Lemma \ref{lem:ivhv-identity}]
Write $\rho=e^{-t}$ and fix $x$.  In the noise representation
\[
   P_tg(x)=\E_\varepsilon g(x\varepsilon),
\]
the coordinates of $\varepsilon$ are independent and satisfy
\[
   \Pr[\varepsilon_i=1]=\frac{1+\rho}{2},
   \qquad
   \Pr[\varepsilon_i=-1]=\frac{1-\rho}{2}.
\]
Fix all noise variables except $\varepsilon_i$, and abbreviate
\[
   a=g(x_1\varepsilon_1,\ldots,x_i,\ldots,x_n\varepsilon_n),
   \qquad
   b=g(x_1\varepsilon_1,\ldots,-x_i,\ldots,x_n\varepsilon_n).
\]
Conditioning on $\varepsilon_{\ne i}$ gives
\[
   P_tg(x)-P_tg(x^{\oplus i})
   =\rho\,\E_{\varepsilon_{\ne i}}(a-b),
\]
and hence
\begin{equation}\label{eq:appendix-DPt-first}
   D_iP_tg(x)=\frac\rho2\E_{\varepsilon_{\ne i}}(a-b).
\end{equation}
On the other hand, a direct two-point computation gives
\[
   \E_{\varepsilon_i}\big[g(x\varepsilon)(\varepsilon_i-\rho)\big]
   =\frac{1-\rho^2}{2}(a-b).
\]
Since $\delta_i(t)=(\varepsilon_i-\rho)/\sqrt{1-\rho^2}$,
\[
   \E_\varepsilon[\delta_i(t)g(x\varepsilon)]
   =\frac{\sqrt{1-\rho^2}}2
      \E_{\varepsilon_{\ne i}}(a-b).
\]
Combining this with \eqref{eq:appendix-DPt-first} and using
\[
   \frac\rho{\sqrt{1-\rho^2}}
   =\frac1{\sqrt{e^{2t}-1}}
\]
proves \eqref{eq:ivhv-identity}.
\end{proof}

We next give a proof of Lemma \ref{lem:reverse-Poincar\'e}. Another semigroup proof can be found in \cite[Section 6]{ILvHV}.

\begin{proof}[Proof of Lemma \ref{lem:reverse-Poincar\'e}]
Write $\rho=e^{-t}$ and use the same noise representation as above.  Fix $x$ and set
\[
   G(\varepsilon)=g(x\varepsilon),
   \qquad
   \delta_i=\frac{\varepsilon_i-\rho}{\sqrt{1-\rho^2}}.
\]
The variables $\delta_1,\ldots,\delta_n$ are orthonormal in the noise
$L^2$ space and are all orthogonal to constants.  The calculation in the proof of Lemma \ref{lem:ivhv-identity} gives
\begin{equation}\label{eq:coef-id}
   D_iP_tg(x)=\frac\rho{\sqrt{1-\rho^2}}\E[G\delta_i].
\end{equation}
Because $\E\delta_i=0$,
\[
   \E[G\delta_i]=\E[(G-\E G)\delta_i].
\]
Applying Bessel's inequality to $G-\E G$ and the orthonormal system
$\delta_1,\ldots,\delta_n$ therefore gives
\[
   \sum_{i=1}^n(\E[G\delta_i])^2
   \le\E[(G-\E G)^2]
   =P_t(g^2)(x)-(P_tg(x))^2.
\]
Using \eqref{eq:coef-id} and
$\rho^2/(1-\rho^2)=1/(e^{2t}-1)$ proves
\eqref{eq:reverse-Poincar\'e}.  The $L^\infty$ consequence follows as in the first proof.
\end{proof}

\subsection{Finite-dimensional Beckmann duality}\label{app:duality}

\begin{proof}[Proof of Proposition \ref{prop:dual}]
Let $X$ be the finite-dimensional space of vector fields on $\Omega_n$, equipped with
\[
   \ip{\V}{W}_X=\E\sum_{i=1}^n\V_i(x)W_i(x),
\]
and let $Y$ be the space of real-valued functions on $\Omega_n$, equipped with
$\ip{g}{h}_Y=\E[gh]$.  Let $A:X\to Y$ be the divergence map.  For every
$\V\in X$ and $\phi\in Y$, changing variables $x\mapsto x^{\oplus i}$ gives
\[
\begin{aligned}
   \ip{A\V}{\phi}_Y
   &=\sum_{i=1}^n\E[(\V_i(x)-\V_i(x^{\oplus i}))\phi(x)]\\
   &=\sum_{i=1}^n\E[\V_i(x)(\phi(x)-\phi(x^{\oplus i}))]\\
   &=2\E\sum_{i=1}^n\V_i(x)D_i\phi(x).
\end{aligned}
\]
Thus the adjoint of $A$ is
\begin{equation}\label{eq:divergence-adjoint}
   A^*\phi=2D\phi.
\end{equation}

Define
\[
   J(\V)=\E\|\V(x)\|_2.
\]
The primal problem is
\[
   \inf\{J(\V):A\V=Lf\}.
\]
The functional $J$ is finite and continuous on all of the finite-dimensional
space $X$, and the affine constraint is feasible; for example,
$\V_i=f/2$ satisfies $A\V=Lf$.  This is the standard continuity qualification in the finite-dimensional Fenchel--Rockafellar theorem~\cite[Theorem~31.2]{Rockafellar}, so there is no duality gap.

For completeness, introduce a multiplier $\phi\in Y$ and form the Lagrangian
\[
   J(\V)+\ip{\phi}{Lf-A\V}_Y.
\]
Using \eqref{eq:divergence-adjoint}, the infimum over $\V$ separates over vertices.  At a fixed $x$ it is, up to the common positive factor $2^{-n}$,
\[
   \inf_{v\in\R^n}\big\{\|v\|_2-2v\cdot D\phi(x)\big\}.
\]
By the duality between the Euclidean norm and itself, this infimum is $0$ if
$2\|D\phi(x)\|_2\le1$ and is $-\infty$ otherwise.  Hence the dual problem is
\[
   \sup\left\{
      \E[\phi Lf]:2\|D\phi(x)\|_2\le1
      \text{ for every }x\in\Omega_n
   \right\}.
\]
Strong duality identifies this supremum with the primal value $\B(f)$, proving
\eqref{eq:dual}.
\end{proof}

\end{document}